\documentclass[reqno,11pt, a4paper]{amsart}
\usepackage[utf8]{inputenc}
\usepackage[T2A]{fontenc}
\usepackage[russian,english]{babel}
\usepackage{xcolor}
\usepackage[colorlinks,citecolor=green!50!black,linkcolor=blue]{hyperref}
\usepackage{amssymb,amsfonts,amsmath}
\usepackage[margin=2.54cm,includeheadfoot]{geometry}
\usepackage{mathrsfs}
\usepackage{mathtools}
\mathtoolsset{showonlyrefs}
\usepackage{tikz}
\usepackage{tikz-3dplot}
\usetikzlibrary{math}
\usepackage{pgfplots}
\pgfplotsset{compat=1.11}
\usetikzlibrary{arrows.meta}
\usetikzlibrary{calc}
\usetikzlibrary{decorations.markings}

\usepackage{wrapfig}

\usepackage[color,leftbars]{changebar}

\let\div\relax
\DeclareMathOperator{\div}{div}
\DeclareMathOperator{\dive}{div}

\DeclareMathOperator{\sign}{sign}
\DeclareMathOperator{\supp}{supp}

\let\d\relax
\newcommand{\d}{\partial}
\newcommand{\eps}{\varepsilon}

\newcommand{\yu}{\boldsymbol{u}}
\newcommand{\bi}{\boldsymbol{b}}
\newcommand{\Bi}{\boldsymbol{B}}

\let\v\relax
\newcommand{\v}{\boldsymbol{v}}
\newcommand{\vi}{\boldsymbol{v}}
\newcommand{\vv}{\boldsymbol{w}}

\newcommand{\R}{\mathbb{R}}

\newcommand{\N}{\mathbb{N}}
\newcommand{\Z}{\mathbb{Z}}

\newcommand{\fhi}{\varphi}
\newcommand{\1}{\mathbf{1}}
\renewcommand{\L}{\mathscr{L}}
\newcommand{\Lip}{\operatorname{\mathrm{Lip}}}

\renewcommand{\H}{\mathscr{H}}
\DeclareMathOperator{\rest}{\llcorner}
\newcommand{\loc}{\text{\rm loc}}

\newtheorem{theorem}{Theorem}[section]
\newtheorem{proposition}[theorem]{Proposition}
\newtheorem{lemma}[theorem]{Lemma}
\newtheorem{conjecture}[theorem]{Conjecture}
\newtheorem{corollary}[theorem]{Corollary}
\newtheorem{definition}[theorem]{Definition}

\theoremstyle{definition}
\newtheorem{remark}[theorem]{Remark}

\title{On flows generated by square-integrable vector fields}

\author{N.A. Gusev}
\address[N.G.]{Moscow Institute of Physics and Technology,
	9 Institutskiy per., Dolgoprudny, Moscow Region, Russia 141700
}
\email{ngusev@phystech.su; ngusev@phystech.edu; n.a.gusev@gmail.com}
\definecolor{royalazure}{rgb}{0.0, 0.22, 0.66}
\definecolor{green}{rgb}{0,0.5,0}

\author{M.V. Korobkov}
\address[M.K.]{School of Mathematical Sciences,
Fudan University,
Shanghai 200433,
People’s Republic of China}
\address[M.K.]{Sobolev Institute of Mathematics, Acad. Koptyug pr. 4, Novosibirsk, Russia.}
\email{korob@math.nsc.ru}

\author{E.Yu.~Panov}
\address[E.P.]{%
	St. Petersburg Department of V.A. Steklov Institute of Mathematics,
	27 Fontanka,  St. Petersburg, Russia 191023%
}
\address[E.P.]{%
	Yaroslav-the-Wise Novgorod State University,
	41 B. St.-Petersburgskaya st., Veliky Novgorod, Russia 173003%
}
\email{evpanov@yandex.ru}

\author{K.Yu. Zamana}
\address[K.Z.]{Moscow Institute of Physics and Technology,
	9 Institutskiy per., Dolgoprudny, Moscow Region, Russia 141700
}
\email{zamana.kyu@phystech.edu}

\begin{document}
\date{\today}
\allowdisplaybreaks

\begin{abstract}
For square-integrable divergence-free vector field $\v$ on $\R^d$ we prove that the following properties are equivalent:
\begin{enumerate}
	\item the operator $A_0 \rho = \v \cdot \nabla \rho$ (where $\rho \in C^\infty_c(\R^d)$) is essentially skew-adjoint on $L^2(\R^d)$;
	\item square-integrable (with respect to spatial variables) generalized solutions of the continuity equation are renormalized;
	\item generalized square-integrable (with respect to spatial variables) solutions of the Cauchy problem for the corresponding continuity equation are unique both forward an backward in time.
\end{enumerate}
We also construct a compactly supported bounded divergence-free vector field $\vi\colon \R^3 \to \R^3$ for which square-integrable (with respect to spatial variables) solutions of the Cauchy problem for the corresponding continuity equation are unique forward, but not backward in time.
\end{abstract}

\maketitle
\tableofcontents

\section{Introduction}

Let $\v \in \boldsymbol{L}^2(\R^d)$ be a compactly supported vector field, which is divergence-free (in the sense of distributions).
For any $\rho \in C_c^\infty(\R^d)$ let
\begin{equation}\label{differentiation-along-v}
	A_0 \rho := \v \cdot \nabla \rho.
\end{equation}
Clearly $A_0$ is a densely defined linear operator on $L^2(\R^d)$ with the domain $D(A_0) = C_c^\infty(\R^d)$.
In \cite{Nelson_1963} (cf. \cite{Aizenman_1978}) the following conjecture was
proposed:
\begin{conjecture}\label{Nelson}
The operator $A_0$ is essentially skew-adjoint.
\end{conjecture}

One of the observations, on which Conjecture~\ref{Nelson} was based originally, is the following one: if the operator $A_0$ is essentially skew-adjoint, then there exists at most one measure-preserving flow of $\vi$. This property was stated (without a proof) in \cite[p. 288]{Aizenman_1978}. For completeness we prove it in the Appendix~\ref{a:uniqueness-of-mpf}, see Proposition~\ref{unique-flow}. 

We study the connection between the essential skew-adjointness
of the operator $A_0$ and well-posedness of the initial value problem
for the continuity equation with the vector field $\vi$:
\begin{equation}
\label{CE}
\d_t \rho + \dive (\rho \v) = 0.
\end{equation}
Existence and uniqueness of solutions of the initial value problem for the continuity equation is closely related to existence and uniqueness of flow of $\vi$, see e.g. \cite{DiPernaLions1989}, \cite{AmbrosioCrippa2008}, \cite{Gusev2018} and other papers.

Our main result is the following:
\begin{theorem}\label{main}
Let $\v \in \boldsymbol{L}^2(\R^d)$ be a compactly supported divergence-free vector field.
Let $I \subset \R$ be a nonempty bounded open interval.
Then the following three properties are equivalent:
\begin{enumerate}
	\item[(i)] the operator $A_0$ given by \eqref{differentiation-along-v} is essentially skew-adjoint;
	\item[(ii)] $\v$ has the renormalization property in the class $\rho \in L^\infty(I; L^2(\R^d))$;
	\item[(iii)] for any $\rho_0 \in L^2(\R^d)$ and any $s\in I$
	there exists a unique solution $\rho\in L^\infty(I; L^2(\R^d))$
	of \eqref{CE} such that $\rho|_s = \rho_0$.
\end{enumerate}
\end{theorem}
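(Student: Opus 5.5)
We plan to prove the equivalences through the maximal operator $A := -A_0^*$. Since $\dive \v = 0$, $A_0$ is skew-symmetric on $C_c^\infty$, so $\bar A_0 \subset A$, and $A$ acts distributionally:
\begin{equation}
D(A)=\{\rho\in L^2 : \dive(\rho\v)\in L^2\},\qquad A\rho = \dive(\rho\v) = \v\cdot\nabla\rho .
\end{equation}
Recall the von Neumann criterion. The operator $A_0$ is essentially skew-adjoint iff $\ker(A_0^*\mp 1)=\{0\}$, i.e. iff there is no nonzero $\rho\in L^2$ with $\dive(\rho \v)=\pm\rho$ in $\mathcal D'$. We also use the fact that, for a weak solution $\rho\in L^\infty(I;L^2)$ of \eqref{CE}, the equation says exactly that $\rho(t)\in D(A)$ for a.e. $t$ (in the integrated sense), with $\d_t\rho = -A\rho$.

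\textbf{(iii)$\Rightarrow$(i).} Suppose $\dive(\rho\v)=\lambda\rho$ with $\lambda=\pm1$ and $\rho\neq0$. Then $r(t,x)=e^{-\lambda t}\rho(x)$ solves \eqref{CE} in $L^\infty(I;L^2)$, since $I$ is bounded. Because the equation is linear, uniqueness for the datum $0$ is what we must contradict. The solution $r$ itself never vanishes, so we need a second solution with the same datum. Our first step is to show that (iii) forces $\|\rho(t)\|_{L^2}$ to be constant. The solution map $S(t,s)$ is linear and defined on all of $L^2$, and the backward problem is also uniquely solvable. Hence $S(t,s)S(s,t)=\mathrm{id}$, and $\{S\}$ forms a group. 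For $\rho_0\in C_c^\infty$ we have $\frac{d}{dt}\langle S\rho_0, S\phi_0\rangle = \langle A\cdot,\cdot\rangle + \langle\cdot, A\cdot\rangle$, which vanishes whenever one factor stays in $D(\bar A_0)$. This is the delicate point. Instead, we will appeal to closed graph and uniform boundedness (a bounded interval plus $L^\infty_t$ bounds) to get a strongly continuous group. Its generator $G$ extends $-\bar A_0$ and is contained in $-A$. Uniqueness of solutions for $\dot u=-Au$ forces $G=-A$. Finally $G^*$ generates the adjoint group, which solves the same equation, so $G^*=-G$ and $A$ is skew-adjoint. Then $A^*=-A$ gives $\bar A_0 = -A^{*} = A$, which is (i).

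\textbf{(i)$\Rightarrow$(ii),(iii).} If $\bar A_0 = A$ is skew-adjoint, Stone's theorem gives the unitary group $e^{-tA}$, which proves existence. For uniqueness and renormalization, take a weak solution $\rho$ and mollify in time only: $\rho_\eps = \rho *_t \eta_\eps$. Then $\rho_\eps\in W^{1,\infty}(I';L^2)$ with $\rho_\eps(t)\in D(A)=D(\bar A_0)$. For $\beta$ Lipschitz with $\beta(0)=0$ we need $\dive(\beta(u)\v)=\beta'(u)\dive(u\v)$ for all $u\in D(\bar A_0)$. This holds for $u\in C_c^\infty$ and extends by graph-norm approximation: take $u_n\to u$ in $L^2$ with $\v\cdot\nabla u_n\to Au$, pass to a subsequence converging a.e., and use dominated convergence in the distributional form. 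The case of $\beta\in C^1$ with bounded derivative suffices, and it handles the product with $\beta'$. This yields the renormalized equation for $\rho_\eps$ up to a commutator in time. Since no space mollification is involved, that commutator vanishes as $\eps\to0$ by standard Lebesgue-point arguments, which gives (ii). Uniqueness then follows from (ii) applied with $\beta(u)=u^2$, after truncation to keep it Lipschitz. Integrating the renormalized equation against $1$ (with a cutoff in $x$ and $\v\in L^2$, $\beta(\rho)\le C|\rho|$ after truncation, so the cutoff error vanishes) shows that $\|\rho(t)\|_2$ is constant. Hence the datum $0$ gives the zero solution.

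\textbf{(ii)$\Rightarrow$(i).} Let $\dive(\rho\v)=\lambda\rho$ and set $r = e^{-\lambda t}\rho$ as before. By (ii), $|r|$ (or a truncation $\beta(r)$) is also a solution. Testing against $\chi_R(x)$ and letting $R\to\infty$ gives $\frac{d}{dt}\int\beta(r)\,dx=0$. Choosing $\beta(u)=\min(|u|,1)$-type truncations and using $\v\in L^2$, $\rho\in L^2$ so that $\int |\rho\v\cdot\nabla\chi_R|\to0$, we obtain $\int \beta(e^{-\lambda t}\rho)\,dx$ constant in $t$. Monotone dependence on $t$ then forces $\rho=0$. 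This gives (i). Combined with the previous step, all three properties are equivalent.

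The main obstacle is the functional-analytic step in (iii)$\Rightarrow$(i): building a strongly continuous group from abstract uniqueness and existence and identifying its generator. As a fallback we can instead show directly that (iii)$\Rightarrow$(ii), reusing the argument of the last step.
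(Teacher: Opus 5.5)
\textbf{There is a genuine gap in (iii)$\Rightarrow$(i).} You flag this step yourself, and the sketch does not close it.
\begin{itemize}
\item The inclusion $-\overline{A_0}\subset G$ is unjustified at that stage. Proposition~\ref{adjoint-operator-extends-generator} gives $G\subset A_0^*$, hence only $\overline{A_0}\subset G^*$. Turning this into $-\overline{A_0}\subset G$ needs $G^*\subset -G$, which is what you are trying to prove.
\item The claim that uniqueness forces $G=A_0^*$ is true but needs an argument. Take real $\lambda$ in the resolvent set of $G$ and $u_0\in D(A_0^*)$. The vector $z=(\lambda-G)^{-1}(\lambda-A_0^*)u_0-u_0$ satisfies $A_0^*z=\lambda z$, so $e^{\lambda t}z$ is a solution. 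Uniqueness gives $S_tz=e^{\lambda t}z$, hence $Gz=\lambda z$ and $z=0$.
\item The last step is wrong as written. If $S_t^*$ solved the same equation, uniqueness would give $S_t^*=S_t$, i.e.\ $G^*=G$, not $G^*=-G$. What holds, once $G=A_0^*$ is known, is $G^*=\overline{A_0}\subset -A_0^*$. So $S_t^*$ solves the time-reversed equation, and \emph{backward} uniqueness gives $S_t^*=S_{-t}$.
\item Your fallback, ``(iii)$\Rightarrow$(ii) by the argument of the last step'', is circular, since that argument starts from the renormalization property.
\end{itemize}

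\textbf{The missing idea is simpler and is already in the paper.} The solution built in Theorem~\ref{existence-of-weak-solutions} is a weak limit of data transported by measure-preserving smooth flows. It therefore satisfies $\|\tilde\rho(t)\|\le\|\rho_0\|$ for all $t\in I$, on both sides of the interior time $s$. Under (iii) it is the only solution. Apply this to your $r(t)=e^{-\lambda t}\rho$, where $\dive(\rho\v)=\lambda\rho$ and $\lambda=\pm1$, with datum $r|_s$. You get $e^{-\lambda t}\|\rho\|\le e^{-\lambda s}\|\rho\|$ for a.e.\ $t\in I$, which fails on one side of $s$ unless $\rho=0$. By von Neumann's criterion this is (i). The same bound gives norm conservation in Theorem~\ref{uniqueness-implies-strong-continuity}. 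The paper then combines that with Proposition~\ref{adjoint-operator-extends-generator} and a deficiency-index argument, which your eigenfunction test would shortcut.

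\textbf{The rest of your plan is sound and partly simpler than the paper.}
\begin{itemize}
\item Since $\v$ is autonomous, $\rho*_t\eta_\eps$ is an exact solution on slightly smaller intervals, with values in $D(A_0^*)=D(\overline{A_0})$ at every $t$. So Proposition~\ref{chain-rule-for-skew-adjoint} at fixed $t$ gives (i)$\Rightarrow$(ii) without Proposition~\ref{skew-adjoint-extra-dimension}; there is no commutator at all.
\item Your direct (ii)$\Rightarrow$(i) works with a renormalizer strictly increasing in $|u|$, e.g.\ $u^2/(1+u^2)$.
\item In the uniqueness step, constancy of $\int\beta(\rho(t))\,dx$ does not by itself use the datum. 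As in Theorem~\ref{renormalized-solutions}, you must renormalize $\rho$ extended by zero on one side of $s$, which is a solution precisely because $\rho|_s=0$.
\end{itemize}
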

(The sense in which the condition $\rho|_s = \rho_0$ is understood is discussed in Section~\ref{sec:notation}. The implications (i)$\to$(ii), (ii)$\to$(iii) and (iii)$\to$(i) of Theorem~\ref{main} are proved in Sections~\ref{sec:skadj-renorm}, \ref{section-renorm-prop} and \ref{sec:uniq-skadj} respectively. In fact the implication (ii)$\to$(iii) is well-known, but we include its proof for completeness.)

Theorem~\ref{main}
extends the results obtained in
\cite{Panov2015,Panov2018} for the continuity equation with bounded vector field (see also \cite{BouchutCrippa2006}).
Recently some of these results were generalized to the case of arbitrary densely defined skew-symmetric operator $A_0$ on a Hilbert space, see \cite{Panov2025,Panov2026}.

It is interesting to note that in the special case $d=2$ uniqueness of solutions of the Cauchy problem for the continuity equation forward in time is equivalent to uniqueness backward in time (see Theorem~\ref{uniqueness-forward-and-backward}). 
However for $d=3$ we construct an autonomous compactly supported bounded divergence-free vector field which demonstrates that in general uniqueness forward in time is not equivalent to uniqueness backward in time.
An example of non-autonomous vector field with such properties was given in \cite[Remark 3.6]{BouchutCrippa2006}.
We refer to Section~\ref{unequivalence} for more precise statements of these results. 

\section{Notation and preliminaries}\label{sec:notation}
Let $|\cdot|$ denote the Euclidean norm on $\R^d$, $d\in \N$.
For any $x\in \R^d$ and any $r>0$ let $B_r(x):= \{y\in \R^d : |y-x|<r\}$
denote the open ball with the radius $r$.
Let $\|\cdot\|_p$ denote the standard $L^p$ norm; in particular, let $\|\cdot\| := \|\cdot\|_2$. Given a set $E$, let $\1_E$ denote the indicator of the set $E$.
For any $p \in [1,+\infty]$ and any measurable set $E \subset \R^d$
we will denote with $L^p(E)$ the set of all measurable functions $f\colon E\to \R$ with $\|f\|_p < +\infty$.

For any $p,q\in[1,+\infty]$ and any (open or closed) interval $I\subset \R$ let $L^q(I; L^p(\R^d))$ denote the set
of all measurable functions $f\colon I\times \R^d \to \R$ such that
the function $t\mapsto \int_{\R^d} \|f(t, \cdot)\|_p \, dt$ belongs to $L^q(I)$.
Let $C_w(I; L^2(\R^d))$ denote the space of weakly continuous functions $f\colon I \to L^2(\R^d)$, i.e. $f\in C_w(I; L^2(\R^d))$ if and only if for any $\fhi \in L^2(\R^d)$ the function $t\mapsto \int_{\R^d} f(t) \fhi \, dx$ belongs to $C(I)$. 

Given a time-dependent vector field $\v\colon I\times \R^d \to \R^d$
from the class $\boldsymbol{L}^1_\loc(I\times \R^d)$
let $\dive \v \equiv \dive_x \v$ denote the (distributional) divergence of $\v$ with respect to
the spacial coordinates $x\in \R^d$. Similarly, for a time-dependent
scalar field $\rho \colon I \times \R^d \to \R$
we will denote with $\nabla \rho \equiv \nabla_x \rho$ the gradient of $\rho$
with respect to the spacial variables $x\in \R^d$.

Given a Borel map $f\colon A \to B$, where $A \subset \R^n$ and $B\subset \R^m$
($m,n\in \N$) are some Borel sets, for any Borel measure $\mu$ on $A$
let $f_\# \mu$ denote the image of the measure $\mu$ under the map $f$,
i.e. the measure defined by $(f_\# \mu)(E) = \mu(f^{-1}(E))$
for all Borel sets $E \subset B$.

Let $\L^d$ denote the Lebesgue measure on $\R^d$.
For measurable sets $E\subset \R^d$ we will also
denote $|E| = \L^d(E)$.

Given a nonempty open interval $I \subset \R$, $n\in \N$ and a normed space $(X, \|\cdot\|)$ let $C_c^n(I; X)$ (resp. $C_c(I; X)$) denote the set of all compactly supported functions $f\in C^n(I; X)$ (resp. $f\in C(I; X)$)
endowed with the norm of $C^n(I; X)$ (resp. $C(I;X)$).
%TODO: finite norm
As usual, $C^\infty(I; X) = \bigcap_{n=1}^\infty C^n(I; X)$
and $C^\infty_c(I; X)$ is the set of all functions from $C^\infty(I; X)$ which have compact support.

\begin{lemma}\label{dense-subspace}
	Suppose that $X_0$ is a dense linear subspace of a normed space $(X, \|\cdot\|_X)$.
	Then for any $u\in C^1_c(\R; X)$ with $\supp u \subset (a,b)$ and for any $\eps >0$
	there exists a function 
	$v$ of the form
	\begin{equation*}
		v(t) = \sum_{k=1}^n \psi_k(t) v_k, 
	\end{equation*}
	where $v_k \in X_0$ and $\psi_k \in C^\infty_c(\R)$, %are 
	such that $\supp \psi_k\subset (a,b)$ and
	\begin{equation*}
		\|u-v\|_{C^1(\R; X)} < \eps.
	\end{equation*}
\end{lemma}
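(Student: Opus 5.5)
The plan is to approximate $u$ in two stages: first by a piecewise-linear interpolation in time with $X$-valued nodes, built from a smooth partition of unity supported in $(a,b)$, and then replace each node by an element of $X_0$.

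\emph{Step 1: approximation with nodes in $X$.} Since $\supp u$ is a compact subset of $(a,b)$, fix $[\alpha,\beta]\subset(a,b)$ containing $\supp u$ together with a margin $\delta_0>0$ on each side. For a small step $h>0$ take the grid $t_j = jh$ and a smooth partition of unity $\{\theta_j\}$ on $\R$ with $\supp\theta_j\subset(t_{j-1},t_{j+1})$, obtained by translating and rescaling a fixed profile, so that $\theta_j(t)=\theta(t/h - j)$. Then $\sum_j \theta_j\equiv1$, $\sum_j\theta_j'\equiv0$ and $|\theta_j'|\le C/h$. Define
\begin{equation*}
	w(t) = \sum_j \theta_j(t)\, u(t_j),
\end{equation*}
where the sum runs only over the finitely many $j$ with $\supp\theta_j\cap\supp u\neq\emptyset$ (all other terms vanish anyway since $u(t_j)=0$ there). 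For $h<\delta_0/2$ each $\supp\theta_j$ used lies in $(a,b)$.

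\emph{Step 2: the $C^1$ estimate.} This is the main step. The $C^0$ bound is immediate:
\begin{equation*}
	\|w(t)-u(t)\| \le \sum_j \theta_j(t)\,\|u(t_j)-u(t)\| \le \omega_u(2h),
\end{equation*}
where $\omega_u$ is the modulus of continuity of $u$, and $u$ is uniformly continuous because it is continuous with compact support. For the derivative, use $\sum_j\theta_j'=0$ to write
\begin{equation*}
	w'(t) = \sum_j \theta_j'(t)\bigl(u(t_j)-u(t)\bigr).
\end{equation*}
Expand $u(t_j)-u(t) = (t_j-t)u'(t) + r_j(t)$ with $\|r_j(t)\|\le 2h\,\omega_{u'}(2h)$, using the mean value inequality applied to $u-(\cdot)u'(t)$. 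Since at most two or three $j$ are active at any $t$ and $|\theta_j'|\le C/h$, the remainder terms contribute $O(\omega_{u'}(2h))$. The main term is $u'(t)\sum_j\theta_j'(t)(t_j-t)$, and
\begin{equation*}
	\sum_j\theta_j'(t)(t_j-t) = \sum_j\theta_j'(t)\,t_j = \frac{d}{dt}\sum_j\theta_j(t)\,t_j .
\end{equation*}
So one needs the partition to reproduce linear functions, $\sum_j t_j\theta_j(t) = t + \mathrm{const}$, which gives $\sum_j\theta_j'(t)(t_j-t)=-1$, hence $w'(t) = u'(t) + O(\omega_{u'}(2h))$. This is the delicate point. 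The cleanest fix is to take $\theta_j$ to be the piecewise-linear hat functions, which reproduce linear functions exactly, and mollify them at scale $h^2$. Mollification preserves both $\sum_j\theta_j=1$ and $\sum_j t_j\theta_j = t$ (convolution of the identity with an even mollifier is the identity), keeps the supports inside $(a,b)$ for small $h$, and keeps $|\theta_j'|\le C/h$. Thus $\|w-u\|_{C^1}\to0$ as $h\to0$.

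\emph{Step 3: replacing the nodes.} Fix $h$ with $\|w-u\|_{C^1}<\eps/2$; now only finitely many nodes $u(t_j)$, $j=1,\dots,n$, occur. By density of $X_0$, choose $v_j\in X_0$ with $\|v_j-u(t_j)\|_X<\eta$, and set $v=\sum_j\theta_j v_j$ with $\psi_j=\theta_j\in C^\infty_c(\R)$ and $\supp\psi_j\subset(a,b)$. Then
\begin{equation*}
	\|v-w\|_{C^1}\le \eta\sum_j\|\theta_j\|_{C^1},
\end{equation*}
which is less than $\eps/2$ for $\eta$ small. This completes the proof.
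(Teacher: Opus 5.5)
Your argument is correct, but it takes a different route from the paper.

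\textbf{The paper's route.} The paper uses two scales. It first mollifies $u$ at a fixed scale $\delta$, so that $\|u*\omega_\delta-u\|_{C^1}$ is small. It then discretizes the convolution integral by a Riemann sum with mesh $1/n\ll\delta$, giving $f_n(t)=\sum_k u(k/n)\psi_k(t)$ with $\psi_k=\1_{((k-1)/n,\,k/n)}*\omega_\delta$. Because $\delta$ is frozen before $n\to\infty$, $f_n'$ involves only $\omega_\delta'$, whose sup norm is a fixed constant. So the discretization step needs only uniform continuity of $u$, and no cancellation is required.

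\textbf{Your route.} You work at a single scale $h$, so $|\theta_j'|\sim 1/h$ and the derivative error is $O(1)$ unless there is an exact first-order cancellation. You correctly identify that this forces the partition to reproduce affine functions, and that mollified hat functions achieve it. The remainder is then controlled by the mean value inequality and uniform continuity of $u'$.

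\textbf{What each buys.} You use only finite sums and the mean value inequality, which holds in any normed space via Hahn--Banach. So you never integrate $X$-valued functions and can skip the paper's preliminary passage to the completion of $X$. You also get an explicit rate $O(\omega_u(2h)+\omega_{u'}(2h))$. The paper's two-scale argument avoids the affine-reproduction subtlety entirely.

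\textbf{Two small slips to fix.}
\begin{itemize}
\item Since $\sum_j t_j\theta_j(t)=t$, you get $\sum_j\theta_j'(t)(t_j-t)=\frac{d}{dt}t=+1$, not $-1$. With $-1$ you would obtain $w'\approx -u'$; with $+1$ your conclusion $w'=u'+O(\omega_{u'}(2h))$ follows.
\item Mollifying the hat $\Lambda(s)=\max(0,1-|s|)$ at scale $h^2$ yields $\theta_j(t)=(\Lambda*\omega_h)(t/h-j)$. This profile depends on $h$, and its supports slightly exceed $(t_{j-1},t_{j+1})$, contrary to what Step 1 says. This is harmless: $\theta_j\ge 0$, $|\theta_j'|\le 1/h$, at most three indices are active for $h\le 1/2$, and you only need $2h+2h^2<\delta_0$ for the supports to stay in $(a,b)$. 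Alternatively, mollify $\Lambda$ at a fixed scale in the rescaled variable to get a genuinely $h$-independent profile.
\end{itemize}
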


\begin{proof}
Replacing, if necessary, $X$ with its completion (see e.g. \cite[Proposition 6.4.6]{BS2020}), without loss of generality we may assume that $X$ is Banach.
Let us fix $\alpha, \beta \in \R$ such that $\supp u \subset [\alpha, \beta] \subset (a,b)$.
Without loss of generality for simplicity we may assume that $\alpha = 0$ and $\beta = 1$.
Consider the convolution
\begin{equation*}
	u_\delta(t) := (u * \omega_\delta)(t) = \int_0^1 u(s) \omega_\delta(t-s) \, ds.
\end{equation*}
of $u$ with the standard mollification kernel $\omega_\delta$.
Let $\eps > 0$.
Then $u_\delta' = u' * \omega_\delta$ and for sufficiently small $\delta > 0$ we have $[-\delta, 1+\delta] \subset[a,b]$, $\supp u_\delta \subset (a,b)$ and 
\begin{equation*}
	\sup_{t\in \R}(\|u_\delta(t)-u(t)\|_X +\|u_\delta'(t) - u'(t)\|_X) < \eps.
\end{equation*}
Let us fix such $\delta$ and for $n\in\N$ consider the approximation $f_n(t)$ of the integral $u_\delta(t)$ given by
\begin{equation*}
	f_n(t) = \sum_{k=1}^n \int_{(k-1)/n}^{k/n} u(k/n) \omega_\delta(t-s) \, ds .
\end{equation*}
For all $t\in \R$ we have
\begin{equation*}
	\|f_n(t) - u_\delta(t)\|_X \le \|\omega_\delta\|_\infty \sum_{k=1}^n \int_{(k-1)/n}^{k/n} \|u(k/n) - u(s)\|_X \, ds.
\end{equation*}
Similarly, since
\begin{equation*}
	u_\delta'(t) = \int_0^1 u(s) (\omega_\delta)'(t-s) \, ds,
\end{equation*}
we have the estimate
\begin{equation*}
	\|f_n'(t) - (u_\delta)'(t)\|_X \le \|(\omega_\delta)'\|_\infty \sum_{k=1}^n \int_{(k-1)/n}^{k/n} \|u(k/n) - u(s)\|_X \, ds.
\end{equation*}
Hence by uniform continuity of $u$ for all sufficiently large $n\in \N$
we have
\begin{equation*}
	\sup_{t\in \R}(\|f_n(t) - u_\delta(t)\|_X + \|f_n'(t) - (u_\delta)'(t)\|_X) < \eps.
\end{equation*}
By the estimates obtained above 
\begin{equation*}
	\|f_n - u\|_{C^1(\R; X)} \le \|f_n - u_\delta\|_{C^1(\R; X)} + \|u_\delta - u\|_{C^1(\R; X)} < 2 \eps
\end{equation*}
for all sufficiently large $n$. Let us fix such $n$ and for all $k\in \overline{1,n}$ let
us choose $v_k \in X_0$ such that
\begin{equation*}
	\|v_k - u(k/n)\|_X < \eps \cdot \left(\int_{-1}^1 \omega_\delta(t) \, dt + \int_{-1}^1 |(\omega_\delta)'(t)| \, dt \right)^{-1}.
\end{equation*}
Let $\psi_k(t) := \int_{(k-1)/n}^{k/n} \omega_\delta(t-s) \, ds$
and
\begin{equation*}
	v(t) := \sum_{k=1}^n v_k \psi_k(t).
\end{equation*}
The functions $\psi_k(t) = \1_{((k-1)/n, \ k/n)} * \omega_\delta$ belong to $C_c^\infty(\R)$, and clearly $\supp \psi_k \subset [-\delta, 1+\delta]$.
Since $f_n(t) = \sum_{k=1}^n u(k/n) \psi_k(t)$, we have
\begin{equation*}
	\sup_{t\in \R} \|v(t) - f_n(t)\|_X \le \eps \int_{-1}^1 \omega_\delta(t) \, dt
	\cdot \left(\int_{-1}^1 \omega_\delta(t) \, dt + \int_{-1}^1 |(\omega_\delta)'(t)| \, dt \right)^{-1}
\end{equation*}
and similarly
\begin{equation*}
	\sup_{t\in \R} \|v'(t) - f_n'(t)\|_X \le \eps \int_{-1}^1 |(\omega_\delta)'(t)| \, dt
	\cdot \left(\int_{-1}^1 \omega_\delta(t) \, dt + \int_{-1}^1 |(\omega_\delta)'(t)| \, dt \right)^{-1}
\end{equation*}
Hence $\|v - f_n\|_{C^1(\R; X)} < \eps$. Combining all the estimates obtained above, we conclude that
$\|v - u\|_{C^1(\R; X)} \le \|v - f_n\|_{C^1(\R; X)} + \|f_n - u_\delta\|_{C^1(\R; X)} + \|u_\delta - u\|_{C^1(\R; X)} < 3\eps$.
\end{proof}

Any function $f\colon \R^{d+1} \to \R$ can be identified with a function
$f\colon \R\to \R^{(\R^d)}$ given by
$f(x_1)(x_2, \ldots, x_{d+1}) := f(x_1, \ldots, x_{d+1})$, and vice versa.
With this convention the following inclusion holds:
\begin{equation}\label{Cinf-C1C1}
	C^\infty_c(\R^{d+1}) \subset C^1_c(\R; C^1_c(\R^d)) %\subset C^1_c(\R^{d+1}).
\end{equation}
In view of Lemma~\ref{dense-subspace} this inclusion is dense (with respect to the corresponding $C^1$-norms).

The inclusion \eqref{Cinf-C1C1} can be partially inverted: $C^1_c(\R; C^1_c(\R^{d})) \subset C^1(\R^{d+1})$.
But a function from the former space is not necessarily compactly supported, viewed as an element of the latter one. For example one can consider $f(t,x) = t^2 a(t) b(tx)$,
where $a\in C^1_c(\R)$, $a(0)\ne 0$, $b\in C^1_c(\R^d)$, $b \not \equiv 0$.
The same example shows that $C^1_c(\R; C^1_c(\R^{d}))$ is not necessarily a subset of $L^2(\R^{d+1})$.

\subsection{Weak solutions of continuity equation}

Let $I\subset \R$ be a nonempty open interval.
Let $\v \colon I\times \R^d \to \R^d$ be a locally integrable time-dependent vector field.

\begin{definition}
A function $\rho \in L^1_\loc(I\times \R^d)$ is a (weak) solution of
\eqref{CE}
if $\rho \v \in \boldsymbol{L}^1_\loc(I\times \R^d)$ and
for any $\Phi \in C^1_c(I\times \R^d)$ the following holds:

\begin{equation}
\label{CE-D'}
\int_I \int_{\R^d} \rho(t,x) \d_t \Phi(t,x) \, dx \,dt
+ \int_I \int_{\R^d} \rho \v \cdot \nabla_x \Phi(t,x) \, dx \,dt = 0.
\end{equation}
\end{definition}

%\begin{remark}\label{equivalent-def-of-weak-sol}
%Using test functions of the form $\Phi(t,x) = \psi(t) \fhi(x)$,
%where $\psi \in C_c^1(I)$ and $\fhi \in C_c^1(\R^d)$,
%it is easy to see that $\rho\in L^1_\loc(I\times \R^d)$ solves \eqref{CE} if and only if
%for any $\fhi \in C_c^1(\R^d)$ the function $t \mapsto \int_{\R^d} \rho(t,x) \fhi(x) \, dx$ belongs to the Sobolev space $W^{1,1}_\loc(I)$ and
%\begin{equation}\label{CE-sep-var}
%	\d_t \int_{\R^d} \rho(t,x) \fhi(x) \, dx = \int_{\R^d} \rho(t, x) \v(t, x) \cdot \nabla \fhi(x) \, dx
%\end{equation}
%in the sense of distributions. Indeed, by Fubini theorem
%the function
%$$
%t\mapsto \int_{\R^d} \rho(t, x) \v(t, x) \cdot \nabla \fhi(x) \, dx
%$$
%belongs to $L^1_\loc(I)$.
%\end{remark}
%Here $C^1_c(\R^n)$ denotes the set of all compactly supported continuously differentiable functions $f\colon \R^n \to \R$.

\begin{proposition}\label{prop-CE-int}\label{equivalent-def-of-weak-sol}
Let $\rho \in L^1_\loc(I\times \R^d)$. Then the following are equivalent:
\begin{enumerate}
	\item $\rho$ solves \eqref{CE};
	\item for any $\fhi \in C_c^1(\R^d)$ the function $t \mapsto \int_{\R^d} \rho(t,x) \fhi(x) \, dx$ belongs to the Sobolev space $W^{1,1}_\loc(I)$ and
	\begin{equation}\label{CE-sep-var}
		\d_t \int_{\R^d} \rho(t,x) \fhi(x) \, dx = \int_{\R^d} \rho(t, x) \v(t, x) \cdot \nabla \fhi(x) \, dx
	\end{equation}
	in the sense of distributions (or, equivalently, for a.e. $t$);
	\item there exists a negligible set $N \subset I$ such that
	for all $s, t \in I\setminus N$ and for all $\fhi \in C^1_c(\R^d)$ % we have
	\begin{equation}\label{CE-int}
		\int_{\R^d} \rho(t,x) \fhi(x) \, dx = \int_{\R^d} \rho(s,x) \fhi(x) \, dx + \int_s^t \int_{\R^d} \rho(\tau, x) \v(\tau, x) \cdot \nabla \fhi(x) \, dx \, d\tau.
	\end{equation}
\end{enumerate}
\end{proposition}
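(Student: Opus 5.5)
I will prove the cycle (1)$\Rightarrow$(2)$\Rightarrow$(3)$\Rightarrow$(1). Throughout, $\rho\v\in \boldsymbol{L}^1_\loc(I\times\R^d)$ is taken as part of the standing assumption; for (2) and (3) it is needed to make sense of the right-hand sides. For fixed $\fhi\in C^1_c(\R^d)$ write
\begin{equation*}
	g_\fhi(t) := \int_{\R^d}\rho(t,x)\fhi(x)\,dx, \qquad h_\fhi(t) := \int_{\R^d}\rho(t,x)\v(t,x)\cdot\nabla\fhi(x)\,dx.
\end{equation*}
By Fubini's theorem both $g_\fhi$ and $h_\fhi$ belong to $L^1_\loc(I)$.

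\textbf{(1)$\Rightarrow$(2).} Take test functions $\Phi(t,x)=\psi(t)\fhi(x)$ with $\psi\in C^1_c(I)$ in \eqref{CE-D'}. This gives $\int_I g_\fhi\psi'\,dt=-\int_I h_\fhi\psi\,dt$, so $g_\fhi'=h_\fhi$ in $\mathcal D'(I)$. Since $h_\fhi\in L^1_\loc(I)$, we get $g_\fhi\in W^{1,1}_\loc(I)$, and \eqref{CE-sep-var} holds both distributionally and for a.e. $t$.

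\textbf{(2)$\Rightarrow$(3).} This is the main obstacle, because the exceptional null set must not depend on $\fhi$. For each fixed $\fhi$ the $W^{1,1}_\loc$ function $g_\fhi$ has an absolutely continuous representative. Hence \eqref{CE-int} holds for all $s,t$ outside a null set $N_\fhi$, namely the set where $g_\fhi(t)$ differs from its continuous representative.

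To remove the dependence on $\fhi$:
\begin{enumerate}
	\item Choose a countable family $D\subset C^1_c(\R^d)$ that is dense in the following sense: for every $\fhi\in C^1_c(\R^d)$ there are $\fhi_n\in D$ with supports in a fixed compact set and $\fhi_n\to\fhi$ in $C^1$. One can take, for instance, a countable dense subset of $C^1_c(B_k)$ for each $k\in\N$, which exists by separability.
	\item Set $N:=\bigcup_{\fhi\in D}N_\fhi$ together with the null set of $t$ where $\rho(t,\cdot)\notin L^1_\loc(\R^d)$. Then $N$ is negligible.
	\item For arbitrary $\fhi$, pass to the limit in \eqref{CE-int} along $\fhi_n\to\fhi$, for $s,t\notin N$. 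The left side and the first term on the right converge because $\rho(t,\cdot),\rho(s,\cdot)\in L^1_\loc$ and $\fhi_n\to\fhi$ uniformly with common compact support. The integral term converges by dominated convergence, using $\rho\v\in L^1_\loc$ on $[s,t]\times K$ and $\nabla\fhi_n\to\nabla\fhi$ uniformly.
\end{enumerate}

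\textbf{(3)$\Rightarrow$(1).} By \eqref{CE-int}, $g_\fhi$ coincides a.e. with an absolutely continuous function whose derivative is $h_\fhi$; fix $s\notin N$ and vary $t$ to see this. Integrating against $\psi'$ therefore gives \eqref{CE-D'} for all product test functions $\psi(t)\fhi(x)$, and by linearity for their finite sums.

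A general $\Phi\in C^1_c(I\times\R^d)$ belongs to $C^1_c(I;C^1_c(\R^d))$ with support in a compact set $[a,b]\times K$. By Lemma~\ref{dense-subspace} with $X=C^1_c(\R^d)$, applied on a slightly larger interval and with $X_0=X$ (cutting off in $x$ if needed so that all supports stay inside a fixed compact set), $\Phi$ is a $C^1$-limit of such finite sums. Finally, \eqref{CE-D'} passes to the limit because $\rho,\ \rho\v\in L^1$ on that compact set.
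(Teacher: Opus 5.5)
Apart from one step, your argument is the paper's. You use product test functions $\psi(t)\fhi(x)$ with Fubini, a countable dense family of test functions to make the null set independent of $\fhi$, and Lemma~\ref{dense-subspace} to pass from product test functions to general ones. Closing the cycle via (3)$\Rightarrow$(1) simply merges the paper's (3)$\Rightarrow$(2) and (2)$\Rightarrow$(1). In (2)$\Rightarrow$(3) you require the approximants $\fhi_n$ to have supports in a fixed compact set. That is a real improvement in precision over the paper's sketch, since $\rho(t,\cdot)$ is only locally integrable.

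\textbf{The gap.} The step in (3)$\Rightarrow$(1) where you claim that a general $\Phi\in C^1_c(I\times\R^d)$ belongs to $C^1_c(I;C^1_c(\R^d))$ is false. Lemma~\ref{dense-subspace} only controls $\nabla_x\Phi$ if $C^1_c(\R^d)$ carries the $C^1$ norm. Differentiability of $t\mapsto\Phi(t,\cdot)$ in that norm requires the $t$-difference quotients of $\nabla_x\Phi$ to converge uniformly, which essentially means $\d_t\nabla_x\Phi$ must exist and be continuous. A $C^1$ function need not satisfy this. Take $\Phi(t,x)=\chi(t,x)\,|t-x_1|^{3/2}$ with $\chi\in C^\infty_c(I\times\R^d)$ equal to $1$ near a point of $\{t=x_1\}$. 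This $\Phi$ lies in $C^1_c$, but the difference quotients in $t$ of $\d_{x_1}\Phi$ behave like $|h|^{-1/2}$ on that line. If you instead give $C^1_c(\R^d)$ the sup norm, the membership holds, but then the lemma gives no convergence of $\nabla_x$ of the approximants. In that case you cannot pass to the limit in the term containing $\rho\v\cdot\nabla_x\Phi$.

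\textbf{The fix.} This is exactly why the paper records only the inclusion \eqref{Cinf-C1C1}, namely $C^\infty_c(\R^{d+1})\subset C^1_c(\R;C^1_c(\R^d))$. First obtain \eqref{CE-D'} for $\Phi\in C^\infty_c(I\times\R^d)$ via the lemma. Then treat a general $\Phi\in C^1_c(I\times\R^d)$ by mollification: $\Phi*\omega_\eps\to\Phi$ in $C^1$, with supports in a fixed compact subset of $I\times\R^d$. Pass to the limit once more in \eqref{CE-D'}, using $\rho,\rho\v\in L^1$ on that compact set. With this extra step inserted, your proof is complete.
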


\begin{proof}
\emph{$(1)\Rightarrow(2)$.}
By Fubini theorem the functions
$$
f_\fhi(t) :=\int_{\R^d} \rho(t,x) \fhi(x) \, dx, 
\quad
g_\fhi(t) :=  \int_{\R^d} \rho(t, x) \v(t, x) \cdot \nabla \fhi(x) \, dx
$$
belong to $L^1_\loc(I)$.
Then using test functions of the form $\Phi(t,x) = \psi(t) \fhi(x)$ (where $\psi \in C^1_c(I)$ and $\fhi \in C^1_c(\R^d)$) in \eqref{CE-D'}, it is easy to see that $(f_\fhi)' = g_\fhi$ in the sense of distributions.
Then $f_\fhi \in W^{1,1}_\loc(I)$. 
By the well-known property of the Sobolev functions the equality $(f_\fhi)' = g_\fhi$ holds in the sense of distributions if and only if it holds for a.e. $t$.
Thus \eqref{CE-sep-var} holds.

\emph{$(2)\Rightarrow(1)$.}
By \eqref{CE-sep-var} (in the sense of distributions) the equality \eqref{CE-D'} holds for all test functions of the form $\Phi(t,x) = \psi(t) \fhi(x)$, where $\psi \in C^1_c(I)$ and $\fhi \in C^1_c(\R^d)$.
In view of \eqref{Cinf-C1C1} and Lemma~\ref{dense-subspace} any $\Phi\in C^\infty_c(I\times \R^d)$ can be approximated by a finite linear combination of such functions in $C^1_c(I\times \R^d)$, and passing to the limit we conclude that \eqref{CE-D'} holds for any such~$\Phi$.
It remains to use the density of $C^\infty_c(I\times\R^d)$ in $C^1_c(I\times\R^d)$.

\emph{$(2)\Rightarrow(3)$.}
By the well-known property of the Sobolev space $W^{1,1}_\loc(I)$ there exists a negligible set
$N_\fhi \subset I$ such that for all ${s,t \in I \setminus N_\fhi}$ we have
$f_\fhi(t) - f_\fhi(s) = \int_s^t (f_\fhi)' (\tau) \, d\tau$.
Let $\mathcal F$ denote a countable set of functions $\fhi \in C_c^1(\R^d)$
such that $\mathcal F$ is dense in $C_c^1(\R^d)$. %(with respect to the corresponding $C^1$ norm)
The set $N:= \bigcup_{\fhi \in \mathcal F} N_\fhi$ is negligible,
being a countable union of negligible sets. By construction of $N$,
for all $s,t \in I\setminus N$ the equality \eqref{CE-int} holds for all
$\fhi \in \mathcal F$. Since any $\fhi \in C^1_c(\R^d)$ can be approximated by a sequence of $\{\fhi_n\}_{n\in \N} \subset \mathcal F$, one can write \eqref{CE-int} with $\fhi$ replaced by $\fhi_n$, pass to the limit as $n\to \infty$, and obtain \eqref{CE-int} with $\fhi$.

\emph{$(3)\Rightarrow(2)$.}
Since $g_\fhi \in L^1_\loc(I)$, the function $t\mapsto \int_s^t g_\fhi(\tau) \, d\tau$ belongs to $W^{1,1}_\loc(I)$. Then by \eqref{CE-int} the function $f_\fhi$ belongs to $W^{1,1}_\loc(I)$ and satisfies \eqref{CE-sep-var}. 
\end{proof}

In order to give a meaning to the initial condition in the Cauchy problem for the continuity equation, let us introduce the following definition:

\begin{definition}\label{def-trace}
Let $\rho \in L^1_\loc(I\times \R^d)$ be a weak solution of \eqref{CE}.
Let $N\subset I$ be given by Proposition~\ref{prop-CE-int} and let $s\in I\setminus N$.
Then for any $t\in \overline{I}$ the distribution
\begin{equation}\label{trace-def}
\langle \rho|_t, \fhi \rangle := \int_{\R^d} \rho(s,x) \fhi(x) \, dx + \int_s^t \int_{\R^d} \rho(\tau, x) \v(\tau, x) \cdot \nabla \fhi(x) \, dx \, d\tau,
\qquad \fhi \in \mathscr{D}(\R^d),
\end{equation}
is called the trace of $\rho$ at time $t$.
\end{definition}

\begin{remark}\label{rem-trace-and-solution}\label{rem-trace-on-closure}
By \eqref{CE-int}
for almost all $t$ the distribution $\rho|_t$ coincides with the locally integrable function $\rho(t, \cdot)$. Furthermore, by \eqref{CE-int} the distribution $\rho|_t$ does not depend on the choice of $s$, and the distribution-valued function $t \mapsto \rho|_t$ is continuous on $\overline{I}$ (i.e. for any $\fhi \in \mathscr{D}(\R^d)$
the function $t\mapsto \langle\rho|_t, \fhi\rangle$ is continuous on $\overline{I}$).
\end{remark}

In general the distribution $\rho|_t$ may be singular for some values of $t$, see e.g. \cite[Remark~1.11]{BonicattoPhD}.
However for large classes of weak solutions the regularity of $\rho|_t$
is easily improved.

\begin{proposition}\label{weakly-cont-version}
Suppose that $\rho\in L^\infty(I; L^p(\R^d))$ solves \eqref{CE},
where $1 < p \le +\infty$. Then for all $t\in \overline{I}$ we have $\rho|_t \in L^p(\R^d)$ and the function $t\mapsto \rho|_t$ is weak* continuous in $L^p(\R^d)$.
Furthermore, for all $t, s\in \overline{I}$ we have
\begin{equation}\label{CE-int-by-parts}
\langle \rho|_t, \fhi \rangle = \langle \rho|_s,  \fhi \rangle + \int_s^t \int_{\R^d} \rho(\tau, x) \v(\tau, x) \cdot \nabla \fhi(x) \, dx \, d\tau,
\qquad \fhi \in  C_c^1(\R^d),
\end{equation}
\end{proposition}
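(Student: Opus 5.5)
The plan is to show that the trace distributions $\rho|_t$ are uniformly bounded in $L^p$ by $M := \|\rho\|_{L^\infty(I;L^p(\R^d))}$, and then use the continuity of $t\mapsto\langle\rho|_t,\fhi\rangle$ from Remark~\ref{rem-trace-and-solution} together with weak* compactness in $L^p$ (here $1<p\le\infty$ is used: $L^p = (L^{p'})^*$ with $1\le p'<\infty$).

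\emph{Step 1: the trace lies in $L^p$.} Let $N$ be the negligible set from Proposition~\ref{prop-CE-int}. Enlarging $N$ by a negligible set, we may assume $\|\rho(\tau,\cdot)\|_p\le M$ for $\tau\in I\setminus N$. For $\tau\in I\setminus N$ the formula \eqref{trace-def} with $s=\tau$ gives $\langle\rho|_\tau,\fhi\rangle=\int\rho(\tau,x)\fhi(x)\,dx$, so $|\langle \rho|_\tau,\fhi\rangle|\le M\|\fhi\|_{p'}$. Now fix any $t\in\overline I$ and pick $t_n\in I\setminus N$ with $t_n\to t$; such a sequence exists since $N$ has empty interior. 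By continuity of $t\mapsto\langle\rho|_t,\fhi\rangle$ we get $|\langle\rho|_t,\fhi\rangle|\le M\|\fhi\|_{p'}$ for all $\fhi\in\mathscr D(\R^d)$. Because $\mathscr D$ is dense in $L^{p'}$ ($p'<\infty$), the functional $\rho|_t$ extends uniquely to a bounded functional on $L^{p'}$. By Riesz representation it is given by a function in $L^p$ with norm $\le M$; when $p=\infty$ and $p'=1$ we use $(L^1)^*=L^\infty$.

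\emph{Step 2: formula \eqref{CE-int-by-parts}.} For $\fhi\in\mathscr D$ this follows by subtracting \eqref{trace-def} at $t$ and at $s$, using the same base point. For $\fhi\in C^1_c$, approximate by mollifications $\fhi_k\to\fhi$ in $C^1$ with supports in a fixed compact set $K$. The left-hand side and $\langle\rho|_s,\fhi_k\rangle$ converge because $\fhi_k\to\fhi$ in $L^{p'}$. The integral term converges because $\rho\v\in L^1_\loc(I\times\R^d)$ and $\nabla\fhi_k\to\nabla\fhi$ uniformly. There is one subtlety: if $t$ or $s$ is an endpoint of $\overline I$, we need $\rho\v\in L^1((s,t)\times K)$ up to the boundary. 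I expect this is the main point requiring care. If the standing hypotheses do not guarantee it, I would argue that the integral $\int_s^t\int\rho\v\cdot\nabla\fhi$ is defined as the limit appearing in Definition~\ref{def-trace} and is finite. Alternatively, one can note that the right-hand side is interpreted exactly as in \eqref{trace-def}, so the identity holds for the extension by continuity.

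\emph{Step 3: weak* continuity.} Given $t_n\to t$ in $\overline I$ and $g\in L^{p'}$, pick $\fhi\in\mathscr D$ with $\|g-\fhi\|_{p'}<\eps$. Then
$|\langle\rho|_{t_n}-\rho|_t,g\rangle|\le 2M\eps+|\langle\rho|_{t_n}-\rho|_t,\fhi\rangle|$. The last term tends to $0$ by Remark~\ref{rem-trace-and-solution}, which gives weak* continuity.
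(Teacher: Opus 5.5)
Your proposal is correct and follows the paper's proof essentially step by step. Like the paper, you bound $\rho(t_n,\cdot)$ uniformly in $L^p$ along $t_n\in I\setminus N$ and pass to the limit using continuity of the trace. You then extend by density to $L^{p'}$ and use duality, obtain weak* continuity from the uniform bound (which you spell out via a density argument where the paper only asserts it), and get \eqref{CE-int-by-parts} by subtracting \eqref{trace-def} at two times.

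The endpoint concern you raise in Step~2 is not an additional gap relative to the paper. Definition~\ref{def-trace} already presupposes that the integral in \eqref{trace-def} makes sense for $t\in\overline{I}$, and the paper relies on this tacitly. This is automatic, for instance, when $\rho\v\in\boldsymbol{L}^1(I\times\R^d)$, as happens for autonomous $\v\in\boldsymbol{L}^2(\R^d)$, $p=2$ and bounded $I$. So you can replace your hedge with that observation.
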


\begin{proof}
By assumption there exists a negligible set $N_1 \subset I$
such that
\begin{equation}
\sup_{t\in I \setminus N_1} \|\rho(t, \cdot)\|_p \le C < \infty.
\label{uniform-boundedness-of-rho-at-tn}
\end{equation}
Let $N_2\subset I$ be a negligible set given by Proposition~\ref{prop-CE-int}
and let $N := N_1 \cup N_2$.
For any $t\in \overline{I}$ we can choose a sequence $t_n \in I \setminus N$ converging to $t$ as $n\to \infty$. By Proposition \ref{prop-CE-int} for all $\fhi \in C_c^1(\R^d)$
there exists $\langle \rho|_t, \fhi \rangle = \lim_{n\to \infty} \int_{\R^d} \rho(t_n, x) \fhi(x)\, dx$. By \eqref{uniform-boundedness-of-rho-at-tn}
\begin{equation*}
|\langle \rho|_t, \fhi \rangle| \le C \|\fhi\|_q,
\end{equation*}
where $q=p/(p-1)$. Since $C_c^1(\R^d)$ is dense in $L^q(\R^d)$,
the functional $\rho|_t$ extends to a bounded linear functional on $L^q(\R^d)$.
Since $L^p$ is dual of $L^q$, %by Riesz theorem 
there exists unique $r_t\in L^p(\R^d)$ such that $\langle \rho|_t, \fhi \rangle = \int_{\R^d} r_t(x) \fhi(x) \, dx$ for all $\fhi \in L^q(\R^d)$.
The weak* continuity follows from the uniform boundedness of $\|r_t\|_p$.

Ultimately, \eqref{CE-int-by-parts} can be obtained by subtracting
\eqref{trace-def} from \eqref{trace-def} with different values of $t$.
\end{proof}

For a.e. $t\in I$ by Remark~\ref{rem-trace-and-solution} we have $\rho|_t = \rho(t,\cdot)$. This motivates the following definition:
\begin{definition}\label{def-weakly-cont-version}
	Under the assumptions of the Proposition~\ref{weakly-cont-version}, the map $t\mapsto \rho|_t$ is called the \emph{weakly continuous version of $\rho$}.
\end{definition}
In particular, if $t\mapsto \rho(t, \cdot)$ is weakly* continuous on $\overline{I}$, then $\rho(t,\cdot) = \rho|_t$ for \emph{all $t\in \overline{I}$}.

Much more general version of Proposition~\ref{weakly-cont-version}
can be found e.g. in \cite[Lemma 8.1.2]{AGS}.
We refer to \cite{ChenFrid1999} for more general notion of trace and related results.

Sometimes the following observation is useful (see also \cite[Theorem 2.3.3]{CrippaPhD}):

\begin{remark}\label{extension-with-zero}
	Suppose that $I=[0,+\infty)$ and $\rho_0 \in L^p(\R^d)$.
	A function $\rho \in L^\infty(I; L^p(\R^d))$ solves~\eqref{CE}
	with the initial condition $\rho|_0 = \rho_0$
	if and only if
	\begin{equation}
		\tilde \rho(t, \cdot) =
		\begin{cases}
			\rho(t, \cdot), & t > 0 \\
			\rho_0(\cdot), & \text{otherwise}
		\end{cases}
		\qquad \text{and} \qquad
		\tilde \vi(t, \cdot) =
		\begin{cases}
			\vi(t, \cdot), & t > 0 \\
			0, & \text{otherwise}.
		\end{cases}
	\end{equation}
	solve \eqref{CE} in $\mathscr{D}'(\R\times \R^{d})$.
\end{remark}

Existence of weak solutions to the Cauchy problem for the continuity equation
was established in \cite[Proposition II.1]{DiPernaLions1989}.
We will not need the most general setting, and therefore the proof can be simplified:

\begin{theorem}\label{existence-of-weak-solutions}
Let $I=(0,T),$ where $T>0$.
Suppose that $\v \in \boldsymbol{L}^2(\R^d)$ is divergence-free vector field.
Then for any $s\in \overline{I}$ and any $\rho_0 \in L^2(\R^d)$ there exists a weak solution
${\rho \in L^\infty(I;L^2(\R^d))} \cap C_w(\overline{I}; L^2(\R^d))$ of the continuity equation
such that $\rho|_s = \rho_0$ and for all $t\in \overline I$
\begin{equation*}
\|\rho(t,\cdot)\| \le \|\rho_0\|.
\end{equation*}
\end{theorem}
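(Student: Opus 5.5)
We prove the theorem by approximating the velocity field, solving regularized problems explicitly, and passing to a weak limit. We may assume $s=0$; the case of general $s\in\overline I$ is identical, solving forward and backward from $s$ (equivalently, time reversal $t\mapsto 2s-t$ together with $\v\mapsto-\v$).

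\emph{Step 1: smooth approximations.} Let $\v_n := \v * \omega_{1/n}$ with a standard mollifier $\omega_{1/n}$. Then $\v_n$ is smooth, divergence-free, and converges to $\v$ in $\boldsymbol L^2(\R^d)$. However, $\v_n$ need not be globally Lipschitz when $\v$ is not compactly supported; it is only bounded with bounded derivatives, since $\|D^k\v_n\|_\infty \le \|\v\|\,\|D^k\omega_{1/n}\|$. This suffices, because a smooth field with bounded first derivatives has a global flow $X_n$. We also mollify the datum: $\rho_{0,n} := \rho_0 * \omega_{1/n}$, so that $\rho_{0,n}\in C^\infty\cap L^2$ and $\|\rho_{0,n}\|\le\|\rho_0\|$.

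\emph{Step 2: explicit solutions.} Let $X_n$ be the flow of $\v_n$. Since $\dive \v_n = 0$, $X_n(t,\cdot)$ preserves Lebesgue measure. Set $\rho_n(t,x) := \rho_{0,n}(X_n(t,\cdot)^{-1}(x))$. This is a classical, hence weak, solution of \eqref{CE} with field $\v_n$, and $\|\rho_n(t,\cdot)\| = \|\rho_{0,n}\|\le\|\rho_0\|$ for all $t$ by measure preservation. The solution is continuous in time in $L^2$, which is easy to see for smooth compactly supported data and follows by density otherwise. Its trace at $0$ is $\rho_{0,n}$.

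\emph{Step 3: passage to the limit.} The family $\rho_n$ is bounded in $L^\infty(I;L^2(\R^d))$, so a subsequence converges weak* to some $\rho$, and $\rho$ inherits the same norm bound. In the weak formulation \eqref{CE-D'} the term $\int\!\!\int \rho_n\d_t\Phi$ passes to the limit directly. The term $\int\!\!\int\rho_n \v_n\cdot\nabla\Phi$ is the delicate one, since it is a product of two approximations. We split it as
\[
\int\!\!\int\rho_n(\v_n-\v)\cdot\nabla\Phi + \int\!\!\int\rho_n\v\cdot\nabla\Phi .
\]
The first part is bounded by $\|\rho_n\|_{L^\infty L^2}\,\|\v_n-\v\|\,\|\nabla\Phi\|_{L^1_t L^\infty_x}\to 0$. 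The second part converges because $\v\cdot\nabla\Phi\in L^1(I;L^2)$ is an admissible test element for weak* convergence in $L^\infty(I;L^2)$. Note also that $\rho\v\in L^1_\loc$, as a product of two $L^2$ functions.

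\emph{Step 4: initial condition and continuity.} By Proposition~\ref{weakly-cont-version} the weakly continuous version of $\rho$ exists; this gives $\rho\in C_w(\overline I;L^2)$ with $\|\rho|_t\|\le\|\rho_0\|$ for all $t$ (norm lower semicontinuity under weak convergence). It remains to check $\rho|_0=\rho_0$. For $\fhi\in C^1_c$, identity \eqref{CE-int-by-parts} for $\rho_n$, taken between $0$ and $t$ and multiplied by $\psi(t)\in L^1(I)$, then integrated, passes to the limit exactly as in Step 3; the $\rho_{0,n}$ term converges to the $\rho_0$ term. This yields, for a.e. $t$,
\[
\int\rho(t)\fhi = \int\rho_0\fhi + \int_0^t\!\!\int\rho\,\v\cdot\nabla\fhi .
\]
Comparing with \eqref{trace-def} then gives $\rho|_0=\rho_0$.

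The main obstacle is Step 3, namely the product $\rho_n\v_n$ under only weak convergence of $\rho_n$. It is resolved by the strong $L^2$ convergence of $\v_n$ combined with the uniform $L^2$ bound on $\rho_n$, which is precisely where the divergence-free assumption (measure preservation) enters.
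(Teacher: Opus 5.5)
Your proof is correct and follows essentially the same route as the paper: you mollify $\v$ and $\rho_0$, solve by characteristics, use measure preservation for the uniform $L^2$ bound, extract a weak limit, and handle $\rho_n\v_n$ via the strong $L^2$ convergence of $\v_n$. The differences are only in packaging: you use weak* compactness in $L^\infty(I;L^2(\R^d))$ instead of weak compactness in $L^2(I\times\R^d)$, and you identify $\rho|_s=\rho_0$ through the time-integrated identity together with Proposition~\ref{weakly-cont-version}, whereas the paper takes the weak limit of $\rho^\eps(t,\cdot)$ at every $t$ directly. (One small slip: since $\|D\v_n\|_\infty\le\|\v\|\,\|D\omega_{1/n}\|$, the mollified field is in fact globally Lipschitz, contrary to your remark, which only makes the global flow easier to obtain.)
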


\begin{proof}
Let $\omega\in C_c^\infty(\R^d)$ denote the standard mollification kernel,
$\omega_\eps(x):=\eps^{-d} \omega(x/\eps)$.
Let $\v_\eps := \v * \omega_\eps$, denote the convolution of $\v$ with $\omega_\eps$.
By the well-known properties of convolution
$\v_\eps \in \boldsymbol{L}^2(\R^d) \cap \boldsymbol{L}^\infty(\R^d) \cap \boldsymbol{C}^\infty(\R^d)$,
$\dive \v_\eps = 0$ and $\|\v_\eps - \v\|_2 \to 0$ as $\eps \to 0$.
Also let $\rho_0^\eps := \rho_0 * \omega_\eps$.
(We will tacitly assume that $\eps$ is set to some
infinitesimal sequence of strictly positive numbers.)

Since $\v_\eps$ is smooth and divergence-free, the Cauchy problem
\begin{equation*}
\left\{
\begin{aligned}
&\d_t \rho^\eps + \dive(\rho^\eps \v_\eps) = 0 \qquad\text{on } I \times \R^d, \\
&\rho^\eps(s, \cdot) = \rho_0^\eps \qquad\text{on }\R^d
\end{aligned}
\right.
\end{equation*}
has a unique smooth solution $\rho^\eps=\rho^\eps(t,x)$, constructed by the method of characteristics:
\begin{equation*}
\rho^\eps(t,y) = \rho^\eps_0(X^\eps(s-t, y)),
\end{equation*}
where $X^\eps=X^\eps(t,x)$ is the flow of $\v_\eps$:
\begin{equation*}
\left\{
\begin{aligned}
& \d_t X^\eps(t,x) = \v_\eps(X^\eps(t,x)), \\
& X^\eps(0,x) = x
\end{aligned}
\right.
\end{equation*}
for all $x\in \R^d$ and $t\in \R$.
Since $\v_\eps$ is smooth and bounded,
the flow $X^\eps$ exists and is unique (by the classical Cauchy--Lipschitz theory).

Since $\v$ is divergence-free, the flow $X^\eps(t, \cdot)$ preserves the Lebesgue measure, i.e. for all $t$ we have $X^\eps(t, \cdot)_\# \L^d = \L^d$.
Then for all $t$ we have
\begin{equation}
\label{norm-of-approximate-solution}
\|\rho^\eps(t, \cdot)\|_2 = \|\rho^\eps_0\|_2.
\end{equation}
Hence the sequence $\rho^\eps$ is uniformly bounded in $L^\infty(I; L^2(\R^d))$
and, consequently, in $L^2(I \times \R^d)$.
Since norm-bounded subsets of Hilbert spaces are weakly sequentially compact,
we can extract a subsequence (without relabeling) such that $\rho^\eps$
converges weakly in $L^2(I\times \R^d)$ to some $\rho \in L^2(I\times \R^d)$.
Since $\v_\eps$ converges to $\v$ strongly in $\boldsymbol{L}^2(\R^d)$,
passing to the limit in \eqref{CE-D'}
we deduce that $\rho$ solves the continuity equation.

For all $t\in \overline{I}$ the approximate solution $\rho^\eps$ for all $\fhi\in C^1_c(\R^d)$ satisfies
\begin{equation}
\label{CE-approx-int}
\int_{\R^d} \rho^\eps(t,x) \fhi(x)\, dx = \int_{\R^d} \rho^\eps_0(x) \fhi(x) \, dx + \int_s^t \int_{\R^d} \rho^\eps(\tau, x) \v_\eps(x) \cdot \nabla \fhi(x) \, dx \, d\tau
\end{equation}
Passing in this equality to the limit as $\eps \to 0$
we obtain that for all $t \in \overline{I}$ and for all $\fhi \in C^1_c(\R^d)$
the sequence $\{\int_{\R^d} \rho^\eps(t,x) \fhi(x)\, dx\}_{\eps}$ converges.
By \eqref{norm-of-approximate-solution}
the $L^2$-norms of $\rho^\eps(t, \cdot)$ are uniformly bounded,
hence for all $t\in \overline{I}$ the sequence $\{\rho^\eps(t,\cdot)\}$
converges weakly in $L^2(\R^d)$.
Let $r_t\in L^2(\R^d)$ denote the corresponding weak limit.
Passing to the limit in \eqref{CE-approx-int} we obtain that
\begin{equation}
\label{CE-D'-partial}
\int_{\R^d} r_t(x) \fhi(x)\, dx = \int_{\R^d} \rho_0(x) \fhi(x) \, dx + \int_s^t \int_{\R^d} \rho(\tau, x) \v(x) \cdot \nabla \fhi(x) \, dx \, d\tau
\end{equation}
holds for \emph{all} $t\in \overline{I}$ and for all $\fhi \in C^1_c(\R^d)$. 
Since the $L^2$-norms of $r_t$ are uniformly bounded, by~\eqref{CE-D'-partial} the map $t\mapsto r_t$ is weakly continuous (similarly to Proposition~\ref{weakly-cont-version}).

For any $\psi\in C^1_c(I)$ and any $\fhi \in C^1_c(\R^d)$
by Fubini theorem the following equality holds:
\begin{equation}
\label{Fubini-trick}
\int_I \int_{\R^d} \rho^{\eps}(t,x) \fhi(x) \, dx \, \psi(t) \, dt
= \iint_{I\times \R^d} \rho^{\eps}(t,x) \fhi(x) \psi(t) \, dt \, dx.
\end{equation}
Since for all $t\in I$ and $\eps>0$ by \eqref{norm-of-approximate-solution} we have
$\left|\int_{\R^d}\rho^\eps(t,x) \fhi(x) \, dx\right| \le \|\rho_0\|_2 \cdot \|\fhi\|_2$,
by the dominated convergence the left-hand side of \eqref{Fubini-trick}
converges to $\int_I \int_{\R^d} r_t(x) \fhi(x) \, dx \, \psi(t) \, dt$.
On the other hand, by weak convergence of $\rho^\eps$ to $\rho$
in $L^2(I\times \R^d)$ the right-hand side of \eqref{Fubini-trick}
converges to $\iint \rho(t,x) \fhi(x) \psi(t) \, dt \, dx$.
Hence, passing to the limit in \eqref{Fubini-trick} and applying Fubini theorem
once again we obtain
\begin{equation*}
\int_I \int_{\R^d} r_t(x) \fhi(x) \, dx \, \psi(t) \, dt
= \int_I \int_{\R^d} \rho(t,x) \fhi(x) \, dt \, \psi(t) \, dx.
\end{equation*}
By arbitrariness of $\fhi$ and $\psi$ this implies that
\begin{equation*}
r_t = \rho(t,\cdot) \qquad \text{a.e. on } \R^d
\end{equation*}
for a.e. $t\in I$.
Then the function $\widetilde{\rho}(t, x) := r_t(x)$ belongs to $L^\infty(I;L^2(\R^d)) \cap C_w(\overline{I}; L^2(\R^d))$ and by \eqref{CE-D'-partial} for all $t$ we have
\begin{equation*}
	\int_{\R^d} \widetilde{\rho}(t,x) \fhi(x)\, dx = \int_{\R^d} \rho_0(x) \fhi(x) \, dx + \int_s^t \int_{\R^d} \widetilde{\rho}(\tau, x) \v(x) \cdot \nabla \fhi(x) \, dx \, d\tau.
\end{equation*}
Hence $\widetilde{\rho}|_s = \rho_0$.
Thus $\widetilde{\rho}(t, x) := r_t(x)$ belongs to $L^\infty(I;L^2(\R^d)) \cap C_w(\overline{I}; L^2(\R^d))$ and solves~\eqref{CE} with the initial condition $\rho_0$.
\end{proof}

In particular, for $s=0$ Theorem~\ref{existence-of-weak-solutions}
provides existence of weak solutions with the initial condition $\rho_0$.

\subsection{Renormalization property}\label{section-renorm-prop}

\begin{definition}
	We say that a function $\beta\colon\R\to\R$ is
	a renormalizer (or admissible function, cf.
\cite[II.3]{DiPernaLions1989})
	if it satisfies the following conditions:
	
	\begin{enumerate}
		\item $\beta\in C^1(\R)$,
		
		\item $\beta'\in L^{\infty}(\R)$,
		
		\item $\beta(0)=0$.
	\end{enumerate}
\end{definition}

\begin{remark}\label{renorm_is_L2}
	Let $E \subset \R^n$ be a measurable set.
	If $\beta$ is a renormalizer, then $\beta\circ u\in L^2(E)$ for any $u\in L^2(E)$. Indeed, $\beta\circ u$ is Lebesgue measurable since $\beta$ is continuous and $u$ is measurable, while $\|\beta\circ u\|\le\|\beta'\|_{\infty}\|u\|<\infty$, which follows from the mean value theorem and $\beta(0)=0$.
	Similarly, if $u \in L^1_\loc(E)$ then $\beta\circ u \in L^1_\loc(E)$.
\end{remark}

\begin{definition}\label{def-renorm}
Let $\v\in\boldsymbol{L}^1_\loc(I\times \R^d)$ be a (time-dependent) divergence-free vector field.
Let $I \subset \R$ be a nonempty open interval.
We say that $\v$ has the renormalization property in the class $X \subset L^1_\loc(I\times \R^d)$
if for any solution $\rho \in X$ of \eqref{CE} for any admissible function $\beta$
the function $\beta\circ \rho$ solves \eqref{CE}, i.e.
\begin{equation}\label{renorm}
\d_t \beta(\rho) + \dive (\beta(\rho) \v) = 0.
\end{equation}
\end{definition}

% !TeX root = skew-adjoint.tex

Note that usually in the definition of renormalization property one requires that the initial condition for $\beta(\rho)$ is $\beta(\rho_0)$, where $\rho_0$ is the initial condition for $\rho$.
However, for autonomous~$\vi$ such property will follow from the weaker Definition~\ref{def-renorm}, as we will see.

Renormalization property allows to deduce uniqueness of weak solutions of \eqref{CE}
and to improve regularity of the traces given by Proposition~\ref{weakly-cont-version}:

\begin{theorem}\label{renormalized-solutions}
	Let $I\subset \R$ be an open interval.
	Let $\v \in \boldsymbol{L}^2(\R^d)$ be a compactly supported divergence-free
	vector field and suppose that $\v$ has the renormalization property in $L^\infty(I; L^2(\R^d))$.
	Then for any $s\in \overline{I}$ and any $\rho_0 \in L^2(\R^d)$ the continuity
	equation has at most one weak solution $\rho \in L^\infty(I; L^2(\R^d))$
	such that $\rho|_s = \rho_0$.
\end{theorem}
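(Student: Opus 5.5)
By linearity it suffices to show that a solution $\rho\in L^\infty(I;L^2(\R^d))$ with $\rho|_s=0$ vanishes. The natural tool is an energy argument with the renormalizer $\beta(r)=r^2$. That function is not admissible, since $\beta'$ is unbounded, so I will use a truncated version: for $M>0$ let $\beta_M$ be $C^1$, even, equal to $r^2$ on $[-M,M]$, with bounded derivative and $0\le\beta_M(r)\le r^2$, $\beta_M\uparrow r^2$ as $M\to\infty$. By Remark~\ref{renorm_is_L2}, and because $|\beta_M(r)|\le C_M|r|$, we get $\beta_M(\rho)\in L^\infty(I;L^2)$. By the renormalization property $\beta_M(\rho)$ solves \eqref{CE}. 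Being in $L^\infty(I;L^2)$, it has a weakly continuous version by Proposition~\ref{weakly-cont-version}. Moreover $\beta_M(\rho)\ge 0$ and $\beta_M(\rho)\in L^\infty(I;L^1)$.

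Step 1: integrate in space. I test \eqref{CE-int-by-parts} for $\beta_M(\rho)$ with $\fhi_R(x)=\fhi(x/R)$, where $\fhi\equiv1$ near the support of $\v$. Since $\v$ is compactly supported, $\nabla\fhi_R=0$ on $\supp \v$ once $R$ is large. The flux term therefore vanishes identically, and $\langle\beta_M(\rho)|_t,\fhi_R\rangle$ is constant in $t\in\overline I$. Letting $R\to\infty$ (monotone convergence, after choosing $\fhi$ nonnegative and increasing in $R$) shows that the total mass $\int\beta_M(\rho)|_t\,dx$ is constant in $t$.

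Step 2: relate the trace of $\beta_M(\rho)$ to $\rho|_s=0$. This is the main obstacle, because the renormalization definition says nothing about initial values. My approach is to show that the trace $\beta_M(\rho)|_s$ vanishes. Take $t_n\to s$ with $t_n\notin N$, so that $\rho(t_n)=\rho|_{t_n}\rightharpoonup\rho|_s=0$ weakly in $L^2$. For $\beta_M(\rho(t_n))$, take weak limits in $L^2$ on compact sets along a subsequence, giving a limit $\gamma\ge0$. This $\gamma$ equals $\beta_M(\rho)|_s$ in distributions by Remark~\ref{rem-trace-on-closure}.

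To show $\gamma=0$ I use the convexity trick: apply the same renormalization to $\beta_M(\rho)$ paired with a concave or convex comparison. Concretely, pick the admissible function $\beta(r)=r$ together with $\beta_M$, and exploit the convexity of $\beta_M$ near $0$. Weak lower semicontinuity only gives $\gamma\ge\beta_M(0)=0$, which is the wrong direction. So instead I will run Step 1 forward from interior times. For $t_0\notin N$ and $t\in I$, the identity $\int\beta_M(\rho(t))=\int\beta_M(\rho(t_0))$ holds. Combined with weak continuity of $\rho|_t$ and lower semicontinuity, this gives $\int\beta_M(\rho|_s)\le\liminf\int\beta_M(\rho(t_n))=\text{const}$. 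This is again the wrong direction.

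The resolution I will try is the extension of Remark~\ref{extension-with-zero}. Extend $\rho$ by $0$ beyond $s$ (with $\v$ unchanged, which is allowed since $\v$ is autonomous and $\rho|_s=0$). The resulting function is a solution in $L^\infty(\tilde I;L^2)$ on a larger interval $\tilde I\ni s$; if $s$ is an endpoint, I first shift and extend the interval. Renormalization on the larger interval (valid, after rescaling time, since the property for autonomous $\v$ is translation invariant and I can work on a subinterval of length $|I|$ containing $s$ in its interior) makes $\beta_M(\tilde\rho)$ a solution across $s$. Step 1 then gives a constant total mass across $s$, and that mass is $0$ on the zero side.

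Step 3: conclude. We get $\int\beta_M(\rho(t))\,dx=0$ for a.e. $t$, so $|\rho|\wedge M=0$ a.e. for every $M$, hence $\rho=0$. The technical points to verify are the admissibility of the time-interval rescaling in Step 2 and the trace bookkeeping around $s$.
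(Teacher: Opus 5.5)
Your argument is correct and is essentially the paper's: the resolution you settle on in Step~2 is exactly the paper's use of $\rho_\pm$, together with the time-translation trick for $s\in\d I$. That resolution replaces $\rho$ by $0$ on one side of $s$ (still a solution because $\rho|_s=0$), renormalizes on a translate of $I$ containing $s$ in its interior, and uses conservation of $\int\beta(\cdot)\,dx$ obtained by testing with a cutoff equal to $1$ near $\supp\v$. The differences are cosmetic: you keep the truncated quadratics $\beta_M$ and let $M\to\infty$ only at the end, rather than first passing to $\beta(r)=r^2$; you should state explicitly that for interior $s$ both sides are handled (the paper's $\rho_+$ and $\rho_-$); and the exploratory detours at the start of Step~2 can be dropped.
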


\begin{proof}
	First let us suppose that $s$ is an interior point of $I$.
	By linearity of the continuity equation without loss of generality we may assume that $\rho_0 =0$.
	Let $\rho \in L^\infty(I; L^2(\R^d))$ solve \eqref{CE} with $\rho|_s = 0$.
	Then
	\begin{equation*}
		\rho_+(t,x) := \begin{cases}
			0, & t \le s, \\
			\rho(t,x), & t > s
		\end{cases},
		\qquad
		\rho_-(t,x) := \begin{cases}
			\rho(t,x), & t \le s, \\
			0, & t > s.
		\end{cases}
	\end{equation*}
	also solve \eqref{CE} with $\rho_+|_s = 0 = \rho_-|_s$.
	
	By the renormalization property for any admissible function $\beta$
	the function $\beta\circ \rho_+$ solves~\eqref{CE}.
	Since $\v$ has compact support, we can write~\eqref{CE-int-by-parts}
	with a test function $\fhi$ equal to 1 in some neighborhood of the support of $\v$:
	\begin{equation*}
		\int_{\R^d} \beta(\rho_+(t,x)) \fhi(x) \, dx = \int_{\R^d} \beta(\rho_+(\tau,x)) \fhi(x) \, dx
	\end{equation*}
	for a.e. $t,\tau \in I$. Approximating the function $\beta(t)=t^2$ with a sequence of admissible functions
	and passing to the limit, we obtain
	\begin{equation*}
		\int_{\R^d} \rho_+^2(t,x) \fhi(x) \, dx = \int_{\R^d} \rho_+^2(\tau,x) \fhi(x) \, dx
	\end{equation*}
	for a.e. $t, \tau \in I$.
	Substituting $\fhi(x) = \zeta(|x|/R)$, where $\zeta$ is a smooth cutoff function
	equal to 1 on $(-1, 1)$ and equal to 0 on $\R \setminus (-2, 2)$, and passing to the limit
	as $R \to \infty$, we obtain
	\begin{equation*}
		\int_{\R^d} \rho_+^2(t,x) \, dx = \int_{\R^d} \rho_+^2(\tau, x) \, dx.
	\end{equation*}
	Since $\rho_+(\tau, \cdot) = 0$ for all $\tau \le s$, it follows that $\rho_+(t,\cdot) = 0$ for a.e. $t>s$.
	Thus $\rho(t, \cdot) = 0$ for a.e. $t>s$.
	Arguing similarly for $\rho_-$ we conclude that $\rho(t, \cdot) = 0$ for a.e. $t\le s$.
	Therefore $\rho(t,\cdot) = 0$ for a.e. $t$, hence uniqueness follows.
	
	Now let us suppose that $s \in \d I$.
	Using $\rho|_s$ as the initial or final condition, one can extend the solution $\rho$ in some open neighborhood of $\overline{I}$ using Theorem~\ref{existence-of-weak-solutions}.
	Since $\vi$ is steady, the solution can be translated forward or backward in time,
	and this reduces the problem to the previous case.
\end{proof}

Under the assumptions of previous theorem one can prove that the map $t\mapsto \rho|_t$ is not only weakly continuous, but also strongly continuous and, furthermore, the function $t\mapsto \|\rho|_t\|$ is constant (for \emph{all} values of $t$).
But in fact these two properties follow merely from the conclusion of Theorem~\ref{renormalized-solutions}:

\begin{theorem}\label{uniqueness-implies-strong-continuity}
	Let $I\subset \R$ be an open interval.
	Let $\v \in \boldsymbol{L}^2(\R^d)$ be a compactly supported divergence-free
	vector field.
	Suppose that for any $s\in I$ and any $\rho_0 \in L^2(\R^d)$ the continuity
	equation has at most one weak solution $\rho \in L^\infty(I; L^2(\R^d))$
	such that $\rho|_s = \rho_0$.
	Then for any $\rho_0 \in L^2(\R^d)$ the continuity equation has a unique solution $\widetilde{\rho} \in L^\infty(\R; L^2(\R^d)) \cap C_w(\R; L^2(\R^d))$ such that $\widetilde{\rho}|_0 = \rho_0$. Furthermore,
	\begin{enumerate}
		\item[($\ast$)] the function $t\mapsto \widetilde{\rho}(t,\cdot)$ is strongly continuous in $L^2(\R^d)$;
		\item[($\ast\ast$)] the function $t\mapsto \|\widetilde{\rho}(t,\cdot)\|$ is constant.
	\end{enumerate}
\end{theorem}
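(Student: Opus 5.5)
The plan is to build $\widetilde\rho$ from Theorem~\ref{existence-of-weak-solutions}, and then get both the norm identity and strong continuity from uniqueness, time reversal and a duality argument.

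\emph{Existence and uniqueness on $\R$.} For every $T>0$, Theorem~\ref{existence-of-weak-solutions} applied on $(-T,T)$ with $s=0$ gives a solution $\rho^T\in L^\infty\cap C_w$ with $\rho^T|_0=\rho_0$ and $\|\rho^T(t)\|\le\|\rho_0\|$ for all $t$. Since $\v$ is autonomous, time translations of solutions are again solutions. Hence the uniqueness hypothesis on $I$, with $I$ translated so that it contains the relevant times, holds on every bounded interval of the same length. Covering $(-T,T)$ by overlapping translates and gluing, I conclude that solutions on $(-T,T)$ with a prescribed trace at any point are unique. Therefore the $\rho^T$ are compatible as $T$ grows, and they define a global solution $\widetilde\rho\in L^\infty(\R;L^2)\cap C_w(\R;L^2)$ with $\|\widetilde\rho(t)\|\le\|\rho_0\|$. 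This solution is unique by the same gluing argument.

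\emph{Norm preservation ($\ast\ast$).} Define $S(t)\rho_0:=\widetilde\rho(t)$. By uniqueness and translation invariance, $S$ is a linear group on $L^2$ with $S(t+\tau)=S(t)S(\tau)$, and each $S(t)$ is a contraction. Then
\[
\|\rho_0\|=\|S(-t)S(t)\rho_0\|\le\|S(t)\rho_0\|\le\|\rho_0\|,
\]
so $\|S(t)\rho_0\|=\|\rho_0\|$ for every $t$. This gives ($\ast\ast$) at every time, because the weakly continuous version is defined pointwise.

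\emph{Strong continuity ($\ast$).} Weak continuity together with a constant norm implies strong continuity:
\[
\|\widetilde\rho(t)-\widetilde\rho(t_0)\|^2=2\|\rho_0\|^2-2\langle\widetilde\rho(t),\widetilde\rho(t_0)\rangle\to0 \quad\text{as } t\to t_0.
\]

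The main obstacle is the gluing step. One has to check that when two solutions agree at a single time on overlapping intervals, they agree on the whole overlap. The uniqueness hypothesis must be used on subintervals (by restriction, together with extension via Theorem~\ref{existence-of-weak-solutions}), and this requires care with traces at endpoints. The argument follows the reduction at the end of the proof of Theorem~\ref{renormalized-solutions}. Suppose a solution $\rho$ lives on a subinterval $J\subset I$. Extend it to all of $I$ by attaching, at a point of $J$, solutions from Theorem~\ref{existence-of-weak-solutions} on the remaining parts. The glued function still solves \eqref{CE}, by Remark~\ref{extension-with-zero}-type reasoning, since the traces match. Then apply uniqueness on $I$.
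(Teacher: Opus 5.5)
Your proposal is correct and follows the paper's route. You build the solution with Theorem~\ref{existence-of-weak-solutions} and extend it to all of $\R$ using uniqueness and time-translation invariance. You then reverse the bound $\|\widetilde\rho(t)\|\le\|\rho_0\|$; your identity $\|\rho_0\|=\|S(-t)S(t)\rho_0\|$ is exactly the paper's ``interchanging $t$ and $s$'' step. Finally you get strong continuity from weak continuity plus a constant norm.

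Your gluing step is more explicit than the paper's, which simply asserts the unique extension. One correction when you write it out: attach the extensions at the endpoints of $J$, not at an interior point, so the glued function really extends $\rho$. The traces there lie in $L^2$ by Proposition~\ref{weakly-cont-version}.
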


\begin{proof}
	Let $J \subset \R$ be an open interval such that $J \cap I \ne \emptyset$.
	Fix $s\in J \cap I$.
	Then by Theorem~\ref{existence-of-weak-solutions} there exists a solution $\varrho\in L^\infty(J\cup I; L^2(\R^d)) \cap C_w(\overline{J\cup I}; L^2(\R^d))$ of the continuity equation with $\varrho|_{s} = \rho|_s$ and $\|\varrho(t, \cdot)\| \le \|\rho|_s\|$ for all $t\in J\cup I$.
	By the uniqueness assumption we have $\varrho = \rho$ on~$I$.
	Since $\v$ is autonomous, the assumptions imply uniqueness of the solution on any other bounded open interval $I' \subset \R$ of the same length as $I$.
	Thus $\rho$ can be extended in a unique way to a solution $\widetilde{\rho} \in L^\infty(\R; L^2(\R^d)) \cap C_w(\R; L^2(\R^d))$, and for any $t \in \R$ we have the estimate $\|\widetilde{\rho}|_t\| \le \|\widetilde{\rho}|_s\|$.
	Interchanging $t$ and $s$ we obtain $\|\widetilde{\rho}|_s\| \le \|\widetilde{\rho}|_t\|$, hence $\|\widetilde{\rho}|_t\| = \|\widetilde{\rho}|_s\|$, i.e. $(\ast\ast)$ holds.
	It is known that if a sequence $x_n$ converges weakly to $x$ in a Hilbert space and $\|x_n\|\to \|x\|$ as $n\to \infty$, then $x_n$ converges to $x$ strongly.
	Hence ($\ast$) follows.
\end{proof}

Let us note that under the assumptions of Theorem~\ref{uniqueness-implies-strong-continuity} the weakly continuous version~$\widetilde{\rho}$ of the solution~$\rho$ becomes strongly continuous.

\begin{corollary}
	Under the assumptions of Theorem~\ref{renormalized-solutions}
	for any solution $\rho \in L^\infty(I; L^2(\R^d))$
	of~\eqref{CE} for any admissible $\beta$ and for any $t\in \overline{I}$
	we have $\beta(\rho)|_t = \beta(\rho|_t)$.
\end{corollary}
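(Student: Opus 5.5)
The plan is to reduce everything to the strong continuity supplied by Theorem~\ref{uniqueness-implies-strong-continuity}. Let $\rho \in L^\infty(I; L^2(\R^d))$ solve \eqref{CE} and let $\beta$ be admissible. By Remark~\ref{renorm_is_L2} we have $\|\beta(\rho(t,\cdot))\| \le \|\beta'\|_\infty \|\rho(t,\cdot)\|$, so $\beta\circ\rho \in L^\infty(I; L^2(\R^d))$. By the renormalization property it solves \eqref{CE}. Hence Proposition~\ref{weakly-cont-version} gives its trace $\beta(\rho)|_t$ for every $t\in\overline{I}$, and $t\mapsto \beta(\rho)|_t$ is weakly continuous on $\overline I$. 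By Theorem~\ref{renormalized-solutions} the hypotheses of Theorem~\ref{uniqueness-implies-strong-continuity} hold. Applying it to $\rho$ (with the initial datum $\rho|_{s_0}$ at some $s_0\in I$, and using translation in time), the weakly continuous version $t\mapsto \rho|_t$ coincides on $\overline I$ with a strongly continuous map into $L^2(\R^d)$.

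Next I compare the two maps $t\mapsto\beta(\rho)|_t$ and $t\mapsto\beta(\rho|_t)$. First, by Remark~\ref{rem-trace-and-solution}, for a.e. $t\in I$ we have $\rho|_t=\rho(t,\cdot)$ and $\beta(\rho)|_t=\beta(\rho(t,\cdot))$. Hence the two maps agree for a.e. $t$. Second, the map $u\mapsto \beta\circ u$ is Lipschitz on $L^2(\R^d)$ with constant $\|\beta'\|_\infty$, by the mean value theorem as in Remark~\ref{renorm_is_L2}. Composing with the strongly continuous $t\mapsto\rho|_t$ therefore shows that $t\mapsto\beta(\rho|_t)$ is strongly, hence weakly, continuous on $\overline I$.

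Now take any $t\in\overline I$ and a sequence $t_n\to t$ of points where the two maps agree. Then $\beta(\rho)|_{t_n}\to\beta(\rho)|_t$ weakly, by weak continuity of the trace. Also $\beta(\rho|_{t_n})\to\beta(\rho|_t)$ strongly. Uniqueness of weak limits gives $\beta(\rho)|_t=\beta(\rho|_t)$.

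The main obstacle is the step invoking strong continuity on all of $\overline I$, including the endpoints. Theorem~\ref{uniqueness-implies-strong-continuity} produces a global solution $\widetilde\rho$ on $\R$ with $\widetilde\rho|_{s_0}=\rho|_{s_0}$. By uniqueness (Theorem~\ref{renormalized-solutions}) $\widetilde\rho=\rho$ a.e. on $I$, so the traces coincide for all $t\in\overline I$, since both are continuous in $\mathscr D'$ by Remark~\ref{rem-trace-and-solution}. Strong continuity of $\widetilde\rho$ then transfers to $t\mapsto\rho|_t$ on $\overline I$. I expect this identification to be the only delicate point.
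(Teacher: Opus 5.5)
Your proof is correct and follows the same route as the paper. Strong continuity of $t\mapsto\rho|_t$ (from Theorem~\ref{uniqueness-implies-strong-continuity}) together with the $L^2$-Lipschitz property of $u\mapsto\beta\circ u$ makes $t\mapsto\beta(\rho|_t)$ a continuous version of $\beta(\rho)$, which therefore equals the trace $\beta(\rho)|_t$ for every $t\in\overline{I}$. You spell out steps the paper leaves implicit: that $\beta(\rho)$ solves \eqref{CE} so its trace exists, the a.e.\ agreement plus the limiting argument, and the identification of traces up to the endpoints.
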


Indeed, since any admissible $\beta$ is Lipschitz, strong continuity of $t\mapsto \widetilde{\rho}(t, \cdot)$
implies that $t \mapsto \beta(\widetilde{\rho}(t, \cdot))$ is strongly continuous as well.
In other words, if $\widetilde{\rho}$ is the strongly continuous version of the solution $\rho$,
then $\beta(\widetilde{\rho})$ is the strongly continuous version of $\beta(\rho)$.
Therefore $\beta(\rho)|_{t} = \beta(\widetilde{\rho}(t, \cdot)) = \beta(\rho|_t)$.

\section{Forward and backward uniqueness implies skew-adjointness}\label{sec:uniq-skadj}

% !TeX root = skew-adjoint.tex

Theorem~\ref{uniqueness-implies-strong-continuity} implies that if the weak solutions of the Cauchy problem for the continuity equation are unique both forward and backward in time, then such solutions form a strongly continuous group $\rho|_t = S_t \rho|_0$, and such group is unique.
In this section we prove that under such assumptions the operator $A_0 = \vi \cdot \nabla$ is essentially skew-adjoint.

First we prove that for any semigroup of solutions of the continuity equation the adjoint operator $A_0^*$ is an extension of its generator (for bounded $\vi$ such result was established in \cite[Lemma~3.1]{Panov2015}, and recently it was generalized to the abstract setting in \cite[Theorem 2.1]{Panov2025}):

\begin{proposition}
	\label{adjoint-operator-extends-generator}
	Let $\v\in \boldsymbol{L}^2(\R^d)$ be a divergence-free vector field.
	Let $A$ denote the closure of the operator $A_0 \colon L^2(\R^d) \to L^2(\R^d)$
	given by $A_0(\rho) = \v \cdot \nabla \rho$
	with the domain $D(A_0) = C_c^\infty(\R^d)$.
	Let $G$ be an infinitesimal generator of $C_0$-semigroup $S_t$
	in $L^2(\R^d)$. Then the following properties are equivalent:
	\begin{enumerate}
		\item[\emph{(i)}]
		for any $\rho_0 \in L^2(\R^d)$
		the function $\rho(t,\cdot) = S_t \rho_0$ is a weak solution of
		\eqref{CE} on $(0,+\infty)$; 
		\item[\emph{(ii)}] $G \subset A^*$.
	\end{enumerate}
\end{proposition}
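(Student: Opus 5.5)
First I will identify $A^*$ concretely. Since $\dive \v = 0$, for $\fhi\in C_c^\infty(\R^d)$ and $u\in L^2(\R^d)$ we have $\int u\,\v\cdot\nabla\fhi\,dx = -\langle \dive(u\v),\fhi\rangle$. Hence $u\in D(A^*)=D(A_0^*)$ if and only if there is $w\in L^2(\R^d)$ with $\int u\,\v\cdot\nabla \fhi\,dx = \int w\fhi\,dx$ for all $\fhi\in C^\infty_c(\R^d)$. In that case $A^*u=-w$, i.e. $A^* u = -\dive(u\v)$ in the sense of distributions. Note that $u\v\in L^1$, since $u,\v\in L^2$, so the left-hand side makes sense. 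By density of $C^\infty_c$ in $C^1_c$ (with $\v\in L^2$), the same identity then holds for all $\fhi\in C^1_c(\R^d)$.

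\emph{(ii)$\Rightarrow$(i).} Let $\rho_0\in D(G)$. Then $t\mapsto S_t\rho_0$ is $C^1$ in $L^2$ with derivative $GS_t\rho_0 = A^*S_t\rho_0$. Pairing with $\fhi\in C_c^1$ gives
\[
\frac{d}{dt}\int S_t\rho_0\,\fhi\,dx = \int S_t\rho_0\,\v\cdot\nabla\fhi\,dx .
\]
This is \eqref{CE-sep-var}, so by Proposition~\ref{prop-CE-int} the function $S_t\rho_0$ is a weak solution. For general $\rho_0\in L^2$, take $\rho_0^n\in D(G)$ with $\rho_0^n\to\rho_0$. Then $S_t\rho_0^n\to S_t\rho_0$ in $L^2$ uniformly on bounded time intervals (the semigroup is locally uniformly bounded), so the integral identity \eqref{CE-int} passes to the limit, using $\v\cdot\nabla\fhi\in L^2$. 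Measurability of $(t,x)\mapsto S_t\rho_0(x)$ and local integrability follow from strong continuity.

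\emph{(i)$\Rightarrow$(ii).} Let $\rho_0\in D(G)$ and $\rho(t)=S_t\rho_0$. By (i) and Proposition~\ref{weakly-cont-version}, applied with the continuous version, for all $t\ge0$ and $\fhi\in C_c^1$,
\[
\int S_t\rho_0\,\fhi\,dx-\int\rho_0\fhi\,dx = \int_0^t\int S_\tau\rho_0\,\v\cdot\nabla\fhi\,dx\,d\tau .
\]
To use this at $t=0$ I rely on strong continuity of $S_t$ to identify the trace $\rho|_0$ with $\rho_0$. Divide by $t$ and let $t\to0^+$. The left side tends to $\int G\rho_0\,\fhi\,dx$. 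The right side tends to $\int\rho_0\,\v\cdot\nabla\fhi\,dx$, because $\tau\mapsto \int S_\tau\rho_0\,\v\cdot\nabla\fhi\,dx$ is continuous, $\v\cdot\nabla\fhi$ being in $L^2$. Thus $\int\rho_0\,\v\cdot\nabla\fhi\,dx=\int G\rho_0\,\fhi\,dx$ for all $\fhi\in C_c^\infty$. Since $A_0\fhi=\v\cdot\nabla\fhi$, this says $\langle A_0\fhi,\rho_0\rangle=\langle\fhi,G\rho_0\rangle$, hence $\rho_0\in D(A_0^*)$ and $A^*\rho_0=A_0^*\rho_0=G\rho_0$.

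The main technical point is the boundary behaviour at $t=0$. The weak-solution notion only lives on the open interval $(0,\infty)$, so I must check that the trace at $0$ from Definition~\ref{def-trace} coincides with $\rho_0$. This holds because $S_t\rho_0\to\rho_0$ strongly and the trace is the weak limit along a.e. times, as in Proposition~\ref{weakly-cont-version}. The remaining steps are routine.
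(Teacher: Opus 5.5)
Your proposal is correct and follows essentially the same route as the paper. For (ii)$\Rightarrow$(i) you differentiate $t\mapsto(S_t\rho_0,\fhi)$ for $\rho_0\in D(G)$, then pass to general $\rho_0$ using density and local boundedness of $\|S_t\|$. For (i)$\Rightarrow$(ii) you use the integral identity of Proposition~\ref{weakly-cont-version}, identify the trace at $0$ with $\rho_0$ by strong continuity, and differentiate at $t=0$.

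There is one sign slip in your preliminary paragraph. With $w$ defined by $\int u\,\v\cdot\nabla\fhi\,dx=\int w\fhi\,dx$, one has $A^*u=w$, not $-w$. This is what agrees with $A^*u=-\dive(u\v)$ and with the sign you actually use later, so nothing downstream is affected.
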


\begin{proof}
	\emph{$(i)\Rightarrow(ii)$.} Assume that the function $\rho(t,\cdot)=S_t\rho_0$ for each $\rho_0\in L^2(\R^d)$ is a weak solution of~\eqref{CE}. 
	If $\rho_0\in D(G)$, then $\rho\in C^1([0;+\infty), L^2(\R^d))$, and $\rho'(0)=G\rho_0$. 
	This implies that for each function $\fhi\in C^\infty_c(\R^d)$ the scalar function
		$J(t) = \int_{\R^d}\rho(t,x)\fhi(x)\,dx=(\rho(t,\cdot),\fhi)_2$
	belongs to $C^1[0;+\infty)$, and 
	\begin{equation}\label{J-prime}
		J'(0)=(\fhi, G\rho_0)_2.
	\end{equation}
	
	By continuity of $t\mapsto \rho(t, \cdot)$ the equality $\rho|_t = \rho(t,\cdot)$ holds for \emph{all} $t\in [0,+\infty)$ (see remark after Definition~\ref{def-weakly-cont-version}). Then by Proposition~\ref{weakly-cont-version} for all $t\in [0,+\infty)$ and for all $\fhi \in C^\infty_c(\R^d)$ we have
	\begin{equation*}
		J(t) - \int_{\R^d} \rho_0(x) \fhi(x) \, dx
		= \int_0^t (A_0 \fhi, \rho(t,\cdot))_2 \, dt.
	\end{equation*}
	Hence (by continuity of $t\mapsto (A_0 \fhi, \rho(t,\cdot))_2$)
	\begin{equation*}
		J'(0) = (A_0 \fhi, \rho(0,\cdot))_2 = (A_0 \fhi, \rho_0)_2.
	\end{equation*}
	Then by \eqref{J-prime}
	\begin{equation*}
		(A_0\fhi, \rho_0)_2 = (\fhi, G\rho_0)_2.
	\end{equation*}
	Since $\fhi\in C^\infty_c(\R^d)$ was arbitrary, by the definition of adjoint operator it follows that $\rho_0\in D(A^*)$ and $A^*\rho_0=G\rho_0$. 
	Hence $G\subset A^*$.
	
	\emph{$(ii)\Rightarrow (i)$.}
	Assume that $G\subset A^*$ and $\rho_0\in D(G)$. Then $\rho(t,\cdot)=S_t\rho_0$ is a $C^1$-function with values in $L^2(\R^d)$ such that $\d_t \rho = GS_t\rho_0 = G\rho(t, \cdot)$. This implies that for arbitrary $\fhi\in C^\infty_c(\R^d)$ and $t\geq0$
	\begin{equation*}
		\d_t (\rho(t,\cdot), \fhi)_2=(G\rho(t,\cdot), \fhi)_2=(A^*\rho(t,\cdot), \fhi)_2=(\rho(t,\cdot), A_0\fhi)_2,
	\end{equation*}
	where $A_0\fhi=\v \cdot \nabla \fhi$, that is,
	\begin{equation*}
		\d_t \int_{\R^d}\rho(t, x)\fhi(x)\,dx-\int_{\R^d}\rho(t,x)\v(x) \cdot \nabla \fhi(x)\,dx=0.
	\end{equation*}
	Of course, the equality above holds also for any $\fhi \in C^1_c(\R^d)$, since any such function can be approximated by functions from $C^\infty_c(\R^d)$.
	Hence, by Proposition~\ref{equivalent-def-of-weak-sol} $\rho$ is a weak solution of \eqref{CE}. 
	
	Now, let $\rho_0\in L^2(\R^d)$ and $\rho(t,\cdot)=S_t\rho_0$. Since the generator of a $C_0$-semigroup is dense, we can find a sequence $\{\rho_{0,n}\}\subset D(G)$ converging to $\rho_0$ as $n\to\infty$ in $L^2(\R^d)$. Then by just proven above $\rho_n(t,\cdot)=S_t\rho_{0,n}$ is a weak solution of \eqref{CE}.
	Hence $\rho_n$ satisfies \eqref{CE-D'}, i.e.
	\begin{equation}\label{rho-n-CE-weak-eq}
		\int_0^{+\infty} \int_{\R^d} \rho_n(t,x) \d_t \Phi(t,x) \, dx \,dt
		+ \int_0^{+\infty} \int_{\R^d} \rho_n \v \cdot \nabla_x \Phi(t,x) \, dx \,dt = 0.
	\end{equation}
	By the well-known property of semigroups the map $t\mapsto \|S_t\|$ is locally bounded, hence
	\begin{equation*}
		\|\rho_n(t,\cdot)-\rho(t,\cdot)\|_2\leq \|S_t\|\cdot\|\rho_{0,n}-\rho_0\|_2\to0
	\end{equation*}
	uniformly in $t$ on any segment $[0;T]$. In particular, $\rho_n\to \rho$ as $n\to\infty$ in
	$L^2_{loc}((0;+\infty)\times \R^d)$.
	Passing to the limit as $n\to\infty$ in \eqref{rho-n-CE-weak-eq} we conclude that $\rho$ is a weak solution of \eqref{CE}. 
\end{proof}

\begin{proposition}
	\label{uniqueness-implies-skew-adjointness}
	Let $\v \in \boldsymbol{L}^2(\R^d)$ be a compactly supported divergence-free
	vector field.
	Let $I \subset \R$ be a nonempty bounded open interval and suppose that
	for any $s\in I$ and any $\rho_0 \in L^2(\R^d)$ the continuity
	equation has at most one weak solution $\rho \in L^\infty(I; L^2(\R^d))$
	such that $\rho|_s = \rho_0$.
	Then the operator $A_0 \colon L^2(\R^d) \to L^2(\R^d)$ given by $A_0 = \v \cdot
	\nabla$ with the domain $D(A_0) = C_c^\infty(\R^d)$ is essentially skew-adjoint.
\end{proposition}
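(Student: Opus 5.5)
By Theorem~\ref{uniqueness-implies-strong-continuity}, for every $\rho_0\in L^2(\R^d)$ there is a unique solution $\widetilde\rho\in L^\infty(\R;L^2(\R^d))\cap C_w(\R;L^2(\R^d))$ with $\widetilde\rho|_0=\rho_0$. This solution is strongly continuous and has constant norm. Setting $S_t\rho_0:=\widetilde\rho(t,\cdot)$ for $t\in\R$, the map $S_t$ is linear, by linearity of \eqref{CE} and uniqueness. It is isometric by ($\ast\ast$). The group law $S_{t+s}=S_tS_s$ follows from uniqueness, since $\v$ is autonomous and so translates of solutions in time are again solutions. Each $S_t$ is surjective because $S_{-t}$ is its inverse. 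Hence $\{S_t\}_{t\in\R}$ is a strongly continuous group of unitary operators on $L^2(\R^d)$. By Stone's theorem its generator $G$ is skew-adjoint: $G^*=-G$.

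Next I apply Proposition~\ref{adjoint-operator-extends-generator} twice. The forward semigroup $(S_t)_{t\ge0}$ consists of weak solutions on $(0,+\infty)$, so $G\subset A^*$. The backward semigroup $T_t:=S_{-t}$, $t\ge0$, has generator $-G$. For this one I would use the reflected field: $\rho(-t,\cdot)$ solves \eqref{CE} with $\v$ replaced by $-\v$, whose closure of $-\v\cdot\nabla$ is $-A$. Applying the proposition to $-\v$ gives $-G\subset(-A)^*=-A^*$, i.e. $G\subset A^*$ again. This alone is not enough, so I instead use the density argument below.

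The plan is to show $A^*\subset G$. Since $A_0$ is skew-symmetric ($\dive\v=0$, integration by parts), we have $A\subset -A^*$. From $G\subset A^*$ we get $A^{**}\subset G^*=-G$, i.e. $A\subset -G$, so $-A$ is a restriction of the skew-adjoint $G$. Essential skew-adjointness of $A_0$ means $A=-A^*$. Equivalently, by the basic criterion, $\ker(A^*\mp I)=\{0\}$.

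So let $u\in D(A^*)$ with $A^*u=\lambda u$, $\lambda=\pm1$. Then $\rho(t,\cdot):=e^{\lambda t}u$ satisfies, for every $\fhi\in C_c^\infty(\R^d)$,
\[
\frac{d}{dt}(\rho(t),\fhi)_2=\lambda e^{\lambda t}(u,\fhi)_2=e^{\lambda t}(A^*u,\fhi)_2=e^{\lambda t}(u,A_0\fhi)_2 .
\]
By Proposition~\ref{equivalent-def-of-weak-sol}, $\rho\in L^\infty(I;L^2(\R^d))$ is a weak solution of \eqref{CE} on the bounded interval $I$. By uniqueness it coincides with $S_t u$ (restricted to $I$, with $\rho|_s=e^{\lambda s}u$), so its norm must be constant in $t$. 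But $\|\rho(t)\|=e^{\lambda t}\|u\|$ is constant only if $u=0$. Hence both deficiency spaces are trivial and $A_0$ is essentially skew-adjoint.

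The main obstacle is making this last step rigorous with the given hypotheses. The uniqueness assumption is stated for $s\in I$ with $I$ bounded, and the trace $\rho|_s$ must be identified with $e^{\lambda s}u$; this is fine because $\rho$ is strongly continuous. The norm conservation ($\ast\ast$) must then be applied to the unique extension. Once these are checked, the Stone-theorem step is not even needed, and the argument reduces to the deficiency-index computation above.
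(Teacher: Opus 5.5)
Your proof is correct, but it takes a different route from the paper.

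\textbf{The paper's route.} It uses Proposition~\ref{adjoint-operator-extends-generator} to show that $-G$ is a skew-adjoint extension of $A$. It then invokes von Neumann's parametrization of \emph{all} skew-adjoint extensions of $A$ by isometric isomorphisms $U\colon\mathscr{K}_+\to\mathscr{K}_-$. The extension attached to $-U$ generates a second unitary group. By the converse direction of Proposition~\ref{adjoint-operator-extends-generator}, that group again consists of solutions, so uniqueness forces $U=-U$ and hence $\mathscr{K}_\pm=\{0\}$.

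\textbf{Your route.} You test the deficiency spaces directly: each $u\in\ker(A^*\mp E)$ gives the explicit weak solution $e^{\pm t}u$, whose norm is not constant, contradicting ($\ast\ast$) of Theorem~\ref{uniqueness-implies-strong-continuity}. This needs only the elementary criterion that a closed skew-symmetric $A$ is skew-adjoint if and only if $\ker(A^*\mp E)=\{0\}$. It does not need Stone's theorem, Proposition~\ref{adjoint-operator-extends-generator}, or the classification of extensions, so your detour through $G\subset A^*$ can simply be deleted.

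\textbf{One small fix.} If $0\notin I$, compare $e^{\lambda t}u$ on $I$ with $t\mapsto S_{t-s}(e^{\lambda s}u)$, not with $S_t u$. Alternatively, bypass the group and compare with the norm-nonincreasing solution issued from $s$ given by Theorem~\ref{existence-of-weak-solutions}.

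\textbf{What each approach buys.} That last variant shows an extra advantage of your approach: it needs only the one-sided bound $\|\varrho(t)\|\le\|\varrho|_s\|$ for $t>s$ (resp.\ $t<s$). Hence forward uniqueness alone already gives $\ker(A^*-E)=\{0\}$, and backward uniqueness alone gives $\ker(A^*+E)=\{0\}$. This fits well with the forward-but-not-backward example of Theorem~\ref{fusilli}. The paper's argument instead is phrased entirely through uniqueness of the unitary solution group. It thereby isolates the structural link between skew-adjoint extensions of $A$ and unitary groups of solutions, at the price of heavier machinery.
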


\begin{proof}
	By Theorem~\ref{uniqueness-implies-strong-continuity} 
	the weak solutions $\rho$ of the initial value problem
	for~\eqref{CE} on $\R$ with the initial condition $\rho|_0 = \rho_0$, $\rho_0 \in L^2(\R^d)$, form a $c_0$-unitary group $S_t$ given by $S_t \rho_0 = \rho|_t$.
	Let $G$ denote the generator of this group.
	By Stone's theorem $G^* = -G$.
	
	Let $A$ denote the closure of $A_0$.
	By Proposition~\ref{adjoint-operator-extends-generator}
	we have $G \subset A^*$, hence $A^{**} \subset G^* = -G$.
	Since $A = A^{**}$, it follows that $A \subset -G$,
	i.e. $-G$ is a skew-adjoint extension of $A$.
	
	By the known theory of deficiency indices (see e.g. \cite[Theorem 4.2 and Corollary 3.12]{Arendt2023}\footnote{In the framework of self-adjoint extensions of closed symmetric operators one could also refer to \cite[Corollary after Theorem X.2]{SimonReedV2}. In the framework of skew-adjoint extensions of closed skew-symmetric operators the statement of such result (and its proof) is completely analogous.})
	there is a one-to-one correspondence between the skew-adjoint extensions of the operator $A$ and the isometric isomorphisms between the spaces $\mathscr{K}_+ := \operatorname{Ker}(A^*+E)$ and $\mathscr{K}_- := \operatorname{Ker}(A^*-E)$.
	Let $U$ denote such isometric isomorphism corresponding to $-G$.
	Let $-\tilde G$ denote the skew-adjoint extension of $A$ corresponding to $-U$.
	
	By Stone's theorem $\tilde G$ generates a
	unitary group $\tilde S_t$. %different from $S_t$.
	By Proposition~\ref{adjoint-operator-extends-generator} the group $\tilde S_t$ also solves the continuity equation, i.e. for any $\rho_0 \in L^2(\R^d)$ the function $\tilde \rho(t) := \tilde S_t \rho_0$ solves \eqref{CE} and satisfies $\tilde \rho|_0 = \rho_0$.
	By the uniqueness of solutions to the Cauchy problem for \eqref{CE} we conclude that the semigroups $S_t$ and $\tilde S_t$ coincide.
	Hence their generators, $G$ and $\tilde G$, are equal.
	This implies that $U=-U$, hence $\mathscr{K}_+=\mathscr{K}_-=\{0\}$.
	Therefore $-A = A^*$.
\end{proof}

\section{Skew-adjointness implies chain rule and renormalization}\label{sec:skadj-renorm}

% !TeX root = skew-adjoint.tex

\subsection{Chain rule}
In the smooth setting a simple computation shows that if $\dive(\rho \vi) = f$ then for any $\beta \in C^1(\R)$ we have $\dive (\beta(\rho)\vi) = \beta'(\rho) f$.
We are going to show that such implication is preserved even in the weak setting, if the operator $\vi \cdot \nabla$ is essentially skew adjoint.

\begin{proposition}
	\label{chain-rule-for-skew-adjoint}
	Let $\v\in\boldsymbol{L}^2_{loc}(\R^d)$ be a divergence-free vector field
	such that the corresponding operator $A_0=\v\cdot \nabla$ acting on $L^2(\R^d)$
	with domain $D(A_0)=C^\infty_c(\R^d)$ is essentially skew-adjoint (i.e. $A^*_0=-\overline{A_0}$).
	Let $\beta$ be an admissible function.
	If $\rho\in L^2(\R^d)$ and $f\in L^2(\R^d)$ satisfy
	\begin{equation*}
		\dive (\rho \v)=f \qquad \text{in } \ \mathscr{D}'(\R^d)
	\end{equation*}
	then
	\begin{equation*}
		\dive (\beta(\rho)\v)=\beta'(\rho) f \qquad \text{in }\
		\mathscr{D}'(\R^d).
	\end{equation*}
\end{proposition}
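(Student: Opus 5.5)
Since $\dive(\rho\v)=f$ in $\mathscr{D}'(\R^d)$ means $-\int \rho\,\v\cdot\nabla\fhi\,dx=\int f\fhi\,dx$ for every $\fhi\in C^\infty_c(\R^d)$, i.e. $(\rho, A_0\fhi)_2=-(f,\fhi)_2$, I will first recast the hypothesis in operator language: it says exactly that $\rho\in D(A_0^*)$ and $A_0^*\rho=-f$. By essential skew-adjointness, $A_0^*=-\overline{A_0}$, so $\rho\in D(\overline{A_0})$ and $\overline{A_0}\rho=f$. By definition of the closure there is then a sequence $\rho_n\in C^\infty_c(\R^d)$ with $\rho_n\to\rho$ and $\v\cdot\nabla\rho_n\to f$ in $L^2(\R^d)$. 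Passing to a subsequence I may also assume $\rho_n\to\rho$ a.e.

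For each smooth $\rho_n$ the classical chain rule applies: $\beta(\rho_n)\in C^1_c(\R^d)$ (since $\beta(0)=0$), and $\v\cdot\nabla\beta(\rho_n)=\beta'(\rho_n)\,\v\cdot\nabla\rho_n$ pointwise. Because $\dive\v=0$ and $\beta(\rho_n)$ is $C^1$ with compact support, testing against $\fhi\in C^\infty_c$ gives $-\int\beta(\rho_n)\v\cdot\nabla\fhi\,dx=\int \beta'(\rho_n)(\v\cdot\nabla\rho_n)\fhi\,dx$. This uses $\int \v\cdot\nabla(\beta(\rho_n)\fhi)\,dx=0$, which holds for $C^1_c$ functions by approximating them in $C^1$ by $C^\infty_c$ functions; this is legitimate since $\v\in\boldsymbol{L}^1_{loc}$.

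It remains to pass to the limit in both sides. On the left, $|\beta(\rho_n)-\beta(\rho)|\le\|\beta'\|_\infty|\rho_n-\rho|$, so $\beta(\rho_n)\to\beta(\rho)$ in $L^2$. Since $\v\cdot\nabla\fhi\in L^2$ (as $\v\in\boldsymbol{L}^2_{loc}$ and $\fhi$ is compactly supported), the left side converges to $-\int\beta(\rho)\v\cdot\nabla\fhi\,dx$. On the right I split
\[
\beta'(\rho_n)g_n-\beta'(\rho)f=\beta'(\rho_n)(g_n-f)+(\beta'(\rho_n)-\beta'(\rho))f,
\qquad g_n:=\v\cdot\nabla\rho_n.
\]
The first term tends to $0$ in $L^2$ by boundedness of $\beta'$. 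The second tends to $0$ in $L^2$ by dominated convergence: $\beta'$ is continuous and $\rho_n\to\rho$ a.e., and the term is dominated by $2\|\beta'\|_\infty|f|$. Multiplying by $\fhi\in L^2$ and integrating, the right side converges to $\int\beta'(\rho)f\fhi\,dx$. This yields $\dive(\beta(\rho)\v)=\beta'(\rho)f$ in $\mathscr{D}'$.

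The only delicate point is the identification $A_0^*\rho=-f$ together with the use of $A_0^*=-\overline{A_0}$ to produce the smooth approximating sequence with graph-norm convergence. Everything afterwards is routine; the key detail there is extracting the a.e.-convergent subsequence so that the nonlinear factor $\beta'(\rho_n)$ can be handled by dominated convergence.
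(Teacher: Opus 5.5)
Your proposal is correct and follows essentially the same route as the paper. Both arguments identify $A_0^*\rho=-f$, use $A_0^*=-\overline{A_0}$ to get a graph-norm approximating sequence $\rho_n\in C^\infty_c(\R^d)$ with an a.e.-convergent subsequence, apply the classical chain rule to $\rho_n$, and pass to the limit by splitting off $\beta'(\rho_n)(\v\cdot\nabla\rho_n-f)$ and using dominated convergence. Your extra justification of $\dive(\beta(\rho_n)\v)=\v\cdot\nabla\beta(\rho_n)$ via $C^1_c$ test functions fills in a step the paper leaves implicit.
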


\begin{proof}
	By the definition of an adjoint operator
	\begin{equation*}
		A_0^*u=-\dive(u\v)\quad\forall u\in D(A_0^*)=\{u\in L^2(\R^d) \;:\; \dive(u\v)\in L^2(\R^d)\}, 
	\end{equation*}
	where the divergence operator is understood in the sense of distributions.
	Then $\rho\in D(A^*_0)$ and $A^*_0 \rho=-f$. Since $A_0$ is essentially
	skew-adjoint, it follows that $\rho\in D(\overline{A_0})=D(A^*_0)$ and
	$\overline{A_0}\rho=f$.
	Then by the definition of closure of an operator, there exists a sequence
	$\{\rho_n\}\subset D(A_0)=C^\infty_c(\R^d)$ such that
	\begin{equation}
		\label{graph_convergence}
		\left\{
		\begin{aligned}
			& \rho_n\to \rho\\
			& f_n := A_0 \rho_n=\v\cdot\nabla \rho_n\to f
		\end{aligned}
		\right.
	\end{equation}
	in $L^2(\R^d)$ as $n\to\infty$.
	Since $\rho_n$ converges to $\rho$ strongly, we can extract a subsequence
	(without relabelling) such that $\rho_n$ converges to $\rho$ a.e.
	
	Since $A_0 \rho_n = f_n$ and $\rho_n$ is smooth, we have
	\begin{equation*}
		\dive(\beta(\rho_n) \v) = \v \nabla \beta(\rho_n) = \beta'(\rho_n) \v \nabla
		\rho_n = \beta'(\rho_n) \dive(\rho_n \v) = \beta'(\rho_n) f_n
	\end{equation*}
	(in the sense of distributions).
	Hence
	\begin{equation}
		\label{approx-div-rho_n}
		\dive(\beta(\rho_n) \v) = (\beta'(\rho_n)) (f_n-f) + \beta'(\rho_n) f.
	\end{equation}
	
	By Remark~\ref{renorm_is_L2} $\beta\circ \rho\in L^2(\R^d)$. Then, by the mean
	value theorem and \eqref{graph_convergence}
	\begin{equation*}
		\|\beta\circ \rho_n-\beta\circ
		\rho\|\le\|\beta'\|_{\infty}\|\rho_n-\rho\|\to0,
	\end{equation*}
	hence the left-hand side of \eqref{approx-div-rho_n}
	converges to $\dive (\beta(\rho) \v)$ in the sense of distributions.
	Since $\beta'$ is bounded, we have
	\begin{equation*}
		\|(\beta'(\rho_n)) (f_n-f)\| \le \|\beta'\|_\infty \|f_n - f\| \to 0
	\end{equation*}
	as $n\to \infty$.
	On the other hand, by continuity of $\beta'$ we have $\beta'(\rho_n)\to \beta'(\rho)$ a.e.
	Since $\beta'$ is bounded, then by dominated convergence $\beta'(\rho_n)f\to\beta'(\rho)f$ in $L^2(\R^d)$.
	Thus, passing to the limit in \eqref{approx-div-rho_n},
	we get $\dive(\beta(\rho) \v) = \beta'(\rho) f$.
\end{proof}

\begin{proposition}\label{space-time-skew-adj-and-renorm}
	Let $\v \in L^2(\R^d)$ be a compactly supported divergence-free vector field.
	Suppose that the operator $B_0(\rho) := \d_t \rho + \v \cdot \nabla_x \rho$
	with $D(B_0) = C^\infty_c(\R^{d+1})$ is essentially skew-adjoint in $L^2(\R^{d+1})$.
	Then for any open interval $I\subset \R$ the vector field $\v$
	has the renormalization property in the class $L^2(I \times \R^d)$.
\end{proposition}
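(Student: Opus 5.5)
The idea is to view the space–time field $\Bi := (1, \v)$ on $\R^{d+1}$ as a divergence-free field. A weak solution of \eqref{CE} on $I\times\R^d$ is then just a solution of $\dive_{t,x}(\rho \Bi) = 0$ on the open set $I\times\R^d$. We then apply the chain rule of Proposition~\ref{chain-rule-for-skew-adjoint}, or more precisely its proof, to the operator $B_0 = \Bi\cdot\nabla_{t,x}$.

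Proposition~\ref{chain-rule-for-skew-adjoint} is stated for global $L^2$ functions with $\dive(\rho\Bi)=f\in L^2(\R^{d+1})$. Two points need care. First, $\Bi\notin \boldsymbol{L}^2_\loc$ is not an issue, since $\Bi$ is in $L^2_\loc$ when $\v\in L^2$. Second, $\rho$ is only defined on $I\times\R^d$, so I localize.

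\textbf{Localization.} Let $\rho\in L^2(I\times\R^d)$ solve \eqref{CE}. Fix $\psi\in C_c^\infty(I)$ and set $u(t,x) := \psi(t)\rho(t,x)$, extended by zero outside $I$. Then $u\in L^2(\R^{d+1})$, and in $\mathscr D'(\R^{d+1})$
\begin{equation*}
\dive_{t,x}(u\Bi) = \d_t(\psi\rho) + \dive(\psi\rho\v) = \psi'(t)\rho =: f \in L^2(\R^{d+1}).
\end{equation*}
This follows by testing with $\Phi$ and using $\psi\Phi$ as a test function in \eqref{CE-D'}. Multiplying by a function of $t$ only is harmless: no $x$-cutoff is needed, and the compact support of $\v$ is not even used here. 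In the language of the proof of Proposition~\ref{chain-rule-for-skew-adjoint}, $u\in D(B_0^*)$ with $B_0^*u = -f$. By essential skew-adjointness, $u\in D(\overline{B_0})$. Repeating that proof verbatim with $\R^{d+1}$, $\Bi$, $B_0$ in place of $\R^d$, $\v$, $A_0$ (it only uses skew-adjointness, smoothness of the approximants and $\dive\Bi=0$), we get
\begin{equation*}
\dive_{t,x}(\beta(u)\Bi) = \beta'(u)\,\psi'\rho \qquad\text{in } \mathscr D'(\R^{d+1})
\end{equation*}
for every admissible $\beta$.

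\textbf{Removing $\psi$.} Fix a test function $\Phi\in C^1_c(I\times\R^d)$. Choose $\psi\in C_c^\infty(I)$ with $\psi\equiv1$ on a neighbourhood of the $t$-projection of $\supp\Phi$. On that neighbourhood $u=\rho$ and $\psi'=0$, so the right-hand side vanishes there. Since the identity above is local, $\int\!\!\int \beta(\rho)(\d_t\Phi + \v\cdot\nabla\Phi)\,dx\,dt = 0$. Moreover $\beta(\rho)\in L^2(I\times\R^d)$ by Remark~\ref{renorm_is_L2}, so $\beta(\rho)\v\in L^1_\loc$. Hence $\beta(\rho)$ is a weak solution of \eqref{CE}, which is the renormalization property in $L^2(I\times\R^d)$.

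\textbf{Main obstacle.} The only delicate point is checking that the chain-rule argument transfers to $B_0$. The key ingredient is that for $\rho_n\in C_c^\infty(\R^{d+1})$ we have $\dive(\beta(\rho_n)\Bi) = \beta'(\rho_n)\Bi\cdot\nabla\rho_n$. This needs $\dive_{t,x}\Bi = 0$ distributionally, which holds because $\dive_x\v=0$ and $\v$ is independent of $t$. We also need the identification $D(B_0^*)=\{u\in L^2 : \dive_{t,x}(u\Bi)\in L^2\}$ with $B_0^*u=-\dive_{t,x}(u\Bi)$, which is immediate from the definitions. The distributional limits then go through as in Proposition~\ref{chain-rule-for-skew-adjoint}, since $\beta(\rho_n)\Bi\to\beta(u)\Bi$ in $L^1_\loc$.
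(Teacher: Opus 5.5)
Your proposal is correct and follows the paper's proof essentially step for step: cut off in time with $\psi\in C^\infty_c(I)$, note that $\psi\rho\in L^2(\R^{d+1})$ satisfies $\d_t(\psi\rho)+\dive(\psi\rho\v)=\rho\,\d_t\psi\in L^2(\R^{d+1})$, apply the chain rule of Proposition~\ref{chain-rule-for-skew-adjoint} to $B_0$, and then remove $\psi$ by taking $\psi\equiv 1$ near the time-support of the test function (the paper does this on subintervals $J\Subset I$). Proposition~\ref{chain-rule-for-skew-adjoint} is already stated for $\boldsymbol{L}^2_{\loc}$ divergence-free fields in arbitrary dimension, so it applies directly to $(1,\v)$ on $\R^{d+1}$ without re-running its proof; also, your phrase ``$\Bi\notin\boldsymbol{L}^2_\loc$'' should read ``$\Bi\notin\boldsymbol{L}^2(\R^{d+1})$''.
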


\begin{proof}
	Suppose that $\rho \in L^2(I \times \R^d)$ solves \eqref{CE}.
	Let $\psi \in C^\infty_c(I)$, and let us extend $\rho$ and $\psi$
	with zeroes to $\R^{d+1}\setminus (I \times \R^d)$ and to $\R \setminus I$
	respectively.
	Then the function $\psi \rho$
	belongs to $L^2(\R^{d+1})$ and solves
	\begin{equation}
		\label{time-cutoff-CE}
		\d_t (\psi \rho) + \dive(\psi \rho \v) = \rho \d_t \psi
	\end{equation}
	in $\mathscr{D}'(\R^{d+1})$.
	
	The right-hand side of \eqref{time-cutoff-CE} belongs to
	$L^2(\R^{d+1})$. Hence by Proposition~\ref{chain-rule-for-skew-adjoint}
	for any admissible function $\beta$ we have
	\begin{equation}
		\d_t (\beta(\psi \rho)) + \dive(\beta(\psi \rho) \v) = \beta'(\psi \rho)\rho
		\d_t \psi
	\end{equation}
	For any open interval $J$ with compact closure contained in $I$ there exists a function $\psi\in C^\infty_c(I)$ such that $\psi=1$ on $J$. Then it follows from the above relation that \eqref{renorm} holds in $\mathscr{D}'(J\times\R^d)$.
	By arbitrariness of $J$ we conclude that
	\eqref{renorm}
	holds in $\mathscr{D}'(I\times \R^d)$.
\end{proof}

\subsection{Essential skew-adjointness of the extended operator and renormalization}

From the previous section (see Proposition~\ref{space-time-skew-adj-and-renorm}) we know that if the operator $\d_t + \vi \cdot \nabla$ is essentially skew adjoint, then the renormalization property holds.
In this section we prove that if $\vi \cdot \nabla$ is essentially skew adjoint,
then the <<extended operator>> $\d_t + \vi \cdot \nabla$ is essentially skew adjoint as well.
In fact, we prove this result in a slightly more general setting, when $\vi \cdot \nabla$ is replaced by an abstract essentially skew-adjoint operator.

Let~$H$ be a Hilbert space and let $A_0$ be a densely defined skew-symmetric linear operator on $H$ with the domain $D(A_0)$. Let $A$ be the closure of $A_0$ and let $X$ and $X_0$ denote the spaces $D(A)$ and $D(A_0)$ equipped with the graph norm $\|x\|_H + \|Ax\|_H$ respectively.
Since the operator $A$ is the closure of $A_0$, $X$ is a Banach space and $X_0$ is its dense subspace.
Let $\widetilde{X}:= C^1_c(\R; X)$ and $\widetilde{X}_0:= C^1_c(\R; X_0)$. If 
$u\in \widetilde{X}_0$
then $u'\in C_c(\R; X_0)$ and $Au=\bigl(s\mapsto A(u(s))\bigr)$ belongs to $C^1_c(\R; H)$ and satisfies $(Au)'(s)=A(u'(s))$ in $H$. On the space $\widetilde{H}:=L^2(\R; H)$ we consider the 
densely defined % (by Lemma \ref{dense-subspace})  
operator $\widetilde{A}_0$ given by
\begin{equation}\label{A_0-tilde-def}
	(\widetilde{A}_0 u)(s) := u'(s) + A_0(u(s)), \qquad s\in \R, \qquad u\in D(\widetilde{A}_0) := \widetilde{X}_0.
\end{equation}

In general a sum of two essentially skew-adjoint operators with common dense domain may fail to be essentially skew-adjoint.
For instance, in the case $H=L^2(0,1)$
the operators $Au = u'-u'''$ and $Bu = u'''$ with $D(A)=D(B)=\{u\in H^3(0,1) : u(0)=u(1)=0, \ u'(0)=u'(1)\}$ are skew-adjoint, while their sum is not, since $D((A+B)^*) = H^1(0,1) \ne H_0^1(0,1) = D(\overline{A+B})$
(for self-adjoint operators a similar construction is given in \cite{Renardy2011}).
Another example of such pair of operators, for which the failure of skew-adjointness of their sum is not related to the boundary conditions, is discussed in Remark~\ref{sum-of-skadj-is-not}.
Nevertheless, in the particular case considered in \eqref{A_0-tilde-def} the following result holds:

\begin{proposition}\label{skew-adjoint-extra-dimension}
	If the operator $A_0$ is essentially skew-adjoint,
	then the operator $\widetilde{A}_0$ is essentially skew-adjoint.
\end{proposition}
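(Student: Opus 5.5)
We will use the basic criterion: a densely defined skew-symmetric operator $\widetilde A_0$ is essentially skew-adjoint if and only if $\operatorname{Ker}(\widetilde A_0^* \pm E) = \{0\}$. Equivalently, its closure is skew-adjoint if and only if it generates a unitary group.

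The plan is constructive. Since $A = \overline{A_0}$ is skew-adjoint, Stone's theorem gives a unitary group $T_t = e^{tA}$ on $H$. On $\widetilde H = L^2(\R;H)$ we define
\begin{equation*}
	(\widetilde T_t u)(s) := T_t\, u(s-t).
\end{equation*}
This is a strongly continuous unitary group on $\widetilde H$: unitarity is clear, and strong continuity follows from density of $C_c(\R;H)$ together with uniform boundedness of $\widetilde T_t$. Its generator $\widetilde G$ is skew-adjoint by Stone's theorem. We then show two things.
\begin{enumerate}
	\item[(a)] $\widetilde X_0 \subset D(\widetilde G)$ and $\widetilde G u = -\widetilde A_0 u$ for $u\in\widetilde X_0$ (up to the sign convention, which is fixed by replacing $t$ by $-t$ in the definition of $\widetilde T_t$). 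In particular $\widetilde A_0$ is skew-symmetric and $\overline{\widetilde A_0}\subset -\widetilde G$.
	\item[(b)] $\widetilde X_0$ is a core for $\widetilde G$.
\end{enumerate}
From (a) and (b) we get $\overline{\widetilde A_0} = -\widetilde G$, which is skew-adjoint.

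\emph{Step (a).} For $u\in\widetilde X_0$ we have $u(s)\in D(A)$ and $u\in C^1_c(\R;X)$. Differentiating $t\mapsto T_t u(s-t)$ at $t=0$ pointwise gives $A u(s) - u'(s)$. To upgrade this to convergence in $L^2(\R;H)$, write the difference quotient as
\begin{equation*}
	T_t\,\frac{u(s-t)-u(s)}{t} + \frac{T_t u(s) - u(s)}{t}.
\end{equation*}
The first term converges uniformly in $s$ by uniform continuity of $u'$. For the second term we use
\begin{equation*}
	\frac{T_t x - x}{t} = \frac1t\int_0^t T_\tau A x\,d\tau,
\end{equation*}
which converges uniformly in $s$ because $s\mapsto Au(s)$ is continuous with compact support, hence has compact range. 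All functions involved are supported in a fixed compact set, so uniform convergence implies $L^2$ convergence.

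\emph{Step (b)} is the main obstacle. The standard tool is the core theorem: if $D\subset D(\widetilde G)$ is dense in $\widetilde H$ and invariant under $\widetilde T_t$, then $D$ is a core. The difficulty is that $\widetilde X_0$ is not invariant, since $T_t$ does not preserve $D(A_0)$. So we first apply the theorem to the larger space $\widetilde X = C^1_c(\R;X)$. This space is invariant: $T_t$ maps $X$ into $X$ isometrically in the graph norm, because $AT_t = T_tA$ on $D(A)$, and the shift preserves $C^1_c$. It is dense in $\widetilde H$, and by the same computation as in (a) it lies in $D(\widetilde G)$. Hence $\widetilde X$ is a core for $\widetilde G$.

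It then remains to show that every $u\in\widetilde X$ is a limit of elements $u_n\in\widetilde X_0$ in the graph norm of $\widetilde G$, that is,
\begin{equation*}
	u_n\to u \quad\text{and}\quad u_n' + Au_n \to u' + Au \qquad\text{in } L^2(\R;H).
\end{equation*}
This is exactly what Lemma~\ref{dense-subspace} provides, applied to the dense subspace $X_0$ of $X$. It yields $u_n\in\widetilde X_0$ with supports in a fixed interval $(a,b)$ and $u_n\to u$ in $C^1(\R;X)$. Convergence in $C^1(\R;X)$ controls $\sup_s$ of the differences of $u$, $Au$, $u'$ and $Au'$, so all the required terms converge uniformly on $(a,b)$ and therefore in $L^2$. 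Consequently the closure of $\widetilde A_0$ contains $-\widetilde G$ restricted to the core $\widetilde X$, hence its whole closure $-\widetilde G$. Together with (a) this proves $\overline{\widetilde A_0} = -\widetilde G$, which is skew-adjoint.
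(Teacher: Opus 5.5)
Your proof is correct. It shares its skeleton with the paper's:
\begin{itemize}
\item the same group $(G_tu)(s)=T_t\bigl(u(s-t)\bigr)$ on $L^2(\R;H)$;
\item the identification of its generator with $-\widetilde A_0$ on $C^1$-type functions;
\item Lemma~\ref{dense-subspace}, used to pass from $C^1_c(\R;X)$ to $C^1_c(\R;X_0)$ in the graph norm.
\end{itemize}

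Where you differ is in proving that the generator's domain lies in $D(\overline{\widetilde A_0})$. The paper does this by hand:
\begin{itemize}
\item it establishes $Bu=u'+Au$ on the larger space $F=W^{1,2}(\R;H)\cap L^2(\R;X)$;
\item it shows $F\subset D(\widetilde A)$ by mollification and cutoff;
\item it uses the Laplace-transform formula for $(E+B)^{-1}$ to show that $(E+B)^{-1}$ maps the dense set $C_c(\R;X)$ into $F$;
\item hence $(E+B)D(\widetilde A)$ is dense, and $D(B)\subset D(\widetilde A)$ follows by approximation.
\end{itemize}
You instead quote the invariant-core theorem (a dense subspace of the generator's domain that is invariant under the semigroup is a core; see e.g. Engel--Nagel, Prop.~II.1.7). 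You apply it to the $G_t$-invariant space $C^1_c(\R;X)$.

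Your route is shorter and never needs vector-valued Sobolev spaces or differentiation of the resolvent integral. The paper's route is self-contained: its Steps 4--5 are essentially the usual proof of such core criteria, specialized to this setting. It also yields, as a by-product, the formula for the generator on all of $F$.

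Two small corrections.

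\emph{Sign of the group.} With $T_t=e^{tA}$, the group $T_tu(s-t)$ has generator $u\mapsto Au-u'$. Replacing $t$ by $-t$ throughout gives $u\mapsto u'-Au$. Neither is $\pm\widetilde A_0$. You must flip the sign of only one factor, e.g. take $T_{-t}u(s-t)$ (the group generated by $-A$, as in the paper), whose generator is $-(u'+Au)$. Nothing else in your argument changes.

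\emph{Uniform convergence in Step (a).} You need $T_t\frac{u(s-t)-u(s)}{t}\to -u'(s)$ uniformly in $s$. Uniform continuity of $u'$ is not enough on its own: you also need $T_t\to E$ uniformly on the compact range of $u'$. This is the same compactness argument you already use for the second term.
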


\begin{proof}
	Since $A$ is skew-adjoint, by Stone's theorem the operator $-A$ generates a unique unitary $c_0$-group of isomorphisms on $H$, which we denote by $T_t$, where $t\in \R$.
	Consider the following unitary $c_0$-group on $\widetilde{H}$:
	\begin{equation}
		(G_t u)(s) := T_t(u(s-t)).
	\end{equation}

	Let $-B$ denote the generator of $G_t$ which is skew-adjoint by Stone's theorem. We are going to prove that $B$ is the closure of $\widetilde{A}_0$.
	
	\emph{Step 1.}
	Let 
	\begin{equation*}
		F := W^{1,2}(\R; H) \cap L^2(\R; X).
	\end{equation*}	
	(Here $W^{1,2}(\R; H)$ denotes the Sobolev space, which consists of all $u\in L^2(\R; H)$, for which there exists a weak derivative $u'\in L^2(\R; H)$, see e.g. \cite[Definition 2.2.22]{GP2006}.)
	Let us show that $F \subset D(B)$ and 
	\begin{equation}\label{B-action}
		(Bu)(s) = u'(s) + A(u(s)), \quad s\in \R, \quad u\in F.
	\end{equation}
	
	Suppose that $u\in F$.
	Then
	\begin{equation*}
		\frac{(G_t u)(s) - u(s)}{t} = T_t \frac{u(s-t) - u(s)}{t} + \frac{T_t (u(s)) - u(s)}{t}.
	\end{equation*}
	Since
	\begin{equation*}
		\lim_{t\to 0} \frac{u(s-t) - u(s)}{t} = -u'(s)\qquad\text{in}\quad \widetilde{H}
	\end{equation*}
	and
	\begin{equation*}
		\lim_{t\to 0} \frac{T_t (u(s)) - u(s)}{t}= -A (u(s))\qquad\text{in}\quad \widetilde{H},
	\end{equation*}
	we find that there exists
	\begin{equation*}
		-Bu = \lim_{t\to 0} \frac{G_t u - u}{t}\in \widetilde{H}
	\end{equation*}
	and
	\begin{equation*}
		(Bu)(s) = u'(s) + A(u(s)), \quad s\in \R.
	\end{equation*}
	Thus $u \in D(B)$. 
	
	\emph{Step 2.}
	Clearly, $\widetilde{X}_0=C^1_c(\R; X_0)\subset F$. 
	By Step 1 $F\subset D(B)$ and \eqref{B-action} holds which coincides with \eqref{A_0-tilde-def} for all $u\in\widetilde{X}_0$.
	It follows that $\widetilde{A}_0\subset B$, and so $B$ is a closed extension of $\widetilde{A}_0$. 
	This implies that $\widetilde{A}_0$ is closable. 
	Let us denote $\widetilde{A}$ the closure of $\widetilde{A}_0$, then $\widetilde{A} \subset B$.
	It remains to prove the inclusion $B \subset \widetilde{A}$.
	
	\emph{Step 3.}
	Let us show that $F \subset D(\widetilde{A})$.
	First we will prove that $C^1_c(\R; X) \subset D(\widetilde{A})$ and for any~$u\in C^1_c(\R; X)$
	\begin{equation}\label{widetilde-A}
		(\widetilde{A} u)(s) = u'(s) + A(u(s)).
	\end{equation}
	By Lemma~\ref{dense-subspace} there exist and open interval $(a,b)$ and a sequence $v_n \in C^1_c(\R; X_0)$
	such that for all $n\in \N$ we have $\supp v_n \subset (a,b)$ and $\|v_n - u\|_{C^1(\R; X)} \to 0$ as $n\to \infty$. 
	Since $v_n \in D(\widetilde{A}_0)$ and $\widetilde{A}$ is the closure of $\widetilde{A}_0$, we have
	\begin{equation*}
		(\widetilde{A} v_n)(s) = (\widetilde{A}_0 v_n)(s) = v_n'(s) + A_0 (v_n(s)).
	\end{equation*}
	Since $A$ is the closure of $A_0$ and $v_n \to u$ in $C^1_c(\R; X)$, for all $s\in \R$ we have
	$
		(\widetilde{A} v_n)(s) \to (\widetilde{A} u)(s).
	$
	Furthermore, $\widetilde{A} v_n \to \widetilde{A} u$ in $C_c(\R; H)$ (and consequently in $\widetilde{H}$, since $\supp v_n \subset (a,b)$). 
	At the same time $v_n\to u$ in $C_c(\R; H)$ (hence in $\widetilde{H}$).
	Since $\widetilde{A}$ is closed, it follows that $u\in D(\widetilde{A})$ and~\eqref{widetilde-A} holds.
	
	Now let us show that any compactly supported $u \in F$ belongs to $D(\widetilde{A})$.
	For any $\eps>0$ let $(u)_\eps := u * \omega_\eps$. 
	Clearly $(u)_\eps$ belongs to $C^1_c(\R; X)$ and at the same time $(u)_{\eps}\to u$ in $L^2(\R; X)$ and in $\widetilde{H}$, while $(u)_\eps' = (u')_\eps \to u'$ in $\widetilde{H}$ as $\eps\to 0$. 
	Then again $u\in D(\widetilde{A})$ and \eqref{widetilde-A} holds by the closedness of $\widetilde{A}$.
	
	Finally, consider arbitrary $u \in F$. 
	Let $w$ be a smooth function such that $w(s)=1$ for $|s|<1$ and $w(s)=0$ for $|s|>2$.
	For any $R>0$ let $u_R(s) = u(s)w(s/R)$.
	Then $u_R \to u$ in $L^2(\R; X)$ and in $\widetilde{H}$, while $u_R' \to u'$ in $\widetilde{H}$ as $R\to \infty$.
	Hence again $u\in D(\widetilde{A})$ and \eqref{widetilde-A} holds by the closedness of $\widetilde{A}$.
	
	\emph{Step 4.}
	Let $E$ denote the identity operator. Since $B$ is a skew-adjoint operator, the operator $(E+B)$ has bounded inverse $(E+B)^{-1}\colon\widetilde{H}\to D(B)$. Let $u := (E+B)^{-1} f$ where $f\in \widetilde{H}$.
	By the known representation of the resolvent of the generator of $c_0$-semigroup, we find that 
	\begin{equation}\label{resolvent}
		u = \int_0^{+\infty} e^{-t} G_t f \, dt,
		\quad \text{i.e.} \quad
		u(s)= \int_0^{+\infty} e^{-t} T_t f(s-t) \, dt.
	\end{equation}
	Let us show that if $f\in C_c(\R; X)$ then $u\in F$ and 
	\begin{equation}\label{u3}
		\|u\|_{\widetilde H}\leqslant\|f\|_{\widetilde H}.
	\end{equation}
	
	Firstly, let us show that $u\in C(\R; X)$. Since the space $X$ is invariant for the unitary group $T_t$ and $f(s-t)\in X$ for all $s,t\in\R$, the function $g_s(t):=e^{-t}T_tf(s-t)$ takes values in $X$ for all $s,t\in\R$.
	Moreover, since 	 
	\begin{equation}\label{X-norm}
		\|T_tv\|_X=\|T_tv\|_H+\|AT_tv\|_H=\|v\|_H+\|T_tAv\|_H=\|v\|_H+\|Av\|_H=\|v\|_X
	\end{equation}
	for any $v\in X$ and $t\in\R$, and
	\begin{equation*}
		\|T_tv-v\|_X=\|T_tv-v\|_H+\|T_tAv-Av\|_H\to0
	\end{equation*}
	as $t\to0$ for any $v\in X$, the inclusion $f\in C_c(\R; X)$ implies that $g_s\in C_c(\R;X)$ for each $s\in\R$, and so $u(s)\in X$ as the integral \eqref{resolvent} of a continuous function $g_s$ with values in Banach space $X$. In particular,
	\begin{equation*}
		Au(s)=\int_0^{+\infty} e^{-t} AT_t f(s-t) \, dt,\qquad s\in\R.
	\end{equation*}
	Continuity of $u$ follows from continuity of $g_s(t)$ with respect to $s$ and uniform convergence of the integral \eqref{resolvent} with respect to the graph norm.
	
	Secondly, we'll show that $u\in L^2(\R;X)$, $Au\in \widetilde{H}$ and \eqref{u3}. Using \eqref{X-norm} we get
	\begin{equation*}
		\|u(s)\|_{X} 
		\le \int_0^{+\infty} e^{-t} \|T_t f(s-t)\|_{X} \, dt 
		=\int_0^{+\infty} e^{-t} \|f(s-t)\|_{X} \, dt
		= (\gamma * \fhi) (s),
	\end{equation*}
	where $\gamma(t) = \theta(t) e^{-t}$, $\theta$ is the Heavyside function
	and $\fhi(t) = \|f(t)\|_{X}$. Then by the Young's convolution inequality
	\begin{equation}\label{u2}
		\|u\|_{L^2(\R; X)} \le \|\gamma\|_1 \|\fhi\|_2 = \|\fhi\|_2 = \|f\|_{L^2(\R; X)}<+\infty.
	\end{equation}
	It follows from \eqref{u2} that $u\in L^2(\R;X)$ and $Au\in L^2(\R; H)$ (since $\|Au(s)\|_H\le\|u(s)\|_X$). The inequality \eqref{u3} is proven in the same way as \eqref{u2}.
	
	Lastly, let us show that $u\in W^{1,2}(\R; H)$. Indeed, using the representation \eqref{resolvent} we get 
	\begin{align}
		u'(s) &= \frac{d}{ds} \int_0^{+\infty} e^{-t} T_t f(s-t) \, dt \\
		&= \frac{d}{ds} \int_{-\infty}^{s} e^{t-s} T_{s-t} f(t) \, dt \\
		&= f(s) - \int_{-\infty}^{s} e^{t-s} T_{s-t} f(t) \, dt - \int_{-\infty}^{s} e^{t-s} AT_{s-t} f(t) \, dt \\
		&= f(s) - u(s) - A u(s) \in L^2(\R; H).
	\end{align}
	
	\emph{Step 5.}
	Let $u \in D(B)$, then $f:= u + Bu$ belongs to $\widetilde{H}$.
	We can find a sequence $f_k \in C_c(\R; X)$ which approximates $f$ in $\widetilde{H}$:
	$\|f_k - f\|_{\widetilde{H}} \to 0$ as $k\to \infty$.
	Let $u_k := (E + B)^{-1} f_k$. 
	By the previous step we have $u_k \in F$, and by \eqref{u3} we have $\|u_k - u\|_{\widetilde{H}} \to 0$ as $k\to \infty$.
	
	By Steps 1 and 3 we have $F \subset D(B) \cap D(\widetilde{A})$.
	Then for each $k$ we have $\widetilde{A} u_k = B u_k = f_k - u_k$.
	Since $u_k \to u$ and $f_k - u_k \to f-u$ in $\widetilde{H}$,
	it follows that $u \in D(\widetilde{A})$ and $\widetilde{A}(u) = f-u$.
	Thus $D(B) \subset D(\widetilde{A})$, i.e. $B \subset \widetilde{A}$.
\end{proof}

\begin{remark}
	In \cite[Lemma 5.1]{Panov2015} Proposition~\ref{skew-adjoint-extra-dimension} was proved for the operator~$A_0$ given by~\eqref{differentiation-along-v} with bounded divergence-free $\v$.
\end{remark}

The implication (i)$\to$(ii) of Theorem~\ref{main}
is a direct consequence of the following result:

\begin{proposition}\label{spacial-skew-adj-and-renorm}
	Let $\v \in L^2(\R^d)$ be a compactly supported divergence-free vector field.
	Suppose that the operator $A_0 \rho := \v \cdot \nabla \rho$
	with $D(A_0) = C^\infty_c(\R^{d})$ is essentially skew-adjoint in $L^2(\R^{d})$.
	Then for any open interval $I\subset \R$ the vector field $\v$
	has the renormalization property in the class $L^2(I \times \R^d)$.
\end{proposition}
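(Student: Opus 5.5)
The plan is to combine Proposition~\ref{skew-adjoint-extra-dimension} with Proposition~\ref{space-time-skew-adj-and-renorm}. We apply the abstract result with $H = L^2(\R^d)$ and with $A_0 = \v\cdot\nabla$ on $D(A_0) = C^\infty_c(\R^d)$. This gives that the operator $\widetilde{A}_0 u = u' + A_0 u$, with domain $\widetilde{X}_0 = C^1_c(\R; X_0)$, is essentially skew-adjoint on $\widetilde{H} = L^2(\R; L^2(\R^d))$. We then identify $\widetilde{H}$ with $L^2(\R^{d+1})$ in the standard way, via the convention preceding \eqref{Cinf-C1C1}. Under this identification the action of $\widetilde{A}_0$ is exactly $\d_t + \v\cdot\nabla_x$. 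What remains is to show that the closure of $B_0 = \d_t + \v\cdot\nabla_x$ on $D(B_0)=C^\infty_c(\R^{d+1})$ coincides with the closure $\widetilde{A}$ of $\widetilde{A}_0$. Once that is done, Proposition~\ref{space-time-skew-adj-and-renorm} applies directly and yields the renormalization property in $L^2(I\times\R^d)$.

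To compare the closures we prove two inclusions.

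\emph{First inclusion, $\overline{B_0}\subset\widetilde{A}$.} By \eqref{Cinf-C1C1}, $C^\infty_c(\R^{d+1}) \subset C^1_c(\R; C^1_c(\R^d))$. Every $\rho\in C^\infty_c(\R^{d+1})$ has compact support, so it lies in $C^1_c(\R; X)$, where $X$ is $D(A)$ with the graph norm. Indeed, $C^1_c(\R^d)$-valued $C^1$ maps with uniformly compact spatial support are $C^1$ into $X$: the $X$-norm is controlled by the $C^1(\R^d)$ norm on a fixed compact set, using $\v\in \boldsymbol{L}^2$ and $\|\v\cdot\nabla\fhi\|\le \|\v\|_2\|\nabla\fhi\|_\infty$. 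Step~3 of the proof of Proposition~\ref{skew-adjoint-extra-dimension} shows that $C^1_c(\R;X)\subset D(\widetilde{A})$, with $\widetilde{A}\rho = \d_t\rho + \v\cdot\nabla\rho$. Hence $B_0\subset\widetilde{A}$, and so $\overline{B_0}\subset\widetilde{A}$.

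\emph{Second inclusion, $\widetilde{A}_0\subset\overline{B_0}$.} We approximate each $u\in C^1_c(\R; C^\infty_c(\R^d))$ in the graph norm of $B_0$ by elements of $C^\infty_c(\R^{d+1})$. By Lemma~\ref{dense-subspace}, applied with $X_0 = C^\infty_c(\R^d)$ in itself, it suffices to treat separated functions $\psi(t)\fhi(x)$ with $\psi\in C^\infty_c(\R)$ and $\fhi\in C^\infty_c(\R^d)$. Such functions are already in $C^\infty_c(\R^{d+1})$. Convergence in $C^1(\R;X)$ with supports in a fixed interval $(a,b)$ implies convergence in the graph norm of $\widetilde{A}$. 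Therefore $\widetilde{A}_0\subset \overline{B_0}$, which gives $\widetilde{A}\subset\overline{B_0}$. Combined with the first inclusion, $\overline{B_0}=\widetilde{A}$ is skew-adjoint.

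The main obstacle I expect is the bookkeeping between the two function-space frameworks. The abstract space $C^1_c(\R;X)$ versus $C^\infty_c(\R^{d+1})$ is the delicate point, as the example $t^2a(t)b(tx)$ preceding this subsection warns. The approximation must therefore keep a uniform compact support in $t$, which is exactly what Lemma~\ref{dense-subspace} provides. It must also ensure $L^2$ control in $(t,x)$, which follows from bounded $t$-support together with the $C(\R;X)$ bounds. With the closures identified, the conclusion follows verbatim from Proposition~\ref{space-time-skew-adj-and-renorm}.
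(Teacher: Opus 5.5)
Your proposal is correct and follows the paper's own proof. You apply Proposition~\ref{skew-adjoint-extra-dimension}, use Lemma~\ref{dense-subspace} to see that $C^\infty_c(\R^{d+1})\subset C^1_c(\R;X_0)$ is dense in the graph norm of $\widetilde{A}_0$, so that $\overline{B_0}=\widetilde{A}$, and then invoke Proposition~\ref{space-time-skew-adj-and-renorm}. Your first inclusion could be shortened by observing directly that $C^\infty_c(\R^{d+1})\subset C^1_c(\R;X_0)=D(\widetilde{A}_0)$, which gives $B_0\subset\widetilde{A}_0$ without going through Step~3; the detour is harmless.
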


\begin{proof}
	Let $\widetilde{A}_0$ denote operator defined in \eqref{A_0-tilde-def}.
	(Recall that $D(\widetilde{A}_0) = C^1_c(\R; X_0)$, where $X_0$ denotes $C^\infty_c(\R^d)$ endowed with the graph norm of $A_0$.)
	By Proposition~\ref{skew-adjoint-extra-dimension} the operator $\widetilde{A}_0$ is essentially skew adjoint.
	
	Now let us consider the operator $B_0(\rho) := \d_t \rho + \v \cdot \nabla_x \rho$ with the domain $D(B_0) = C^\infty_c(\R^{d+1}) \subset L^2(\R^{d+1})$.
	By Lemma~\ref{dense-subspace} the inclusion $C^\infty_c(\R^{d+1}) \subset C^1_c(\R; X_0)$ holds and is dense with respect to the graph norm of $\widetilde{A}_0$.
	Then by Proposition~\ref{space-time-skew-adj-and-renorm} $\vi$ has the renormalization property.
\end{proof}

\begin{corollary}\label{strongly-cont-version-is-given-by-group}
	If $\vi \in \boldsymbol{L}^2(\R^d)$ is a compactly supported divergence-free vector field
	and the operator $A_0 \rho := \v \cdot \nabla \rho$
	with $D(A_0) = C^\infty_c(\R^{d})$ is essentially skew-adjoint in $L^2(\R^{d})$,
	then for any $T>0$ and $\rho_0 \in L^2(\R^d)$ there exists a unique solution $\rho \in L^\infty(0,T; L^2(\R^d))$ of \eqref{CE} with $\rho|_0 = \rho_0$.
	Furthermore, the strongly continuous version $\widetilde{\rho}$ of $\rho$ is given by the group $S_t$ generated by $-A$, where $A$ is the closure of $A_0$, i.e. for all $t\in [0,T]$ we have
	$\widetilde{\rho}(t,\cdot) = S_t \rho_0 = \rho|_t$.
\end{corollary}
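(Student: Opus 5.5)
Uniqueness and existence follow by chaining earlier results. Essential skew-adjointness of $A_0$ gives, by Proposition~\ref{spacial-skew-adj-and-renorm}, the renormalization property of $\v$ in $L^2(I\times\R^d)$ for every open interval $I$. Take $I=(0,T)$ (or a slightly larger bounded open interval containing $[0,T]$). Since $I$ is bounded, $L^\infty(I;L^2(\R^d))\subset L^2(I\times\R^d)$, so the renormalization property also holds in $L^\infty(I;L^2(\R^d))$. Theorem~\ref{renormalized-solutions}, applied with $s=0\in\overline I$, then gives at most one solution with $\rho|_0=\rho_0$, and Theorem~\ref{existence-of-weak-solutions} gives existence. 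The hypothesis of Theorem~\ref{uniqueness-implies-strong-continuity} (uniqueness for every $s\in I$) also follows from Theorem~\ref{renormalized-solutions}. Hence the unique solution has a strongly continuous version $\widetilde\rho$ defined for all $t\in\R$, and $\widetilde\rho(t,\cdot)=\rho|_t$ for all $t\in[0,T]$ by Proposition~\ref{weakly-cont-version} and the remark after Definition~\ref{def-weakly-cont-version}.

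The identification with the group uses Stone's theorem. Let $A=\overline{A_0}$. By assumption $A^*=-A$, so $-A$ is skew-adjoint and generates a unitary $c_0$-group $S_t$. We check that $t\mapsto S_t\rho_0$ solves \eqref{CE} through Proposition~\ref{adjoint-operator-extends-generator}: the generator is $G=-A$, and $G\subset A^*$ because $-A=A^*$. Thus (ii) of that proposition holds, and by (i) the function $S_t\rho_0$ is a weak solution on $(0,+\infty)$, hence on $(0,T)$. It is strongly continuous with $S_0\rho_0=\rho_0$, so its trace at $0$ is $\rho_0$.

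By the uniqueness just established, $S_t\rho_0$ and $\rho$ coincide a.e. on $(0,T)$. Both $S_t\rho_0$ and $\rho|_t$ are weakly continuous on $[0,T]$ and agree for a.e. $t$, so they agree for every $t\in[0,T]$. This gives $\widetilde\rho(t,\cdot)=S_t\rho_0=\rho|_t$.

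The main obstacle is bookkeeping rather than any new estimate. One point is the passage from the class $L^2(I\times\R^d)$ to $L^\infty(I;L^2(\R^d))$, which needs $I$ bounded. The other is the trace at the endpoint $s=0$, which Theorem~\ref{renormalized-solutions} already handles via the boundary case in its proof.
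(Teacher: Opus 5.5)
Your proposal is correct and follows the paper's proof. Uniqueness comes from Proposition~\ref{spacial-skew-adj-and-renorm} and Theorem~\ref{renormalized-solutions}, using your correct observation that $L^\infty(I;L^2(\R^d))\subset L^2(I\times\R^d)$ for bounded $I$; then $t\mapsto S_t\rho_0$ is a solution by Proposition~\ref{adjoint-operator-extends-generator} with $G=-A\subset A^*$; finally a.e.\ equality plus continuity gives equality for every $t$. Your appeals to Theorem~\ref{existence-of-weak-solutions} and Theorem~\ref{uniqueness-implies-strong-continuity} are harmless but redundant, since $S_t\rho_0$ itself already supplies existence and the strongly continuous version.
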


\begin{proof}
	By Proposition~\ref{adjoint-operator-extends-generator} with $G=-A$ 
	the function $\eta := S_t \theta$ solves \eqref{CE} and satisfies $\eta|_0 = \rho_0$.
	On the other hand, by Proposition~\ref{spacial-skew-adj-and-renorm} and Theorem~\ref{renormalized-solutions} we have uniqueness of weak solutions of the initial value problem for \eqref{CE}, hence $\eta(t, \cdot) = \rho(t, \cdot)$ for a.e. $t$.
	Then, since $\rho(t, \cdot) = \widetilde{\rho}(t,\cdot)$ for a.e. $t$ and $t\mapsto \eta(t, \cdot)$ is strongly continuous, it follows that $\eta(t, \cdot) = \widetilde{\rho}(t, \cdot)$ for \emph{all} $t$.
\end{proof}

\section{Unequivalence of forward and backward uniqueness}\label{unequivalence}

% !TeX root = skew-adjoint.tex

In view of Theorem~\ref{renormalized-solutions}
the renormalization property of $\v$ implies uniqueness
of weak solutions of the continuity both forward and backward in time.
In this section we prove that in the autonomous two-dimensional setting
uniqueness forward in time is equivalent to uniqueness backward in time,
at least for the class of all compactly supported bounded vector fields.

The main feature of the two-dimensional setting is that if the vector field $\vi$ is bounded and divergence-free, then it can be written as the skew gradient of some Lipschitz function (called \emph{stream function} of $\vi$): $\vi = \nabla^\perp f$, where $\nabla^\perp f = (-\d_2 f, \d_1 f)$.
Alberti, Bianchini and Crippa introduced the following property:

\begin{definition}
	Let $f\colon \R^2 \to \R$ be a continuous function.
	Let $Z\subset \R^2$ denote the critical set of~$f$
	(i.e. the set of all points where $f$ is not differentiable, or has $\nabla f = 0$).
	Let $E^*$ be the union of all connected components of the level sets
	of $f$ which have strictly positive length (i.e. Hausdorff measure $\H^1$).
	We say that $f$ has the \emph{weak Sard property}, if
	\begin{equation*}
		f_\# (\L^2 \rest (Z\cap E^*)) \perp \L^1. \label{WSP}
	\end{equation*}
\end{definition}

Here $\mu \perp \nu$ means that $\mu$ and $\nu$ are mutually singular measures, and $\mu \rest A$ denotes the restriction of the measure $\mu$ to the set $A$.
Note that the set $E^*$ is Borel, since $f$ is continuous
(see \cite[Proposition 6.1]{ABC_2013_LipSard}).

Let $T>0$ and $I=(0,T)$.
Let $\Lip_c(\R^d)$ denote the set of all compactly supported Lipschitz functions on $\R^d$.
The following elegant geometric uniqueness criteria was proved in \cite[Theorem 4.7]{ABC_2014}:
\begin{theorem}\label{uniq-Linf} 
	Let $f\in \Lip_c(\R^2)$ and $\v = \nabla^\perp f$.
	Then the following two properties are equivalent:
	\begin{enumerate}
		\item
		If~$\rho \in L^\infty(I; L^\infty(\R^2))$
		solves \eqref{CE} with the initial condition $\rho|_0 = 0$,
		then $\rho = 0$ a.e.
		\item
		The function $f$ has the weak Sard property.
	\end{enumerate}
\end{theorem}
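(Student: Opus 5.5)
This is the Alberti–Bianchini–Crippa theorem quoted from \cite[Theorem 4.7]{ABC_2014}, so the plan follows their strategy: reduce the two-dimensional continuity equation to a family of one-dimensional problems on the level sets of the stream function $f$.

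\emph{Step 1 (common setting).} Since $\v=\nabla^\perp f$ with $f\in\Lip_c(\R^2)$, $\v$ is tangent to the level sets of $f$, and $f$ itself solves $\v\cdot\nabla f=0$. For a.e.\ value $h$, the coarea formula and the structure theory of level sets of Lipschitz functions in the plane (\cite{ABC_2013_LipSard}) show that every connected component $C$ of $f^{-1}(h)$ of positive length is a closed rectifiable simple curve. On such a curve $\v$ is tangent with $|\v|=|\nabla f|$.

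\emph{Step 2 (decomposition).} Disintegrate $\L^2\rest E^*$ via the coarea formula along the components $C$: on the part where $\nabla f\neq 0$ the conditional measures are $\H^1\rest C/|\nabla f|$. Split the remainder into the part carried by $Z\cap E^*$ and the part off $E^*$. Testing \eqref{CE} with $\fhi(x)=g(f(x))\psi(x)$ shows that a bounded solution $\rho$ decomposes into solutions $\rho_C$ of the one-dimensional equation on each curve $C$, parametrized by arclength $\gamma$. There it reads $\d_t\rho_C+\d_s(\rho_C\,|\nabla f|(\gamma(s)))=0$ in the sense of measures. Off $E^*$ the solution is trivial, since $\v=0$ a.e.\ there.

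\emph{Step 3 ((2)$\Rightarrow$(1)).} Under the weak Sard property, $\L^2(Z\cap E^*\cap f^{-1}(H))=0$ for some set $H$ of full $\L^1$-measure. So for a.e.\ $h$ the speed $|\nabla f|$ is positive $\H^1$-a.e.\ on the curve, with respect to the measure carried by the solution. The one-dimensional equation on a circle with a.e.-positive speed and an absolutely continuous density has the unique solution obtained by transport along the time-reparametrized rotation. Hence $\rho_C=0$ when the initial datum vanishes, and integrating over $h$ gives $\rho=0$.

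\emph{Step 4 ((1)$\Rightarrow$(2)), the main obstacle.} If the weak Sard property fails, there is a set of values $H$ with $\L^1(H)>0$ such that $f_\#(\L^2\rest(Z\cap E^*))\rest H$ is absolutely continuous and nonzero. Then on many curves $C$ there is a positive-length portion where $|\nabla f|=0$ but the curve carries area. On such curves one builds nonzero solutions starting from zero: mass sits on the stagnant set and then departs along the curve (or the reverse), in a way that remains compatible with the area density. The difficulty is to choose these one-dimensional solutions measurably in $h$ and to make their superposition an $L^\infty$ function of $x$ that satisfies \eqref{CE} globally. I would try the following explicit choice: $\rho(t,\cdot)=\1_{\{\tau(x)<t\}}\,a(f(x))-\ldots$, where $\tau$ is the arrival time along the curve measured from the stagnant set. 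The density bound then has to be verified using the absolute continuity of $f_\#(\L^2\rest(Z\cap E^*))$ on $H$.
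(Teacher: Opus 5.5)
\textbf{The paper gives no proof of this statement.} It is quoted from \cite[Theorem 4.7]{ABC_2014}, so the benchmark is the Alberti--Bianchini--Crippa argument; your Steps 1--3 sketch its sufficiency half in outline. The genuine gap is Step 4, the implication (1)$\Rightarrow$(2), which is not proved. The candidate solution is left as $\1_{\{\tau(x)<t\}}a(f(x))-\ldots$, and its measurability, its $L^\infty$ bound and the verification of \eqref{CE} are all deferred.

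Moreover, the mechanism you describe cannot occur. Since $\nabla f=0$ a.e.\ on $Z$, the coarea formula gives $0=\int_Z|\nabla f|\,dx=\int_\R\H^1(Z\cap f^{-1}(h))\,dh$. So for a.e.\ $h$ the set $Z\cap f^{-1}(h)$ is $\H^1$-null, and the curves you want to use have no stagnant arc of positive length. For the same reason, the $\H^1$-a.e.\ positivity of $|\nabla f|$ in Step 3 holds for every Lipschitz $f$. What weak Sard actually supplies there is that the conditional measures of $\L^2\rest E^*$ give no mass to $Z$.

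\textbf{What failure of weak Sard really gives.} On a set of levels of positive measure, the conditional measure of $\L^2\rest E^*$ along a component $C$ splits as $\mu_C=\mu_C^a+\mu_C^s$. Here $\mu_C^a=\frac{1}{|\nabla f|}\H^1\rest C$, and $\mu_C^s\neq0$ is concentrated on the $\H^1$-null set $Z\cap C$. Non-uniqueness comes from exchanging mass between these two parts. This is possible because in the arclength variable $s$ the equation reads $\d_t(u\,\mu_C)+\d_s u=0$, and $D_s u(t,\cdot)$ may have a part singular with respect to arclength.

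For instance, take $u=\alpha t$ $\mu_C^s$-a.e.\ and $u=G(s)-\beta t$ $\mu_C^a$-a.e., with $D_sG=-\alpha\mu_C^s+\beta\mu_C^a$ and $\beta=\alpha\mu_C^s(C)/\mu_C^a(C)$. This is a solution. Compare it with the solution having the same datum that transports the $\mu_C^a$-part along the rotation and keeps the $\mu_C^s$-part at $0$. Their difference is a nontrivial solution with zero datum.

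\textbf{What remains missing.} You still have to choose such one-dimensional solutions measurably over a positive-measure family of components, with densities bounded with respect to $\mu_C$. You also have to prove that such a family glues to a distributional solution in $L^\infty(I;L^\infty(\R^2))$. This is the substance of the proof in \cite{ABC_2014}, and none of it is here.

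\textbf{A smaller gap in Step 2.} Testing with $\fhi=g(f)\psi$ localizes only to level sets. It does not separate the individual components of one level set, which may be countably many and accumulating, nor the part of $\rho$ on zero-length components. For that you need further cutoffs, e.g.\ $\1_{U_C}\min(1,|f-h|/\eps)$ with $U_C$ the region bounded by $C$. These are Lipschitz with gradient parallel to $\nabla f$, so they commute with the equation.
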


In fact the weak Sard property is sufficient for uniqueness of weak solutions
in a slightly different class (see e.g. \cite[Remark A.5]{GK24}):

\begin{theorem}\label{uniq-L1}
	Suppose that $f\in \Lip_c(\R^2)$ has the weak Sard property and $\v = \nabla^\perp f$. 
	Let~$p\in[1,+\infty)$.
	If~$\rho \in L^\infty(I; L^p(\R^2))$
	solves \eqref{CE} with the initial condition $\rho|_0 = 0$,
	then $\rho = 0$ a.e.
\end{theorem}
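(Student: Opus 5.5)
The plan is to reduce Theorem~\ref{uniq-L1} to the bounded case, Theorem~\ref{uniq-Linf}, by truncation and renormalization. Since $f$ is compactly supported and Lipschitz, $\v=\nabla^\perp f$ is bounded, compactly supported and divergence-free. The key input is the structure behind Theorem~\ref{uniq-Linf} in \cite{ABC_2014}: under the weak Sard property the equation \eqref{CE} decouples along the connected components of the level sets of $f$. More precisely, I would use the disintegration $\L^2\rest\{\nabla f\neq 0\}$ along level sets together with the weak Sard property to show that every bounded solution is renormalized. The argument will use that renormalization property, not merely the uniqueness statement (1) of Theorem~\ref{uniq-Linf}.

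\emph{Step 1: locality.} Let $\rho\in L^\infty(I;L^p(\R^2))$ solve \eqref{CE} with $\rho|_0=0$. Outside $\supp\v$ we have $\d_t\rho=0$, so $\rho=0$ there. Hence $\rho$ vanishes outside a fixed compact set $K$, and in particular $\rho\in L^\infty(I;L^1(\R^2))$. It therefore suffices to treat $p=1$.

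\emph{Step 2: renormalization of $L^1$ solutions.} I would show that $\beta(\rho)$ solves \eqref{CE} for bounded renormalizers $\beta$ (Lipschitz, $\beta(0)=0$), and in particular for the truncations $T_k(r)=\max(-k,\min(k,r))$ after smoothing.

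Following \cite{ABC_2014}, I would disintegrate the equation along the level sets of $f$. On almost every regular level-set component $C$ with positive length, the equation becomes a one-dimensional transport equation along a closed curve, with Lipschitz speed $|\nabla f|$ in arc-length. Such an equation is well posed, and its solutions are renormalized for $L^1$ data because the problem reduces to a 1D ODE.

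On the critical part $Z\cap E^*$, the weak Sard property together with the coarea-type argument shows that the divergence term gives no contribution. There $\rho$ is constant in time and its initial trace is $0$, so $\rho=0$ on that part.

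On $Z\setminus E^*$, the connected components have zero length. There $\v=0$ a.e., because $\nabla f=0$ a.e. on $Z$, so $\rho$ is again constant in time and hence zero.

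The main obstacle is this step: checking that the decomposition in \cite{ABC_2014} only uses local integrability of $\rho\v$, which holds since $\v\in L^\infty$ and $\rho\in L^1$, and not boundedness of $\rho$. I expect this to go through because the decomposition is carried out with test functions of the form $\fhi(f)\psi$, which are Lipschitz, and the $L^\infty$ assumption enters only through duality pairings. Those pairings remain valid when $\rho\in L^1$ and $\v\in L^\infty$.

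\emph{Step 3: conclusion.} Once renormalization holds, $T_k(\rho)\in L^\infty(I;L^\infty)$ solves \eqref{CE}. Its trace at $0$ is $T_k(0)=0$: this follows by applying the same argument on $(-1,T)$ with $\rho$ extended by zero for $t<0$, as in Remark~\ref{extension-with-zero}, since the extended solution is still a solution. Theorem~\ref{uniq-Linf} then gives $T_k(\rho)=0$ for all $k$, hence $\rho=0$ a.e.

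Alternatively, in Step 3 one can avoid Theorem~\ref{uniq-Linf} altogether. Take $\beta(r)=|r|$ (approximated by renormalizers) and test with $\fhi\equiv 1$ on $K$, as in Theorem~\ref{renormalized-solutions}. This shows that $\int|\rho(t)|\,dx$ is constant in $t$ and equal to $0$.
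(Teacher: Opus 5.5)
The paper gives no argument of its own here. It refers to \cite[Remark A.5]{GK24} for the fact that the level-set method of \cite{ABC_2014} also covers solutions in $L^\infty(I;L^p)$. Your plan (Step~1, an $L^1$ version of that method, then the conclusion) is the natural route, and Steps~1 and~3 are fine. However, Step~2, which carries all the content, has a genuine gap on $Z\setminus E^*$.

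You conclude that $\rho$ is constant in time there because $\v=0$ a.e.\ on that set. This inference is invalid. $\v$ also vanishes a.e.\ on $Z\cap E^*$, so the same reasoning would make the weak Sard hypothesis superfluous in your scheme, contradicting the implication (1)$\Rightarrow$(2) of Theorem~\ref{uniq-Linf}. What is needed is that the set $S$ in question be \emph{invariant}, i.e.\ that $\rho\1_S$ is again a solution; only then does $\v=0$ on $S$ give $\d_t(\rho\1_S)=0$.

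On $Z\cap E^*$ the weak Sard property supplies this invariance. Choose an $\L^1$-null Borel set $N$ with $\L^2((Z\cap E^*)\setminus f^{-1}(N))=0$. Then $\rho\,g(f)$ solves \eqref{CE} for every bounded Borel $g$ (since $\v\cdot\nabla(g\circ f)=0$ for $g\in C^1$, then pass to limits). Take $g=\1_N$ and note that $\nabla f=0$ a.e.\ on $f^{-1}(N)$ by the coarea formula. This works verbatim for $\rho\in L^1$.

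On $Z\setminus E^*$ the hypothesis gives no control of $f_\#(\L^2\rest(Z\setminus E^*))$. So invariance has to come from separating components \emph{within} one level set, which functions of $f$ cannot do. Concretely, disintegrate the $g(f)$-split equation in $h$. For a.e.\ $h$ the compact set $E_h$ has countably many components of positive length (closed simple curves), and $\H^1(E_h\setminus E^*)=0$. Isolate each such curve $C$ by cut-offs $\chi_k$ equal to $1$ near a clopen set $K_k\subset E_h$ and to $0$ near $E_h\setminus K_k$, with $K_k\downarrow C$ (components of a compact set are intersections of clopen sets), so that $\nabla\chi_k=0$ near $E_h$. Subtracting the resulting curve equations from the equation on $E_h$ leaves $\d_t\rho=0$ on the one-point components; the flux there vanishes since $\H^1(E_h\setminus E^*)=0$ and $\v=0$ on the critical part.

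The same cut-off argument is what produces the one-dimensional equation on an individual component in the first place. So your remark that the decomposition only uses test functions of the form $\fhi(f)\psi$ is incomplete: such functions separate values of $f$, not components.

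Two smaller points. First, the speed along $C$ is not Lipschitz. In arc length the split equation reads $\d_t(m\rho)+\d_s\rho=0$ with $m=1/|\nabla f|$, which is merely integrable and positive a.e.\ on $C$. Well-posedness comes from the reparametrization $\tau=\int_0^s m$, which turns the equation into a rotation of a circle of length $\int_C m\,d\H^1$. The reparametrized test functions have $s$-derivative $m\,\d_\tau\psi$, which is only integrable. For bounded $\rho$ a plain $W^{1,1}$ approximation justifies using them. For $\rho\in L^1$ you need an approximation adapted to $\rho$ (e.g.\ truncating $m$ and using $\rho m\in L^1(C)$). So the claim that boundedness enters only through harmless duality pairings is too optimistic, though repairable.

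Second, once the splitting is established for $L^1$ solutions, uniqueness is immediate: on each curve the solution is a rotation of the zero datum, and elsewhere it is constant in time. The detour through renormalization, the truncations $T_k$ and Theorem~\ref{uniq-Linf} is therefore unnecessary.
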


Now we are in a position to collect some properties which are equivalent for autonomous planar divergence-free vector fields:

\begin{theorem}\label{uniqueness-forward-and-backward}
	Suppose that $\v \in \boldsymbol{L}^\infty(\R^2)$ is
	compactly supported and divergence-free.
	Then the following properties are equivalent:
	\begin{enumerate}
		\item[(i)] for any $\rho_0 \in L^2(\R^2)$
		there exists a unique solution $\rho\in L^\infty(I; L^2(\R^2))$
		of \eqref{CE} with the initial condition $\rho|_0 = \rho_0$;
		\item[(ii)] for any $\rho_0 \in L^2(\R^2)$
		there exists a unique solution $\rho\in L^\infty(I; L^2(\R^2))$
		of \eqref{CE} with the final condition $\rho|_T = \rho_0$.
		\item[(iii)] the stream function of $\vi$ has the weak Sard property;
		\item[(iv)] the operator $\vi \cdot \nabla$ is essentially skew adjoint.
	\end{enumerate}
\end{theorem}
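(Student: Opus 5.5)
The plan is to prove the cycle (iii)$\Rightarrow$(i), (iii)$\Rightarrow$(ii), then (i)$\Rightarrow$(iii) and (ii)$\Rightarrow$(iii), and finally (iii)$\Leftrightarrow$(iv) via Theorem~\ref{main}. Throughout we use that a bounded compactly supported divergence-free $\v$ on $\R^2$ equals $\nabla^\perp f$ with $f\in\Lip_c(\R^2)$. We also use that existence in (i), (ii) is always supplied by Theorem~\ref{existence-of-weak-solutions}, which applies with $s=0$ or $s=T$ since $\v\in\boldsymbol{L}^2$. So only uniqueness matters.

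\emph{(iii)$\Rightarrow$(i),(ii).} Assume the weak Sard property. By linearity, uniqueness in (i) reduces to showing that any $\rho\in L^\infty(I;L^2(\R^2))$ with $\rho|_0=0$ vanishes, and this is exactly Theorem~\ref{uniq-L1} with $p=2$. For (ii), set $\tilde\rho(t,x):=\rho(T-t,x)$. This solves \eqref{CE} with the field $-\v=\nabla^\perp(-f)$ and has $\tilde\rho|_0=\rho|_T$. Since $-f$ has the same critical set, the same set $E^*$, and $(-f)_\#\mu$ is the reflection of $f_\#\mu$, the weak Sard property passes to $-f$, and Theorem~\ref{uniq-L1} applies again.

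\emph{(i)$\Rightarrow$(iii) and (ii)$\Rightarrow$(iii).} Suppose (i) holds and let $\rho\in L^\infty(I;L^\infty(\R^2))$ solve \eqref{CE} with $\rho|_0=0$. We need $\rho=0$, and Theorem~\ref{uniq-Linf} would then give the weak Sard property. The key point is that $\rho$ need not be in $L^2$ in $x$. Here we use compact support: outside a ball $B\supset\supp\v$ the equation reduces to $\d_t\rho=0$, so $\rho(t,\cdot)=\rho|_0=0$ a.e.\ off $B$. Concretely, testing \eqref{CE-int-by-parts} with $\fhi$ supported in $\R^2\setminus\overline B$ gives $\langle\rho|_t,\fhi\rangle=\langle\rho|_0,\fhi\rangle=0$. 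Hence $\rho\in L^\infty(I;L^2)$ with zero initial datum, so $\rho=0$ by (i). For (ii) we apply the same reasoning to $-\v$ after time reversal, obtaining the weak Sard property for $-f$ and hence, by the symmetry noted above, for $f$. This gives (i)$\Leftrightarrow$(ii)$\Leftrightarrow$(iii).

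\emph{(iii)$\Leftrightarrow$(iv).} By Theorem~\ref{main}, (iv) is equivalent to uniqueness of the Cauchy problem from every $s\in I$ in both time directions. Given (iii), uniqueness from an interior time $s$ follows by splitting $\rho$ into forward and backward parts, as in the proof of Theorem~\ref{renormalized-solutions}. Each part is then handled by time translation (using that $\v$ is autonomous) together with (i) and (ii), on subintervals of length at most $T$; for arbitrary bounded intervals the weak Sard argument works on any $(0,T')$. This yields Theorem~\ref{main}(iii) and hence (iv). Conversely, (iv) implies (i) through Corollary~\ref{strongly-cont-version-is-given-by-group}.

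I expect the main obstacle to be the bookkeeping of intervals and traces. Theorems~\ref{uniq-Linf} and~\ref{uniq-L1} are stated for $I=(0,T)$, while Theorem~\ref{main} requires uniqueness from interior times $s$. We also have to check carefully that the cut-off solutions $\rho_\pm$ remain weak solutions with the correct traces, and that the reduction from $L^\infty$ to $L^2$ via the compact support of $\v$ is rigorous at the level of the traces $\rho|_t$.
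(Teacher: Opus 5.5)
Your proposal is correct and takes essentially the same route as the paper. It uses the stream function $f\in\Lip_c(\R^2)$, time reversal to pass between $f$ and $-f$, and Theorem~\ref{uniq-L1} for (iii)$\Rightarrow$(i),(ii). For the converse it reduces bounded solutions to $L^\infty(I;L^2(\R^2))$ via the compact support of $\v$ and then applies Theorem~\ref{uniq-Linf}, and it gets (iv) from Theorem~\ref{main}. Your explicit forward/backward splitting, which gives uniqueness from interior times $s\in I$ as Theorem~\ref{main}(iii) actually requires, and your trace-level argument that $\rho$ vanishes off the ball make rigorous two points the paper only sketches.
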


\begin{proof}[Proof of Theorem~\ref{uniqueness-forward-and-backward}]
	By changing the variables $\widetilde\rho(t,x) :=\rho(T-t, x)$
	it is easy to see that $\rho$ solves \eqref{CE}
	if and only if $\widetilde \rho$ solves $\d_t \widetilde\rho - \dive (\widetilde \rho \v) = 0$. Since the class of bounded divergence-free
	vector fields is invariant under the map $\v \mapsto -\v$,
	then the implications (i)$\Rightarrow$(ii) and (ii)$\Rightarrow$(i) are equivalent. Therefore, to establish equivalence of statements (i), (ii), (iii), it is sufficient to prove that (i) implies (iii) and (iii) implies (ii).
	Furthermore, by linearity of \eqref{CE} it is sufficient
	to consider the case $\rho_0 = 0$.
	
	Let $f\colon \R^2 \to \R$ be a Lipschitz function such that $\v = \nabla^\perp f$.
	Existence of such function follows from the equality $\dive \v =0$.
	Furthermore, since $\v$ is compactly supported, the function $f$
	can be chosen to have compact support as well.
	
	If the support of $\v$ is contained in the ball $\overline{B}_R(0)$
	for some $R>0$ and $\rho\in L^\infty(I; L^2(\R^d))$ solves
	then $\rho=0$ a.e. in the complement of $\overline{B}_R(0)$.
	This shows that any solution of \eqref{CE} with zero initial
	condition has compact support (in space, uniformly in time).
	Then assumption~(i) implies that we have uniqueness (forward in time) in the class $\rho \in L^\infty(I; L^\infty(\R^2))$.
	
	Now, assume that statement (iii) holds.
	By Theorem~\ref{uniq-Linf} this implies that $f$ has the weak Sard property. 
	Since $-\v = -\nabla^\perp f$ and clearly $-f$ has the weak Sard property,
	it immediately follows from Theorem~\ref{uniq-L1} that (ii) holds.
	
	Ultimately, by Theorem~\ref{main} property (iv) is equivalent to conjunction of (i) and (ii), which is equivalent to (iii).
\end{proof}

In the case $d=2$ under the assumptions of Theorem~\ref{uniqueness-forward-and-backward} 
weak solutions of the Cauchy problem for the continuity equation
are unique forward in time if and only if they are unique backward in time.
In the case $d\ge 3$ uniqueness forward in time is not equivalent
to uniqueness backward in time, and each of these properties
is strictly weaker than the renormalization property:

\begin{theorem}%[Fusilli flow]
	\label{fusilli}
	If $d\ge 3$, then
	there exists a %compactly supported 
	divergence-free vector field
	$\v \in \boldsymbol{L}^\infty(\R^d)$ with the following properties:
	\begin{enumerate}
		\item[(i)] for any $\rho_0 \in L^2(\R^d)$
		there exists a unique solution $\rho\in L^\infty(I; L^2(\R^d))$
		of \eqref{CE} (forward in time) with the initial condition $\rho|_0 = \rho_0$;
		\item[(ii)] $\v$ does not have the renormalization property
		in the class $L^\infty(I; L^\infty(\R^d))$
		(and consequently in the class $L^\infty(I; L^2(\R^d))$).
		Furthermore, there exists a nontrivial solution $\rho\in L^\infty(I; L^2(\R^d))$
		of \eqref{CE} (backward in time) with the final condition $\rho|_T = 0$.
	\end{enumerate}
\end{theorem}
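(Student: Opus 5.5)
We will construct $\v$ explicitly in $\R^3$ and then extend to $d\ge 3$ trivially: take $\v(x_1,x_2,x_3,x') := \v_3(x_1,x_2,x_3)\,\chi(x')$ with $\chi$ a cutoff in the remaining variables. This works because $\v$ has no component along $x'$, so $x'$ enters only as a parameter and both the uniqueness and non-uniqueness statements transfer fiberwise, using Fubini and test functions of product form.

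In $\R^3$ the plan is a ``fusilli'' construction: a bounded divergence-free field whose flow contains a family of trajectories that \emph{merge} forward in time (converge to a common set in finite time), and never split forward in time. The core ingredient is a two-dimensional compactly supported bounded divergence-free field $\bi$ (say on a disc) together with a non-autonomous behaviour in time. Concretely, $\bi = \nabla^\perp f$, with $f$ chosen so that the trajectories of $\bi$ spiral into a point, or onto a set of positive area where $f$ fails the weak Sard property in a one-sided way. The helical third direction $x_3$ plays the role of time: set $\v = (\bi(x_1,x_2,x_3), 1)$ in a tube $\{x_3\in(0,L)\}$ whose cross-sections rotate/shrink like fusilli, and close it up by returning the flux outside the tube so that $\v$ is compactly supported and divergence-free. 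Along the tube the equation $\d_t\rho+\dive(\rho\v)=0$ reduces, for the $x_3$-advection, to a non-autonomous planar continuity equation with ``time'' $x_3$. This lets us import the non-autonomous example of \cite[Remark 3.6]{BouchutCrippa2006}, where forward uniqueness holds but backward uniqueness fails, into an autonomous 3D field.

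The steps, in order, are as follows.
\begin{enumerate}
\item Fix a non-autonomous bounded divergence-free planar field $\bi(\tau,y)$, $\tau\in[0,1]$, with forward uniqueness but with a nontrivial solution vanishing at the final time. Rescale it to be smooth away from a single ``bad'' time $\tau=1$, where the trajectories are compressed.
\item Build the autonomous $\v$ as above and verify boundedness, compact support and $\dive\v=0$.
\item For (ii), lift the backward non-uniqueness to a stationary-in-structure solution. Transport along $x_3$ yields a bounded function $\rho$ that is transported by $\v$ but fails to be renormalized, because $\rho^2$ picks up a defect concentrated on the merging set. This gives a nontrivial $\rho\in L^\infty(I;L^\infty)$ with $\rho|_T=0$, and failure of renormalization then follows from Theorem~\ref{renormalized-solutions}.
\item For (i), prove forward uniqueness. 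Show that every $L^\infty(I;L^2)$ solution with $\rho|_0=0$ vanishes by a duality argument: construct, for every smooth test datum at time $t$, a Lipschitz-in-space solution of the backward transport equation for $\v$. This is possible because backward trajectories of $\v$ never merge, so the regular Lagrangian flow in that direction is injective and the dual (transport) problem is solvable with bounded Lipschitz-type data away from a null set. Pairing $\rho$ against this dual solution then gives $\rho=0$.
\end{enumerate}

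The main obstacle is step (i). One-sided uniqueness for a field that is not Sobolev needs a genuinely one-sided argument, since Theorem~\ref{main} shows that no symmetric criterion can work. I would first try to make the construction explicit enough that the forward flow is a measure-preserving map defined everywhere outside a null set. I would then show that any weak solution is the push-forward of its initial datum via the superposition principle, applied to $L^2$ solutions after localizing to the bounded support. Uniqueness of forward trajectories starting from a.e. point then gives uniqueness of the solution.
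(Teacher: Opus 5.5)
Your overall architecture matches the paper's. You make a planar non-autonomous block with one-sided uniqueness autonomous by using the third coordinate as a clock, $\v=(\bi(x,z),1)$. Your step for (ii) is also essentially the paper's argument: transport a Depauw-type solution along $z$, e.g.\ $\eta(t,x,z)=\1_{[0,1]}(z-t)\varrho(z,x)$, whose energy drops as the packet crosses the singular section.

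\textbf{The gap is step (i), forward uniqueness,} which is the real content of the theorem. The dual problem $\d_s\fhi+\v\cdot\nabla\fhi=0$, $\fhi(t)=\fhi_1$, is solved by $\fhi_1\circ X_{t-s}$ with $X$ the \emph{forward} flow. That is exactly the flow whose trajectories merge; the injectivity of the backward flow that you invoke plays no role. In a whirlpool construction this composition already jumps across the square boundaries, where $\v$ is discontinuous. In any construction its Lipschitz constant must blow up as the singular section is approached. Indeed, uniformly Lipschitz volume-preserving forward maps would converge to a map with Jacobian $1$ a.e. By the area formula such a map is a.e.\ injective, so it can neither merge trajectories nor dissipate energy as (ii) requires. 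A function that is Lipschitz only off a null set, with jumps, is not an admissible test function against a merely $L^2$ density.

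Your fallback does not work either. The superposition principle represents only \emph{nonnegative} solutions. A signed $L^2$ solution cannot be split into nonnegative solutions without renormalization, which is exactly what fails for this field; the shift $\rho+M$ only works for bounded $\rho$. You have also not established a.e.\ uniqueness of integral curves for this discontinuous field.

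\textbf{The missing idea} is to use the clock $\Bi_z\equiv 1$ at the level of the equation rather than of trajectories. By Lemma~\ref{transport-selection}, $f(z-t)\rho$ is again a solution with zero initial datum for any bounded compactly supported $f$. Choose $f$ so that for $t<\tau$ this product lives in $\{z<T_{n+1}\}$. Assume inductively that $\rho=0$ in $\{z<T_n\}$. Then one may replace $\Bi$ there, and in $\{z\ge T_{n+1}\}$, by the $z$-independent field $(\1_E\yu_n(x),1)$ without changing the equation. The stream function of this whirlpool array has the weak Sard property. Theorem~\ref{uniqueness-forward-and-backward} therefore gives essential skew-adjointness of $\1_E\yu_n\cdot\nabla$, and Proposition~\ref{skew-adjoint-extra-dimension} lifts it to $\d_z+\1_E\yu_n\cdot\nabla$. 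Uniqueness then forces $f(z-t)\rho=0$, hence $\rho=0$ in $\{z<T_{n+1}\}$. After all $n$ one has $\rho=0$ in $\{z\le T_\infty\}$, beyond which the field is $(0,0,1)$.

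Equivalently, your remark that the equation reduces to a planar problem with time $x_3$ can be made rigorous via the change of variables $(\lambda,\zeta)=(z-t,z)$, under which $\d_t+\d_z=\d_\zeta$. But then you need planar forward uniqueness from \emph{every} initial time in the signed $L^\infty(I;L^2)$ class, which is proved block by block exactly as above. Each block is handled by a symmetric skew-adjointness argument, so signed $L^2$ solutions cause no difficulty; the asymmetry enters only through the single section $z=T_\infty$.

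Finally, closing the field up to obtain compact support is not required by the statement and makes matters harder. The clock becomes periodic, every orbit crosses the singular section repeatedly, and the induction loses its trivial starting region. The paper only sketches this variant in Remark~\ref{uroboros}.
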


In order to prove Theorem~\ref{fusilli} we will need the following simple result:
\begin{lemma}\label{transport-selection}
	Let $\Bi \in \boldsymbol{L}^1_\loc(I\times \R^d \times \R)$
	be such that $\dive_x \Bi(t,x,z) = 0$ for a.e. $(t,z)\in I \times \R$.
	Let $\rho \in L^1_\loc(I\times \R^d \times \R)$
	be such that
	$\rho \Bi \in \boldsymbol{L}^1_\loc(I\times \R^d \times \R)$.
	Suppose that $\rho$ solves
	\begin{equation}
		\d_t \rho + \dive_x (\rho \Bi) + \d_z \rho = 0, \qquad (t,x,z)\in I \times \R^d \times \R.
	\end{equation}
	Then for any $f\in L^\infty(\R)$ with compact support the function
	\begin{equation}
		\eta(t,x,z) := f(z-t) \rho(t, x, z)
	\end{equation}
	solves
	\begin{equation}
		\label{eta-CE}
		\d_t \eta + \div_x (\eta \Bi) + \d_z \eta = 0.
	\end{equation}
\end{lemma}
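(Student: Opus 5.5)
The plan is to verify the weak formulation directly, testing against $\Phi\in C^1_c(I\times\R^d\times\R)$. Formally, $(\d_t+\d_z)f(z-t)=0$, so multiplying the equation by $f(z-t)$ should give \eqref{eta-CE}. Since $f$ is only $L^\infty$, we cannot put $f(z-t)\Phi$ into the weak formulation directly. We therefore regularize $f$ first, and then pass to the limit.

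\emph{Step 1 (smooth $f$).} Let $f\in C^1_c(\R)$ and $\Phi\in C^1_c(I\times\R^d\times\R)$. Then $\Psi(t,x,z):=f(z-t)\Phi(t,x,z)$ belongs to $C^1_c(I\times\R^d\times\R)$. Using $\Psi$ in the weak formulation for $\rho$ gives
\begin{equation*}
\iiint \rho\,(\d_t\Psi+\Bi\cdot\nabla_x\Psi+\d_z\Psi)=0 .
\end{equation*}
Compute $\d_t\Psi+\d_z\Psi=f(z-t)(\d_t\Phi+\d_z\Phi)$, because the terms $-f'(z-t)\Phi$ and $f'(z-t)\Phi$ cancel. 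Also $\nabla_x\Psi=f(z-t)\nabla_x\Phi$. Hence
\begin{equation*}
\iiint \eta\,(\d_t\Phi+\Bi\cdot\nabla_x\Phi+\d_z\Phi)=0,
\end{equation*}
which is exactly \eqref{eta-CE}. The integrability conditions $\eta,\ \eta\Bi\in L^1_\loc$ hold because $f$ is bounded. The divergence-free hypothesis on $\Bi$ is not even needed in this step.

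\emph{Step 2 (general $f$).} For $f\in L^\infty(\R)$ with compact support, take mollifications $f_k:=f*\omega_{1/k}\in C^\infty_c(\R)$. These satisfy $\|f_k\|_\infty\le\|f\|_\infty$ and $f_k\to f$ a.e. on $\R$. Then $f_k(z-t)\to f(z-t)$ for a.e. $(t,z)$, by Fubini applied to the change of variables $(t,z)\mapsto(t,z-t)$, which preserves null sets. Consequently $f_k(z-t)\to f(z-t)$ a.e. in $(t,x,z)$.

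The integrands $f_k(z-t)\,\rho\,(\d_t\Phi+\Bi\cdot\nabla_x\Phi+\d_z\Phi)$ are dominated by
\begin{equation*}
\|f\|_\infty\bigl(|\rho|\,|\d_t\Phi+\d_z\Phi|+|\rho\Bi|\,|\nabla_x\Phi|\bigr),
\end{equation*}
which is integrable because $\rho,\ \rho\Bi\in L^1_\loc$ and $\Phi$ has compact support. Dominated convergence then passes the identity of Step 1 to the limit, which proves the lemma.

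The main (mild) obstacle is this passage to the limit in Step 2: we need a.e. convergence of $f_k(z-t)$ on the product space, together with a uniform bound. Both are supplied by mollification and the measure-preserving shear $(t,z)\mapsto(t,z-t)$.
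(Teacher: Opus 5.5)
Your proposal is correct and follows the paper's proof. You test the equation for $\rho$ with $f(z-t)\Phi$, use the cancellation of the $f'(z-t)\Phi$ terms, and then remove the smoothness assumption on $f$ by approximation and dominated convergence; you also supply the details the paper leaves implicit (mollification, and a.e.\ convergence via the measure-preserving shear $(t,z)\mapsto(t,z-t)$).
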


In particular, when $\Bi$ does not depend on $t$, the following result is a direct consequence of Lemma~\ref{transport-selection}: 

\begin{corollary}\label{transport-dim-extension}
	Let $\rho\in L^\infty(\R; L^2(\R^d))$ be a solution of \eqref{CE}
	with $\v \in L^\infty(\R; \boldsymbol{L}^2(\R^d))$
	such that $\dive_x \vi(z,x) = 0$ for a.e. $z\in \R$.
	Then for any $f\in L^\infty(\R)$ with compact support the function
	\begin{equation}
		\eta(t,x,z) := f(z-t) \rho(z, x)
	\end{equation}
	solves \eqref{eta-CE} with $\Bi(t,x,z) = \vi(z,x)$.
\end{corollary}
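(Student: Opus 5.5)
The plan is to derive the corollary directly from Lemma~\ref{transport-selection}. We lift $\rho$ to a function of the three groups of variables $(t,x,z)$ that does not depend on $t$. Set
\begin{equation*}
	\tilde\rho(t,x,z) := \rho(z,x), \qquad \Bi(t,x,z) := \vi(z,x), \qquad (t,x,z)\in I\times\R^d\times\R.
\end{equation*}
Then $\eta(t,x,z) = f(z-t)\tilde\rho(t,x,z)$, so it suffices to check that $\tilde\rho$ and $\Bi$ satisfy the hypotheses of Lemma~\ref{transport-selection}.

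\emph{Step 1: integrability and divergence.} Since $\rho\in L^\infty(\R;L^2(\R^d))$, we have $\tilde\rho\in L^1_\loc(I\times\R^d\times\R)$. Likewise $\Bi\in \boldsymbol{L}^1_\loc$, because $\vi\in L^\infty(\R;\boldsymbol{L}^2(\R^d))$. For the product, Cauchy--Schwarz in $x$ gives, for a.e. $z$,
\begin{equation*}
	\int_{\R^d}|\rho(z,x)\vi(z,x)|\,dx\le \|\rho(z,\cdot)\|\,\|\vi(z,\cdot)\|,
\end{equation*}
which is essentially bounded in $z$. Hence $\tilde\rho\Bi\in\boldsymbol{L}^1(I\times\R^d\times[-R,R])$ for every $R$ whenever $I$ is bounded, and in general $\tilde\rho\Bi\in\boldsymbol{L}^1_\loc$. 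The condition $\dive_x\Bi(t,\cdot,z)=0$ for a.e. $(t,z)$ is exactly the assumption $\dive_x\vi(z,\cdot)=0$ for a.e. $z$.

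\emph{Step 2: $\tilde\rho$ solves the extended equation.} Here \eqref{CE} is understood in the variables $(z,x)$, with $z$ playing the role of time. Fix $\Phi\in C^1_c(I\times\R^d\times\R)$. We must show
\begin{equation*}
	\iiint \tilde\rho\,\bigl(\d_t\Phi+\Bi\cdot\nabla_x\Phi+\d_z\Phi\bigr)\,dx\,dz\,dt=0 .
\end{equation*}
We split this by Fubini into two contributions.
\begin{itemize}
	\item[(a)] The $\d_t\Phi$ term equals $\iint\rho(z,x)\bigl(\int_I\d_t\Phi\,dt\bigr)\,dx\,dz$. The inner integral vanishes because $\Phi$ has compact support in $t$.
	\item[(b)] The remaining terms equal
	\begin{equation*}
		\int_I\Bigl(\iint\rho(z,x)\bigl(\d_z\Phi(t,x,z)+\vi(z,x)\cdot\nabla_x\Phi(t,x,z)\bigr)\,dx\,dz\Bigr)dt.
	\end{equation*}
	For each fixed $t$, the function $(z,x)\mapsto\Phi(t,x,z)$ belongs to $C^1_c(\R\times\R^d)$, so the inner integral vanishes by \eqref{CE-D'} for $\rho$.
\end{itemize}

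\emph{Step 3: conclusion.} Applying Lemma~\ref{transport-selection} to $\tilde\rho$ and $\Bi$ yields that $\eta(t,x,z)=f(z-t)\rho(z,x)$ solves \eqref{eta-CE}. There is no real obstacle here. The only point requiring care is the local integrability of $\tilde\rho\Bi$ in Step 1, which is where the $L^2$-in-space bounds on both $\rho$ and $\vi$ are used.
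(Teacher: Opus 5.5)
Your proof is correct and follows the paper's route. The paper simply calls the corollary a direct consequence of Lemma~\ref{transport-selection}. You supply the omitted check that the $t$-independent lift $\tilde\rho(t,x,z)=\rho(z,x)$, with $\Bi(t,x,z)=\vi(z,x)$, satisfies the lemma's hypotheses: local integrability of $\tilde\rho\Bi$ via Cauchy--Schwarz in $x$, and the weak equation via Fubini, where the $\d_t\Phi$ term vanishes and the remaining terms reduce to \eqref{CE-D'} for $\rho$ at each fixed $t$.
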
 

\begin{proof}[Proof of Lemma~\ref{transport-selection}]
	Without loss of generality we may assume that $f\in C^1_c(\R)$.
	When $f$ is not smooth, one can approximate $f$ with functions from $C^1_c(\R)$ and pass to the limit in the distributional formulation of the continuity equation using dominated convergence.
	
	Since $F(t,z):=f(z-t)$ solves $\d_t F + \d_z F = 0$ and $\nabla_x F = 0$, by Leibniz rule
	for any $\Phi \in C^1_c(\R^{d+1})$ we have
	\begin{equation}
		(\d_t + \Bi \cdot \nabla_x + \d_z)(\Phi \cdot F) = F \cdot(\d_t\Phi + \Bi \cdot \nabla_x\Phi + \d_z \Phi).
	\end{equation}
	Then, denoting $\fhi(t,x,z):= f(z-t) \Phi(t,x,z)$, we have
	\begin{multline}
		\iiint_{\R\times\R^d\times \R}(\d_t \fhi(\cdot,z) + \Bi \cdot \nabla_x \fhi(\cdot,z) + \d_z \fhi(\cdot,z)) \rho(t,x,z) \, dx\,dt\,dz
		=\\
		\iiint_{\R\times\R^d\times \R} (\d_t \Phi + \Bi \cdot \nabla_x \Phi + \d_z \Phi) f(z-t) \rho(t,x,z) \, dt \, dx\, dz.
	\end{multline}
	The left-hand side is zero by the definition of weak solution of the continuity equation.
\end{proof}

\begin{proof}[Proof of Theorem~\ref{fusilli}]
	{\spaceskip=0.3em plus 0.4em minus 0.05em%
	The first building block of the construction is the \emph{steady whirlpool} ${\vv \colon \R^2 \to \R^2}$ given by}
	\begin{equation}
		\vv(x) = \nabla^\perp f(x),
		\qquad\text{where}\qquad f(x) = \frac{1}{2} \min\left(\|x\|_\infty^2, \frac{1}{4}\right).
		\label{whirlpool}
	\end{equation}
	Let $\rho$ denote the solution of the Cauchy problem for the continuity equation
	with the vector field $\vv$
	and the initial condition $\rho|_0 = \1_{[-1/2,1/2]^2} \cdot \sigma$, where
	\begin{equation}
		\label{plus-minus-one}
		\sigma(x) = \sign(\sin (\pi x_1)).
	\end{equation}
	Then $\vv$ has the following \emph{stirring property}: $\rho|_4(x)=\rho_0(-x)$ for all $x\in [-1/2,1/2]^2$
	(see Figure~\ref{whirlpool-stirring-fig}).
	
	Furthermore, it is easy to see that the stream function $f$ of $\vv$ has the weak Sard property; we will use this observation later.
	
	\begin{figure}[h!]
		\begin{tikzpicture}[scale=3,every node/.style={scale=1}]
	\foreach \x in {1/6,2/6,3/6} {
		%		\draw[-latex] (\x,-\x) -- (\x,\x) -- (-\x, \x) -- (-\x, -\x) -- cycle;
		\draw[-latex] (\x,-\x) -- (\x,\x);
		\draw[-latex] (\x,\x) -- (-\x, \x);
		\draw[-latex] (-\x, \x) -- (-\x, -\x);
		\draw[-latex] (-\x, -\x) -- (\x,-\x);
	}
	\tikzmath{\a=1/2;\b=\a*4/3;}
	\draw[-latex] (-\b,0) -- (\b,0) node[above] {$x_1$};
	\draw[-latex] (0,-\b) -- (0,\b) node[right] {$x_2$};
	\node[below right] at (1/2,0) {$\frac12$};
	\draw[fill] (1/2, 0) circle (.01) node[below right] {$\frac12$};
	\draw[fill] (0, 1/2) circle (.01) node[above left] {$\frac12$};
	\draw[dashed] (-\a,-\a) -- (\a, \a);
	\draw[dashed] (-\a,\a) -- (\a, -\a);
\end{tikzpicture}
		\tdplotsetmaincoords{50}{20}%
\begin{tikzpicture}[scale=2,tdplot_main_coords]
	\tdplotsetrotatedcoords{0}{90}{90}
	\begin{scope}[tdplot_rotated_coords]
		\tikzmath{
			\a=1/2;
			\b=\a*5/3;
			function xxx(\t) {
				%			return max(-1/2,min(1/2,-2*asin(cos(pi*\t/4)/pi)));
				return max(-1/2,min(1/2,-asin(sin(90*\t/2))/90));
			};
			function yyy(\t) {
				%			return max(-1/2,min(1/2,2*asin(cos(pi*\t/4))/pi));
				return max(-1/2,min(1/2,asin(cos(90*\t/2))/90));
			};
		}
		\coordinate (O) at (0,0,0);
		\foreach \s in {0,...,9} {
			\tikzmath{
				\dt=4/9;
				\t=\s*\dt;
				\xa=xxx(\t);
				\ya=yyy(\t);
				\xb=xxx(ceil(\t));
				\yb=yyy(ceil(\t));
				\xc=xxx(ceil(\t) + 1);
				\yc=yyy(ceil(\t) + 1);
				\xd=xxx(ceil(\t) + 2);
				\yd=yyy(ceil(\t) + 2);
				\xe=xxx(ceil(\t) + 3);
				\ye=yyy(ceil(\t) + 3);
				\xf=xxx(\t+4.0);
				\yf=yyy(\t+4.0);
			}
			\fill[red!40, opacity=0.9] (\xa,\ya,\t) -- (\xb,\yb,\t) -- (\xc,\yc,\t) -- (\xd,\yd,\t) -- (\xe,\ye,\t) -- (\xf,\yf,\t) -- cycle;
			\tikzmath{
				\p=\t+4;
				\xa=xxx(\p);
				\ya=yyy(\p);
				\xb=xxx(ceil(\p));
				\yb=yyy(ceil(\p));
				\xc=xxx(ceil(\p) + 1);
				\yc=yyy(ceil(\p) + 1);
				\xd=xxx(ceil(\p) + 2);
				\yd=yyy(ceil(\p) + 2);
				\xe=xxx(ceil(\p) + 3);
				\ye=yyy(ceil(\p) + 3);
				\xf=xxx(\p+4);
				\yf=yyy(\p+4);
			}
			\fill[green!40, opacity=0.9] (\xa,\ya,\t) -- (\xb,\yb,\t) -- (\xc,\yc,\t) -- (\xd,\yd,\t) -- (\xe,\ye,\t) -- (\xf,\yf,\t) -- cycle;
			\ifthenelse{\equal{\s}{9}}{}{%
				\draw (0,0,\t) -- (0,0,\t+\dt);
			}
		}
		\draw[-latex] (-\b,0,0) -- (\b,0,0) node[above right] {$x_1$};
		\draw[-latex] (0,-\b,0) -- (0,\b,0) node[above] {$x_2$};
		\draw[-latex] (0,0,4) -- (0,0,4.5) node[right] {$t$};
		\node at (0,0,4) {$\bullet$};
		\node[above right] at (0,0,4) {$4$};
	\end{scope}
\end{tikzpicture}
		\caption{
			Steady whirlpool $\vv$ given by \eqref{whirlpool} and the solution of the associated continuity
			equation with the initial data \eqref{plus-minus-one}.}
			\label{whirlpool-stirring-fig}
	\end{figure}
	
	Let $E:=[-1,1]^2$. Let $\yu \colon \R^2 \to \R^2$ be a $2$-periodic (with respect to each variable) vector field
	such that for all $(x_1, x_2) \in E$
	\begin{equation}
		\yu(x_1, x_2) := \vv(x_1, x_2 - \tfrac12) + \vv(x_1, x_2 + \tfrac12).
	\end{equation}
	(See Figure~\ref{periodic-whirlpool-fig}.)
	
	\begin{figure}[h!]
		\begin{tikzpicture}[scale=3,every node/.style={scale=1}]
	\tikzmath{\x=1;}
	\draw[] (\x,-\x) -- (\x,\x) -- (-\x, \x) -- (-\x, -\x) -- cycle;
	\begin{scope}[shift={($(0,1/2)$)}]
		\tikzmath{\x=1/2;}
		\draw[] (\x,-\x) -- (\x,\x) -- (-\x, \x) -- (-\x, -\x) -- cycle;
		\foreach \x in {1/6,2/6} {
			\draw[-latex] (\x,-\x) -- (\x,\x);
			\draw[-latex] (\x,\x) -- (-\x, \x);
			\draw[-latex] (-\x, \x) -- (-\x, -\x);
			\draw[-latex] (-\x, -\x) -- (\x,-\x);
		}
		\draw[dashed] (-\x,-\x) -- (\x, \x);
		\draw[dashed] (-\x,\x) -- (\x, -\x);
	\end{scope}
	\begin{scope}[shift={($(0,-1/2)$)}]
		\tikzmath{\x=1/2;}
		\draw[] (\x,-\x) -- (\x,\x) -- (-\x, \x) -- (-\x, -\x) -- cycle;
		\foreach \x in {1/6,2/6} {
			\draw[-latex] (\x,-\x) -- (\x,\x);
			\draw[-latex] (\x,\x) -- (-\x, \x);
			\draw[-latex] (-\x, \x) -- (-\x, -\x);
			\draw[-latex] (-\x, -\x) -- (\x,-\x);
		}
		\draw[dashed] (-\x,-\x) -- (\x, \x);
		\draw[dashed] (-\x,\x) -- (\x, -\x);
	\end{scope}
	\tikzmath{\a=1;\b=\a*1.1;}
	\draw[-latex] (-\b,0) -- (\b,0) node[above] {$x_1$};
	\draw[-latex] (0,-\b) -- (0,\b) node[right] {$x_2$};
	\node[below right] at (1/2,0) {$\frac12$};
	\draw[fill] (1, 0) circle (.01) node[below right] {$1$};
	\draw[fill] (0, 1) circle (.01) node[above left] {$1$};
\end{tikzpicture}
		\caption{The set $E$ and the vector field $\yu$.}
		\label{periodic-whirlpool-fig}
	\end{figure}
	Let $\rho$ denote the solution of the Cauchy problem for the continuity equation
	with the vector field $\yu$
	and the initial condition $\rho|_0 = \1_{E} \cdot \sigma$, where
	$\sigma$ is given by \eqref{plus-minus-one}.
	By linearity of \eqref{CE} this Cauchy problem can be easily reduced
	to the Cauchy problem for \eqref{CE} with the vector field $\vv$.
	Then, by the stirring property of $\vv$, one can conclude that $\yu$
	has the following \emph{stirring property}:
	\begin{equation}
		\label{yu-stirring}
		\rho|_{4}(x) = \1_E(x) \cdot \sigma(2 x_1, x_2).
	\end{equation}

	Let $T_0:=0$ and for any $n\in \N$ let
	\begin{equation}
		T_n := 4 \sum_{k=0}^{n-1} 2^{-k}
		\qquad\text{and}\qquad
		T_\infty:= \lim_{n\to \infty} T_n = 8.
	\end{equation}
	Let us define the map $\bi\colon \R^3 \to \R^2$ by
	\begin{equation}
		\bi(x_1, x_2, z) :=
		\begin{cases}
			\1_{E} \cdot \yu_n(x_1,x_2), & \text{if } z\in [T_{n}, T_{n+1}) \text{ for some } n\in \Z_+;\\ 
			% TODO: define \Z_+ in Notation
			0 & \text{otherwise}
		\end{cases}
	\end{equation}
	(see Figure~\ref{b-fig}), where
	\begin{equation}
		\yu_n(x_1, x_2) := \yu(2^{n}(x_1+1) - 1, 2^{n} x_2).
	\end{equation}
	
	\begin{figure}[h!]
		\begin{tikzpicture}[scale=1.8,every node/.style={scale=1}]
\begin{scope}[yscale=1.1]
\draw (0,-1) rectangle (8,1);
\foreach \n [
	evaluate=\n as \side using 2^(1-\n),
	evaluate=\side as \tt using 4*\side+\tt,
	remember=\tt as \tt (initially 0),
	evaluate=\n as \m using 2^(\n-1),
	remember=\m as \m
	] in {1,...,5} {
	\foreach \k in {1,...,\m} {
		\node (A) at (\tt,-1+2*\k*\side-\side-\side/2) {};% {$\bullet$};
		\draw[fill=blue!20] (A) rectangle ++(-4*\side,\side);
	}
	\ifthenelse{\n < 3}{%
		\draw[fill] (\tt,0) circle (0.02) node[above left] {$T_\n$};
	}{}
}
\draw[fill] (0,0) circle (0.02) node[above right] {$0$};
\draw[-latex] (0,0) -- (0,1.2) node[above] {$x_1$};
\draw[-latex] (0,0) -- (8.2,0) node[above] {$z$};
\foreach \n in {1,...,3} {
	\draw[fill] (\n,0) circle (0.02) node[below] {$\n$};
}
\node[below right] at (0,1) {$1$};
\node[above right] at (0,-1) {$-1$};
\end{scope}
\end{tikzpicture}
		\caption{The (support of the) vector field $\bi$.}
		\label{b-fig}
	\end{figure}
	
	Now let us consider the continuity equation in $\R^2$ with the (non-autonomous) vector field $\vi(t,x_1,x_2) = \bi(x_1, x_2, t)$.
	Let $\varrho$ denote the solution of the associated Cauchy problem with the initial condition $\varrho|_0 = \varrho_0$,
	where
	\begin{equation}
		\varrho_0 = \1_{E} \cdot \sigma,
	\end{equation}
	with $\sigma$ given by \eqref{plus-minus-one}.
	Using the stirring property of $\yu$ \eqref{yu-stirring} one can see that for all $n\in \Z_+$
	\begin{equation}
		\varrho|_{T_n}(x) = \1_E(x) \cdot \sigma(2^n x_1, x_2).
	\end{equation}
	The sequence $\{\varrho|_{T_n}\}$ converges to zero weakly in $L^2(\R^2)$ as $n\to \infty$.
	Hence $\varrho|_{T_\infty} = 0$ and therefore $\varrho(t, \cdot) = 0$ for almost all $t>T_\infty$.
	
	Extending $\varrho$ with the initial condition for negative times (see Remark~\ref{extension-with-zero}) we obtain a solution of \eqref{CE}
	with the following energy profile:
	\begin{equation}
		\label{Depauw-energu-profile}
		\int_{\R^2} \varrho^2(t,x) \, dx = \begin{cases}
			4, & t \le T_\infty, \\
			0, & t > T_\infty.
		\end{cases}
	\end{equation}
	By Corollary~\ref{transport-dim-extension} the function
	\begin{equation}
		\eta(t,x,z) = \1_{[0,1]}(z-t) \varrho(z,x) = \begin{cases}
			\varrho(z,x), & t\le z \le t+1,\\
			0, & \text{otherwise}
		\end{cases}
	\end{equation}
	solves the continuity equation in $\R^3$ with the autonomous vector field
	\begin{equation}
		\Bi(x,z) = \begin{pmatrix}
			b_1(x,z)\\
			b_2(x,z)\\
			1
		\end{pmatrix},
	\end{equation}
	where $b_1$ and $b_2$ are the components of $\bi$.
	
	By \eqref{Depauw-energu-profile} it is easy to see that
	\begin{equation}
		\label{energy-profile-eta}
		\int_{\R}\int_{\R^2} \eta^2(t, x, z) \, dx \, dz
		= \begin{cases}
			4 & t < T_\infty-1, \\
			4(T_\infty-t), & T_\infty-1 \le t < T_\infty \\
			0, & t \ge T_\infty.
		\end{cases}
	\end{equation}
	Since $\eta$ is a nontrivial solution and $\eta=0$ for $t>T_\infty$, 
	we have no uniqueness backward in time for the continuity equation with $\Bi$.
	And since the renormalization property implies conservation of energy (by Theorem~\ref{renormalized-solutions}),
	which in our case evidently fails by \eqref{energy-profile-eta}, $\Bi$ does not have the renormalization property.
	
	It remains to show that if $\rho\in L^\infty(I; L^2(\R^3))$ solves the Cauchy problem for the continuity equation with the vector field $\Bi$ and satisfies the initial condition $\rho|_0 = 0$, then $\rho =0$.
	
	We are going to prove that for any non-negative integer $n$ the solution $\rho$ vanishes in the halfspace
	$I\times H_n^-$, where $H_{n}^- := \{(x,z) \in \R^2 \times \R : z < T_n\}$. 
	Let us also denote $H_{n}^+ := \{(x,z) \in \R^2 \times \R : z \ge T_n\}$.
	
	By Lemma~\ref{transport-selection}
	for any $f\in C^1_c(\R)$ the function
	$\eta(t,x,z) = f(z-t) \rho(t,x,z)$ also solves the continuity equation
	and satisfies the initial condition $\eta|_0 = 0$.
	Observe that for all $t\in I$ the support of $z\mapsto f(z-t)$ is $t + \supp f$.
	
	First let us consider $n=0$. Then for any $\tau \in I$ for any $\psi \in C^1_c(\R)$
	with $\supp \psi \subset (-\infty, 0)$ we can take $f(z) := \psi(\tau + z)$
	and then for all $t < \tau$ the function $\eta(t, \cdot)$
	vanishes in the halfspace $H_{0}^+$.
	Then one can redefine $\Bi$ to $(0,0,1)$ in the halfspace $H_0^+$, and $\eta$ will
	still solve the continuity equation (for $t<\tau$).
	But then $\Bi \equiv (0,0,1)$ in $\R^3$,
	and by the classical theory $\eta$ vanishes. 
	By arbitrariness of $\psi$ (and $\tau$) we conclude that $\rho$ vanishes in $I\times H_0^-$.
	
	Since $\rho$ vanishes in $I \times H_0^-$, we can redefine in $H_0^-$ the vector field $\Bi$ as follows:
	\begin{equation}
		\Bi(x, z) := \Bi(x, \xi) = \begin{pmatrix} \yu(x) \\ 1\end{pmatrix},
	\end{equation}
	where the constant $\xi \in (T_0, T_1)$ is arbitrary (by the construction $\Bi$
	does not depend on $z$ when $z\in (T_{n}, T_{n+1})$ with a fixed $n\in \Z_+$).
	
	For any $\tau \in I$ for any $\psi \in C^1_c(\R)$
	with $\supp \psi \subset (-\infty, 0)$ we can take $f(z) := \psi(\tau + z)$
	and then for all $t < \tau$ the function $\eta(t, \cdot)$
	vanishes in the halfspace $H_{1}^+$. Then we can extend~$\Bi$ from $H_0^-$ (or, equivalently, from $H_1^-$)
	to $H_1^+$ in the same way. The resulting vector field does not depend on $z$,
	and the stream function of the corresponding vector field $\yu$
	has the weak Sard property (as we already observed for a single whirlpool).
	Hence by Theorem~\ref{uniqueness-forward-and-backward} the operator $\yu \cdot \nabla$ is essentially skew-adjoint. 
	By Proposition~\ref{skew-adjoint-extra-dimension}  the operator $\Bi \cdot \nabla$ (with the redefined vector field $\Bi$) is essentially skew-adjoint, hence we have uniqueness for the
	Cauchy problem for the associated continuity equation. 
	Hence $\eta$ vanishes.
	By arbitrariness of $\psi$ (and $\tau$) we conclude that $\rho$ vanishes in $I\times H_1^-$.
	
	Proceeding by induction, we conclude that for all $n\in \Z_+$ 
	the solution $\rho$ vanishes in $I\times H_n^-$.
	This means that $\rho$ vanishes in the halfspace $\{(t,x,z)\in I \times \R^2 \times \R : z\le T_\infty\}$.
	Then arguing as in the case $n=0$ we can redefine $\Bi$ to the constant vector $(0,0,1)$
	in the whole space and conclude that $\rho$ vanishes in $I\times \R^3$.
\end{proof}

\begin{remark}\label{uroboros}
	In the statement of Theorem~\ref{fusilli} we do not claim that the vector field $\vi$
	has compact support. In fact the constructed vector field $\Bi$ does not have compact support.
	However the construction can be modified in order to achieve that, in addition, the resulting vector field~$\vi$ has compact support (see Figure~\ref{uroboros-fig}).
	Since the renormalization property fails for $\vi$, the corresponding operator $\vi \cdot \nabla$
	is not essentially skew adjoint.
\end{remark}

\begin{figure}[h!]
	\begin{tikzpicture}[scale=0.5,every node/.style={scale=1}]
	\tikzmath{
		\x=-8;
		\y=3;
		\aa=12;
		\b=4;
	}
	\draw (-\aa,-\b) rectangle (\aa,\b);
	\draw (-\aa+2,-\b+2) rectangle (\aa-2,\b-2);
	\draw (-\aa,-\b) -- (-\aa+2,-\b+2);
	\draw (-\aa,\b) -- (-\aa+2,\b-2);
	\draw (\aa,\b) -- (\aa-2,\b-2);
	\draw (\aa,-\b) -- (\aa-2,-\b+2);
	\draw[-latex] (0, \b-1) -- (\aa-1,\b-1);
	\draw[-latex] (\aa-1,\b-1) -- (\aa-1,-\b+1);
	\draw[-latex] (\aa-1,-\b+1) -- (-\aa+1,-\b+1);
	\draw[-latex] (-\aa+1,-\b+1) -- (-\aa+1,\b-1);
	\draw[-latex] (-\aa+1,\b-1) -- (\x,\y);
	\begin{scope}
%		\draw(\x,\y-1) rectangle (\x+8,\y+1);
		\draw(\x+8,\y+1) -- (\x+8,\y-1);
		\foreach \n [
		evaluate=\n as \side using 2^(1-\n),
		evaluate=\side as \tt using 4*\side+\tt,
		remember=\tt as \tt (initially 0),
		evaluate=\n as \m using 2^(\n-1),
		remember=\m as \m
		] in {1,...,5} {
			\foreach \k in {1,...,\m} {
				\node (A) at (\x+\tt,\y-1+2*\k*\side-\side-\side/2) {};% {$\bullet$};
				\draw (A) rectangle ++(-4*\side,\side);
			}
		}
	\end{scope}
\end{tikzpicture}
	\caption{A sketch of the construction discussed in Remark~\ref{uroboros}.}
	\label{uroboros-fig}
\end{figure}

\begin{remark}\label{sum-of-skadj-is-not}
	Let $\boldsymbol{w}$ denote the vector field obtained from $\vi$ in the previous remark by replacing the vector field $\bi$ in the construction of $\Bi$ with zero. 
	Let $\boldsymbol{u} := \vi - \boldsymbol{w}$.
	Then $(\vi \cdot \nabla) = (\boldsymbol{u} \cdot \nabla) + (\boldsymbol{w} \cdot \nabla)$
	and one can prove that the operators $(\boldsymbol{u} \cdot \nabla)$ and $(\boldsymbol{w} \cdot \nabla)$
	(defined on $C^1_c(\R^3)$) are essentially skew adjoint, while their sum is not (by Remark~\ref{uroboros}).
\end{remark}

\section{Stability of solutions}

% !TeX root = skew-adjoint.tex

In this section we show that some of the stability results established in \cite[Theorem 11.4]{DiPernaLions1989}
can be easily recovered using the theory of semigroups.

\begin{theorem}
	Suppose that $\{\vi_n\} \subset \boldsymbol{L}^2(\R^d)$ is a sequence of divergence-free vector fields
	and $\vi_n \to \vi$ in $\boldsymbol{L}^2(\R^d)$ as $n\to \infty$.
	Suppose that the operators $A_n \rho := (\vi_n \cdot \nabla) \rho$ and $A\rho := (\vi \cdot \nabla) \rho$ on $L^2(\R^d)$ with the domains $D(A_n) = D(A) = C^1_c(\R^d)$ are essentially skew-adjoint.
	Suppose that $\theta_n \to \theta$ in $L^2(\R^d)$ and let $\rho_n \in L^\infty(0,T; L^2(\R))$ denote the sequence of solutions of \eqref{CE} associated with $\vi_n$ such that $\rho_n|_0 = \theta_n$. 
	Then $\rho_n \to \rho$ in $L^\infty(0,T; L^2(\R^d))$ as $n\to \infty$, where $\rho$ is the solution of \eqref{CE} with $\rho|_0 = \theta$.
\end{theorem}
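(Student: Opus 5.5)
The plan is to recast everything in terms of unitary groups and invoke the Trotter--Kato approximation theorem in its resolvent form. Let $\overline{A_n}$ and $\overline{A}$ be the closures. By assumption these are skew-adjoint, so by Stone's theorem $-\overline{A_n}$ and $-\overline{A}$ generate unitary $C_0$-groups $S^n_t$ and $S_t$ on $L^2(\R^d)$. By Corollary~\ref{strongly-cont-version-is-given-by-group}, applied to $\vi_n$ and to $\vi$, the solutions are unique and their strongly continuous versions are $\rho_n(t)=S^n_t\theta_n$ and $\rho(t)=S_t\theta$. That corollary was stated for compactly supported fields; if needed, I would instead rerun its argument through Proposition~\ref{adjoint-operator-extends-generator}, which needs no compact support, together with the uniqueness furnished by skew-adjointness. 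The estimate then splits as
\[
\|\rho_n(t)-\rho(t)\| \le \|S^n_t(\theta_n-\theta)\| + \|S^n_t\theta - S_t\theta\| = \|\theta_n-\theta\| + \|S^n_t\theta-S_t\theta\|,
\]
so it suffices to show $\sup_{t\in[0,T]}\|S^n_t\theta-S_t\theta\|\to0$ for every fixed $\theta$.

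For this I would use the Trotter--Kato theorem. The groups are uniformly bounded, since all are contractions. It is enough to verify that $\overline{A_n}\fhi\to\overline{A}\fhi$ for $\fhi$ in a core of $\overline{A}$, provided the range condition holds, i.e. $(E+\overline{A})$ has dense range on that core. Take the core $D=C^\infty_c(\R^d)$. Since $\overline{A}$ is the closure of $A$ restricted to $D$, $D$ is a core. Moreover $\overline{A}$ is skew-adjoint, so $E+\overline{A}$ is surjective on $D(\overline{A})$, and hence $(E+\overline{A})D$ is dense. Passing from $C^1_c$ to $C^\infty_c$ domains changes nothing, because both have the same closure.

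The core convergence step is where the $L^2$ convergence of the fields enters. For $\fhi\in C^\infty_c$,
\[
\|A_n\fhi-A\fhi\| = \|(\vi_n-\vi)\cdot\nabla\fhi\| \le \|\nabla\fhi\|_\infty\|\vi_n-\vi\|_2 \to 0.
\]
Trotter--Kato then gives strong resolvent convergence, and hence $S^n_t\theta\to S_t\theta$ uniformly for $t$ in compact sets, in particular on $[0,T]$. The same holds for negative times.

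The main obstacle is justifying this form of Trotter--Kato, where convergence on a common core of the limit generator suffices, rather than assuming a priori resolvent convergence. If I wanted to avoid quoting it, I would argue directly. Given $g$, approximate it by $(E+A)\fhi$ with $\fhi\in D$, using the density of the range. Then
\[
(E+\overline{A_n})^{-1}g-(E+\overline{A})^{-1}g = (E+\overline{A_n})^{-1}\bigl[(A-A_n)\fhi + (g-(E+A)\fhi)\bigr] + \bigl(\fhi-(E+\overline{A})^{-1}g\bigr),
\]
and every term is small because the resolvents have norm at most $1$. Strong resolvent convergence then transfers to the groups through the standard Trotter--Kato argument. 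Finally, the $L^\infty(0,T;L^2)$ norm equals the sup over $t$ of the strongly continuous versions, up to null sets, which finishes the proof.
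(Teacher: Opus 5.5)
Your proof is correct. It shares the paper's outer structure: identify $\rho_n(t)=S^n_t\theta_n$ and $\rho(t)=S_t\theta$ via Corollary~\ref{strongly-cont-version-is-given-by-group}, split off $S^n_t(\theta_n-\theta)$, and reduce to strong resolvent convergence plus Trotter--Kato. Where you differ is in how you get the resolvent convergence.

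The paper argues by compactness. The elements $u_n=(\lambda E-\overline{A_n})^{-1}f$ are bounded, and any weak limit $w$ satisfies $\lambda(\fhi,w)+(A\fhi,w)=(\fhi,f)$ for all test functions. The identity $A^*=-\overline{A}$ then forces $w=u$. Finally, weak convergence is upgraded to strong convergence through $\lambda\|u_n\|^2=(f,u_n)$, which uses $(\overline{A_n}u_n,u_n)=0$.

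You instead use the core (graph-limit) criterion. Since $(E+\overline{A})C^\infty_c$ is dense, your resolvent identity gives directly
\[
\bigl\|(E+\overline{A_n})^{-1}g-(E+\overline{A})^{-1}g\bigr\|\le 2\|g-(E+A)\fhi\|+\|\nabla\fhi\|_\infty\|\vi_n-\vi\|_2 .
\]
This needs no subsequences and no weak-to-strong upgrade. It only uses contractivity of the resolvents of $\overline{A_n}$ and the range condition for the limit operator, and it is quantitative.

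The paper's route deliberately mirrors the DiPerna--Lions stability scheme, which is the stated point of that section: pass to the limit in the distributional formulation, invoke uniqueness for the limit problem, then use norm conservation to get strong convergence. Yours is the shorter textbook semigroup argument.

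Your caveat about compact support in Corollary~\ref{strongly-cont-version-is-given-by-group} applies equally to the paper, which cites the corollary without comment. Your sketch of a fix leaves the source of uniqueness vague, since the paper's uniqueness goes through Theorem~\ref{renormalized-solutions}, which uses compact support. A clean way to close the gap is a duality argument. First extend the weak formulation to test functions in $D(\overline{A})$ by graph-norm approximation. Then, for $\psi\in D(\overline{A})$, the map $\tau\mapsto(\rho(\tau),S_{\tau-t}\psi)$ has zero derivative. Evaluating it at $\tau=t$ and $\tau=0$ gives $\rho(t)=S_t\rho(0)$ for every solution in $L^\infty(0,T;L^2)$, with no renormalization and no support assumption.
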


\begin{proof}
	Let $H:= L^2(\R)$.
	Let $\overline{A}$ and $\overline{A_n}$ denote the closures of $A$ and $A_n$ respectively.
	Let $S_t$ and $S^n_t$ denote the $c_0$-groups generated by $-\overline{A}$ and $-\overline{A_n}$ respectively.
	In view of Corollary~\ref{strongly-cont-version-is-given-by-group} it is sufficient to prove that $S^n_t \theta_n$ converges in $C([0,T]; H)$ to $S_t \theta$ as $n\to\infty$.
	Since
	\begin{equation*}
		S^n_t \theta_n - S_t \theta = S^n_t (\theta_n -\theta) + (S^n_t \theta - S_t \theta)
	\end{equation*}
	and $\|S^n_t (\theta_n - \theta)\| \le \|\theta_n - \theta\|\to 0$ as $n\to \infty$,
	it remains to show that the second term in the above equality tends to zero.
	
	Let $E$ denote the identity operator on $H$.
	We are going to prove that for any $\lambda \in \R\setminus \{0\}$
	the resolvents $(\lambda E - \overline{A_n})^{-1}$ converge strongly to $(\lambda E - \overline{A})^{-1}$ as $n \to \infty$.
	Then by the Trotter--Kato theorem $\|S^n_t \theta - S_t \theta\| \to 0$ uniformly on $[0,T]$ as $n\to \infty$.
	
	Suppose that $f \in H$, $u_n \in D(\overline{A_n})$ and $u \in D(\overline{A})$ are such that
	\begin{align}
		\lambda u_n - \overline{A_n} u_n &= f, \notag\\
		\lambda u - \overline{A} u &= f \label{Alambda-rho=f}.
	\end{align}
	Since the operator $\overline{A}$ is skew-adjoint, we have
	\begin{equation}
		\|f\|^2 = \|(\lambda E - \overline{A})u\|^2 =  \lambda^2\|u\|^2 + \|\overline{A} u\|^2.
	\end{equation}
	Hence $|\lambda|\|u\| \le \|f\|$ and similarly for all $n\in \N$ we have $|\lambda|\|u_n\|\le \|f\|$.
	Then by sequential weak compactness of the closed unit ball in $H$ there exists a subsequence $\{u_{n_k}\}$
	which converges weakly to $w \in H$. 
	On the other hand, since the operators $\overline{A_n}$ are skew-adjoint, for all $n\in \N$ we have
	$\lambda u_n + A_n^* u_n = f$ and consequently for any $\fhi \in C^1_c(\R^d)$
	\begin{equation}\label{test-resolvent}
		\lambda (\fhi, u_n) + ({A_n} \fhi, u_n) = (\fhi, f).
	\end{equation}
	Since $\vi_n \to \vi$ strongly in $\boldsymbol{L}^2(\R^d)$ as $n\to \infty$, it follows that $A_n \fhi \to A \fhi$ strongly in $H$.
	Since $u_{n_k} \to w$ weakly in $H$ as $k\to \infty$, it follows
	that $({A_{n_k}} \fhi, u_{n_k}) \to ({A} \fhi, w)$.
	So substituting $n=n_k$ in \eqref{test-resolvent} and passing to the limit as $k\to \infty$ we obtain that
	\begin{equation*}
		\lambda (\fhi, w) + ({A}\fhi, w) = (\fhi, f).
	\end{equation*}
	By arbitrariness of $\fhi\in C^1_c(\R^d)$ it follows that $w \in D(A^*)$ and $\lambda w + A^* w = f$, or in other words
	\begin{equation}\label{Alambda-eta=f}
		\lambda w - \overline{A} w = f.
	\end{equation}
	 Subtracting \eqref{Alambda-rho=f} from \eqref{Alambda-eta=f}, we obtain
	\begin{equation*}
		\lambda(w - u) - \overline{A} (w - u) = 0,
	\end{equation*}
	hence $w = u$ by invertibility of $(\lambda E - \overline{A})$.
	Since the arguments above can be applied to any subsequence of $u_n$, it follows that the whole sequence $u_n$ converges weakly to $u$ as $n\to \infty$.
	It remains to prove that this convergence is strong.
	
	Since $\overline{A}$ is skew-adjoint, $(u, \overline{A} u) = 0$.
	Then by \eqref{Alambda-rho=f} we have
	\begin{equation}\label{norm-as-inner-prod}
		\lambda\|u\|^2 = (f, u)
		\quad\text{and similarly}\quad
		\lambda \|u_n\|^2 = (f, u_n).
	\end{equation}
	Since $u_n \to u$ weakly as $n\to \infty$, it follows from \eqref{norm-as-inner-prod} that $\|u_n\|^2 \to \|u\|^2$.
	It is well-known that, together with weak convergence of $u_n$ to $u$, this implies that $u_n$ converges to $u$ strongly in~$H$.
\end{proof}

Let us mention some results which are related to the assumption that the operator $A$, corresponding to the limit vector field $\vi$, is essentially skew-adjoint.
In \cite{CCS2019} it was proved that there exist an autonomous divergence-free vector field $\vi \in \boldsymbol{L}^{4/3}(\R^3)$ and a sequence of smooth autonomous divergence-free vector fields $\vi_n$ converging to $\vi$, such that the sequence of the corresponding approximate solutions $\rho_n$ has two distinct limits (in $L^1_{\loc}$).
In \cite{DLG2021} it was proved that for $d\ge 2$ there exists an autonomous compactly supported bounded divergence-free vector field $\vi$ and initial data $\rho_0 \in C^\infty_c(\R^{d+1})$ such that there exist
two sequences of compactly supported divergence-free smooth vector fields $\vi_n$ and $\vv_n$
converging to $\vi$ strongly in $L^1$ as $n\to \infty$ such that the corresponding solutions $\rho_n$ and $\theta_n$ of the continuity equation for $\vi_n$ and $\vv_n$ converge respectively to two distinct solutions of the continuity equation for $\vi$ with the initial condition $\rho_0$.
On the other hand, in \cite{Pitcho2024} it was proved that under some additional regularity assumptions on the non-autonomous divergence-free vector field $\vi$ (which are in general too weak for the renormalization property) the solutions of the continuity equation associated with the standard mollifications of $\vi$ converge in the sense of distributions to a unique limit.

\section{Acknowledgements}\label{sec:ack}
The work of N.G. and K.Z. was supported by the RSF project 24-21-00315.

\appendix
\section{Uniqueness of measure-preserving flows}\label{a:uniqueness-of-mpf}

%\ngnote{На этот раздел можно будет сослаться из введения при обсуждении истории вопроса.}

Let $p\ge 1$.

\begin{definition}
	A family $\{X_t\}_{t\in \R}$ of maps $X_t\colon \R^d \to \R^d$
	is called a \emph{group of measure-preserving transformations}, if
	\begin{enumerate}
		\item[1.] the map $t\mapsto (X_t - \operatorname{id})$ belongs to $C(\R;L^p(\R^d; \R^d))$;
		\item[2.] for all $t,s\in \R$ we have $X_s \circ X_t = X_{s+t}$ a.e.;
		\item[3.] for all $t\in \R$ we have $(X_t)_\# \L^d = \L^d$.
		%\item[4.] for all $t\in \R$ we have
		%\begin{equation}
		%	X_t(y) = y + \int_0^t \v(X_s(y)) \, ds \label{integral-curves}
		%\end{equation}
		%in the following sense: for any $\fhi \in C_c^\infty(\R^d)$
		%\begin{equation*}
		%	\int_{\R^d} X_t(y) \fhi(y) \, dy = \int_{\R^d} y \fhi(y) \, dy
		%	+ \int_0^t \int_{\R^d} \vi(y) \fhi(X_{-s}(y)) \, dy \, ds.
		%\end{equation*}
		%% \\ (in $\mathscr D'(\R^d_y)$);
	\end{enumerate}
\end{definition}

\begin{lemma}\label{comp-with-measure-preserving-transformations}
	Let $\{X_t\}_{t\in \R}$ be a group of measure-preserving transformations.
	Then for any $f \in L^p(\R^d)$ and for all $t\in \R$ we have $f\circ X_t \in L^p(\R^d)$ and $\|f\circ X_t\|_p = \|f\|_p$.
	Furthermore, the map $t\mapsto f\circ X_t$ belongs to $C(\R; L^p(\R^d))$.
\end{lemma}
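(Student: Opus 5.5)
Fix $t\in\R$. The plan is to get the norm identity directly from measure preservation and the continuity in $t$ by a density argument, using only properties 1--3 of the definition.

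\emph{Norm identity.} Let $f\in L^p(\R^d)$, more precisely an equivalence class. The composition $f\circ X_t$ is well defined a.e., because $(X_t)_\#\L^d=\L^d$: if $f=g$ outside a null set $N$, then $f\circ X_t=g\circ X_t$ outside $X_t^{-1}(N)$, and $|X_t^{-1}(N)|=|N|=0$. Measurability comes from choosing a Borel representative of $f$ and a Lebesgue measurable version of $X_t$; preimages of null sets are null, so the composition is Lebesgue measurable. The change of variables formula for image measures then gives
\begin{equation*}
\int_{\R^d}|f\circ X_t|^p\,dx=\int_{\R^d}|f|^p\,d\big((X_t)_\#\L^d\big)=\int_{\R^d}|f|^p\,dx .
\end{equation*}
This proves $f\circ X_t\in L^p(\R^d)$ with $\|f\circ X_t\|_p=\|f\|_p$. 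In particular, $f\mapsto f\circ X_t$ is a linear isometry of $L^p(\R^d)$.

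\emph{Continuity for nice $f$.} First take $g\in C^1_c(\R^d)$. Then $g$ is Lipschitz and bounded, so
\begin{equation*}
|g(X_s(x))-g(X_t(x))|\le \min\big(\Lip(g)\,|X_s(x)-X_t(x)|,\;2\|g\|_\infty\big).
\end{equation*}
Using $\min(a,b)^p\le a^{p}$, we obtain
\begin{equation*}
\|g\circ X_s-g\circ X_t\|_p\le \Lip(g)\,\|(X_s-\mathrm{id})-(X_t-\mathrm{id})\|_{L^p}.
\end{equation*}
By property 1 the right-hand side tends to $0$ as $s\to t$. So $t\mapsto g\circ X_t$ is continuous into $L^p$.

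\emph{Density step.} For general $f\in L^p$ and $\eps>0$, choose $g\in C^1_c$ with $\|f-g\|_p<\eps$; this is possible since $p<\infty$. Then
\begin{equation*}
\|f\circ X_s-f\circ X_t\|_p\le \|(f-g)\circ X_s\|_p+\|g\circ X_s-g\circ X_t\|_p+\|(g-f)\circ X_t\|_p .
\end{equation*}
By the norm identity the outer two terms are each equal to $\|f-g\|_p<\eps$, uniformly in $s,t$. The middle term tends to $0$ by the previous step, so $\limsup_{s\to t}\|f\circ X_s-f\circ X_t\|_p\le 2\eps$, which gives continuity.

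The main subtlety is well-definedness and measurability of $f\circ X_t$ for classes of functions; it is handled entirely by property 3. Property 2 is not needed for this lemma.
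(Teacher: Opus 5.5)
Your proof is correct and follows essentially the same route as the paper. Both arguments use three ingredients: the isometry $\|f\circ X_t\|_p=\|f\|_p$ coming from $(X_t)_\#\L^d=\L^d$; the Lipschitz bound $\|g\circ X_s-g\circ X_t\|_p\le \Lip(g)\,\|X_s-X_t\|_p$ for smooth compactly supported $g$, combined with property~1; and a density step in which the isometry makes the approximation uniform in $t$. The only differences are cosmetic: you obtain well-definedness and measurability of $f\circ X_t$ directly from a Borel representative, whereas the paper gets measurability from the a.e.\ convergence $f_n\circ X_t\to f\circ X_t$ (and both arguments tacitly need $p<\infty$ for the density step).
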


\begin{proof}
	First let $f\in C_c^\infty(\R^d)$ and let $t\in \R$. 
	Then $f\circ X_t$ is measurable and $\|f\circ X_t\|_p = \|f\|_p$.
	Hence $f\circ X_t \in L^p(\R^d)$.
	Furthermore,
	\begin{equation*}
		\|f\circ X_t - f\circ X_s\|_p \le \|\nabla f\|_\infty \|X_t - X_s\|_p
	\end{equation*}
	and it follows that $t\mapsto f\circ X_t$ belongs to $C(\R; L^p(\R^d))$.
	
	Now let us suppose that $f\in L^p(\R^d)$.
	Let us choose a sequence $f_n \in C_c^\infty(\R^d)$ such that $f_n \to f$ a.e. and $\|f_n - f\|_p \to 0$ as $n\to \infty$.
	Since $X_t$ preserves the Lebesgue measure, for any $t\in \R$ we have $f_n \circ X_t \to f\circ X_t$ a.e. as $n\to \infty$.
	Hence for all $t\in \R$ the function $f\circ X_t$ is measurable. 
	Furthermore, for all $t\in \R$ we have
	\begin{equation*}
		\|f\circ X_t - f_n \circ X_t\|_p = \|f - f_n\|_p,
	\end{equation*}
	hence the maps $t\mapsto f_n \circ X_t$ converge in $L^p(\R^d)$ uniformly on $\R$ to the map $t\mapsto f \circ X_t$ as $n \to \infty$.
	Hence $t\mapsto f \circ X_t$ belongs to $C(\R; L^p(\R^d))$.
\end{proof}

Let $\vi \colon \R^d \to \R^d$ be a compactly supported
vector field and suppose that $\vi \in \boldsymbol{L}^p(\R^d)$.
Then by the previous lemma (applied to each component of $\vi$) we have $[t\mapsto \vi \circ X_t] \in C(\R; L^p(\R^d; \R^d))$.
Hence the latter map is locally integrable (e.g. in the Riemann sense).

\begin{definition}\label{def-measure-preserving-flow}
	The group $\{X_t\}_{t\in \R}$ of measure-preserving transformations is called a \emph{measure-preserving flow of $\vi$}, if
	for all $t\in \R$ we have
	\begin{equation}
		X_t - \operatorname{id} = \int_0^t \v \circ X_s \, ds. \label{integral-curves}
	\end{equation}
\end{definition}

The following proposition appears to be well-known.

\begin{proposition}\label{unique-flow}
	Suppose that $p=2$.
	Let $A_0 \colon L^2(\R^d) \supset D(A_0) \to L^2(\R^d)$ be given by
	$A_0 \rho = \v \cdot \nabla \rho$ with the domain $D(A_0) = C_c^\infty(\R^d)$.
	If $A_0$ is essentially skew-adjoint, then $\v$
	cannot have two different measure-preserving flows.
\end{proposition}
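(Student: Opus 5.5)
Given a measure-preserving flow $\{X_t\}$ of $\vi$, I will associate with it the Koopman-type group $U_t f := f\circ X_{-t}$ on $L^2(\R^d)$ and show that $U_t$ is exactly the group $S_t$ generated by $-A$, where $A$ is the closure of $A_0$. Since $S_t$ does not depend on the flow, two flows $X_t, Y_t$ would then give $f\circ X_{-t} = f\circ Y_{-t}$ a.e. for every $f\in L^2$. Taking $f$ to be the coordinate functions multiplied by cutoffs equal to $1$ on large balls then yields $X_t = Y_t$ a.e.

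\emph{Step 1: $U_t$ is a unitary $C_0$-group.} By Lemma~\ref{comp-with-measure-preserving-transformations}, each $U_t$ is an isometry of $L^2$ and $t\mapsto U_t f$ is continuous. The group property $X_s\circ X_t = X_{s+t}$ gives $U_{s}U_t=U_{s+t}$ and $U_0=\mathrm{id}$ (a.e.), so $U_t$ is invertible and hence unitary.

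\emph{Step 2: $\rho(t):=U_t\rho_0$ solves \eqref{CE}.} Fix $\fhi\in C^\infty_c(\R^d)$. By measure preservation,
\begin{equation*}
\int \rho_0(X_{-t}(x))\fhi(x)\,dx = \int\rho_0(y)\,\fhi(X_t(y))\,dy .
\end{equation*}
From \eqref{integral-curves}, $t\mapsto X_t$ is $C^1$ in $L^2_{\loc}$ with derivative $\vi\circ X_t$ (indeed $t\mapsto \vi\circ X_t$ is continuous in $L^2$). Hence $\frac{d}{dt}\fhi\circ X_t = (\nabla\fhi\circ X_t)\cdot(\vi\circ X_t) = (\vi\cdot\nabla\fhi)\circ X_t$ in $L^2$. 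I need to justify this chain rule: $\fhi$ is smooth with bounded derivatives, so the difference quotients converge in $L^2$ by a mean value estimate together with continuity of $t\mapsto X_t$ in $L^2$. Then
\begin{equation*}
\frac{d}{dt}\int\rho(t)\fhi = \int \rho_0\,(A_0\fhi)\circ X_t = \int \rho(t)\,A_0\fhi ,
\end{equation*}
which is \eqref{CE-sep-var}. By Proposition~\ref{equivalent-def-of-weak-sol}, $\rho$ is a weak solution of \eqref{CE} on $(0,+\infty)$.

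\emph{Step 3: conclusion.} By Proposition~\ref{adjoint-operator-extends-generator}, the generator $G$ of $U_t$ satisfies $G\subset A^*=-A$. By Stone's theorem $G$ is skew-adjoint, so taking adjoints gives $-A = A^{*} \subset G^*=-G$, i.e.\ $G \subset -A$ and $-A\subset G$. Hence $G=-A$ and $U_t=S_t$. Alternatively, I can invoke Corollary~\ref{strongly-cont-version-is-given-by-group}, whose uniqueness statement requires $\vi$ compactly supported, as in the preceding definition.

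The main obstacle is the differentiation step in Step 2, i.e.\ making rigorous that $\fhi\circ X_t$ is differentiable in $L^2$ with the expected derivative using only \eqref{integral-curves} in integrated $L^2$ form. I would handle it by writing
\begin{equation*}
\fhi(X_{t+h})-\fhi(X_t)=\int_0^1\nabla\fhi\bigl(X_t+\theta(X_{t+h}-X_t)\bigr)\,d\theta\cdot (X_{t+h}-X_t),
\end{equation*}
substituting $X_{t+h}-X_t=\int_t^{t+h}\vi\circ X_s\,ds$, and using boundedness and uniform continuity of $\nabla\fhi$ together with dominated convergence along a.e.-convergent subsequences.
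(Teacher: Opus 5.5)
Your proof is correct, but it identifies the generator through the opposite inclusion from the paper's. The paper works with the Koopman group $S_tf=f\circ X_t$ and shows directly that its generator extends $A_0$. For $\fhi\in C^\infty_c(\R^d)$ one has $(\fhi\circ X_t-\fhi)/t\to\v\cdot\nabla\fhi$ in $L^2$, which gives $A\subset G$. Then $-G=G^*\subset A^*=-A$ forces $G=A$, and the continuity equation never enters.

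You instead prove $G\subset A^*$ for $U_tf=f\circ X_{-t}$. You check that $U_t\rho_0$ is a weak solution and invoke Proposition~\ref{adjoint-operator-extends-generator}, then close with the same maximality argument for skew-adjoint operators. The core computation, differentiating $t\mapsto\fhi\circ X_t$, is shared. You need it at every $t$, together with the change of variables via $X_t\circ X_{-t}=\mathrm{id}$. Your $L^2$-valued mean-value/convergence-in-measure argument also avoids the paper's step of extracting absolutely continuous trajectories from the $L^2$-valued integral identity for the flow.

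What each route buys: the paper's is shorter and self-contained. Yours ties the appendix to the main theme and in fact shows more. It only uses that $U_t\rho_0$ is a \emph{forward} weak solution in $L^\infty(0,T;L^2(\R^d))$ with $\rho|_0=\rho_0$. So uniqueness of the flow already follows from forward uniqueness of such solutions alone: the maps $X_{-t}$, $t\ge 0$, are then determined, and $X_t$ is their a.e.\ inverse. By Theorem~\ref{fusilli} and Remark~\ref{uroboros}, this hypothesis is strictly weaker than essential skew-adjointness.

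One slip to fix in Step 3. Taking adjoints in $G\subset A^*=-A$ gives $A=(-A)^*\subset G^*=-G$, not ``$-A=A^*\subset G^*$''; the latter would yield $A\subset G$, which combined with $G\subset -A$ would force $A=0$. Your stated conclusion $-A\subset G$, hence $G=-A$, is nonetheless correct.
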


\begin{proof}
	By Definition~\ref{def-measure-preserving-flow} we have $X_0 = \operatorname{id}$.
	By Lemma~\ref{comp-with-measure-preserving-transformations} any measure-preserving flow $X_t$ of the vector field $\v$ induces a $c_0$-group of isometries $S_t \colon L^2(\R^d) \to L^2(\R^d)$ given by
	\begin{equation}
		S_t f := f \circ X_t, \qquad f \in L^2(\R^d).
	\end{equation}
	
	%For any $t,s\in \R$ (such that $s<t$) the functions $X_t$ and $X_s$
	%in general do not belong to $L^2(\R^d)$, unlike their difference:
	%\begin{equation}
	%\label{flow-diff-estimate}
	%\|X_t - X_s\| = \left\| \int_s^t \v\circ X_\tau \, d\tau \right\|
	%\le \int_s^t \|\v \circ X_\tau\| \, d\tau = (t-s) \|\v\|.
	%\end{equation}
	%
	%It is easy to see that $S_t$ is a $c_0$-group.
	%Indeed, for any $f\in C^\infty_c(\R^d)$
	%by the mean value theorem $|f(X_t(x)) - f(X_s(x))| \le C |X_t(x) - X_s(x)|$,
	%where $C=\|\nabla f\|_\infty$.
	%Then by \eqref{flow-diff-estimate}
	%\begin{equation*}
	%\|f \circ X_t - f\circ X_s\| \le C \|X_t - X_s\| \le C (t-s) \|\v\|.
	%\end{equation*}
	%In the more general case when $f \in L^2(\R^d)$
	%one can approximate $f$ with $\widetilde{f} \in C^\infty_c(\R^d)$
	%and use the estimate above
	%together with the inequality $\|f \circ X_t - \widetilde f\circ X_t\| = \|f - \widetilde{f}\|$.
	
	Let $G$ denote the generator of the group $S_t$.
	We claim that
	\begin{equation}
		\label{universal-subset-of-domain-of-generator}
		A_0 \subset G.
%		C_c^\infty(\R^d) \subset D(G).
	\end{equation}
	It suffices to prove that
	for any $\fhi \in C_c^\infty(\R^d)$ we have
	\begin{equation}
		\label{derivative-of-St}
		\frac{\fhi \circ X_t - \fhi}{t} \to \v \cdot \nabla \fhi
	\end{equation}
	in $L^2(\R^d)$ as $t\to 0$.
	
	By \eqref{integral-curves} for a.e. $x\in \R^d$ the map $t\mapsto X_t(x)$ is absolutely continuous with 
	\[\d_t(X_t(x)) = \v(X_t(x))\]
	for a.e. $t$.
	Then $t\mapsto \fhi \circ X_t$ is absolutely continuous with $\d_t \fhi \circ X_t = (\v \cdot \nabla \fhi) \circ X_t$ for a.e. $t$.
	Hence $\fhi \circ X_t - \fhi = \int_0^t (\v \cdot \nabla \fhi) \circ X_s \, ds$. Since $S_t$ is a $c_0$-group, for any $\eps > 0$
	there exists $\delta > 0$ such that for all $s$ with $|s|< \delta$
	we have $\|(\v \cdot \nabla \fhi) \circ X_s - \v \cdot \nabla \fhi\|_p < \eps$.
	Then for all $t$ with $|t| < \delta$ we have
	\begin{equation*}
		\left\|\frac{\fhi \circ X_t - \fhi}{t} - \v \cdot \nabla \fhi\right\|_p
		\le \frac{1}{t}\int_0^t \|(\v \cdot \nabla \fhi) \circ X_s - \v \cdot \nabla \fhi\|_p \, ds
		\le \frac{1}{t}\int_0^t \eps \, ds = \eps.
	\end{equation*}
	Hence \eqref{derivative-of-St} holds, and \eqref{universal-subset-of-domain-of-generator} follows.
	
	By Stone's theorem $G$ is skew-adjoint.
	By \eqref{universal-subset-of-domain-of-generator}
	we have $A_0 \subset G$.
	Hence $A \subset G$, where $A = \overline{A_0}$ is the closure of $A_0$.
	Then $-G = G^* \subset A^* = - A$, since $A$ is skew-adjoint.
	Thus $A \subset G$ and $G \subset A$, hence $G=A$.
	Hence the group $S_t$ is unique,
	and this implies uniqueness of the flow $X_t$.
	Indeed, if $\widetilde{X}_t$ is another such flow, then for any $f\in L^2(\R^d)$
	we have $f \circ X_t = f \circ \widetilde{X}_t$.
	Substituting $f(x) = x_i \cdot \1_{B_r(0)}(x)$ with  arbitrary $i\in \{1, ..., d\}$ and $r \in \N$ we conclude that $X_t = \widetilde{X}_t$ a.e.
\end{proof}

\bibliographystyle{alpha}
\bibliography{references}

\end{document}